\documentclass[12pt]{article}
\usepackage{srcltx}
\usepackage{eurosym}
\usepackage{mathtools}
\usepackage{amsmath}
\usepackage{amsfonts}
\usepackage{amssymb}
\usepackage{amsthm}
\usepackage{graphicx}
\usepackage{mathrsfs}
\usepackage{xcolor}
\usepackage{exscale}
\usepackage{latexsym}
\usepackage{authblk}

\usepackage[colorlinks,plainpages=true,pdfpagelabels,hypertexnames=true,colorlinks=true,pdfstartview=FitV,linkcolor=blue,citecolor=red,urlcolor=black]{hyperref}
\PassOptionsToPackage{unicode}{hyperref}
\PassOptionsToPackage{naturalnames}{hyperref}
\usepackage{enumerate}
\usepackage[shortlabels]{enumitem}
\usepackage{bookmark}
\usepackage{wasysym}
\usepackage{esint}
\usepackage[ddmmyyyy]{datetime}
\usepackage[margin=1in]{geometry}
\parskip = 0.00in
\headsep=0.10in
\makeatletter
\g@addto@macro\normalsize{%
	\setlength\abovedisplayskip{4pt}
	\setlength\belowdisplayskip{4pt}
	\setlength\abovedisplayshortskip{4pt}
	\setlength\belowdisplayshortskip{4pt}
}
\numberwithin{equation}{section}
\everymath{\displaystyle}
\usepackage[capitalize,nameinlink]{cleveref}
\crefname{section}{Section}{Sections}
\crefname{subsection}{Subsection}{Subsections}
\crefname{condition}{Condition}{Conditions}
\crefname{hypothesis}{Hypothesis}{Conditions}
\crefname{assumption}{Assumption}{Assumptions}
\crefname{lemma}{Lemma}{Lemmas}
\crefname{definition}{Definition}{Definitions}

\crefformat{equation}{\textup{#2(#1)#3}}
\crefrangeformat{equation}{\textup{#3(#1)#4--#5(#2)#6}}
\crefmultiformat{equation}{\textup{#2(#1)#3}}{ and \textup{#2(#1)#3}}
{, \textup{#2(#1)#3}}{, and \textup{#2(#1)#3}}
\crefrangemultiformat{equation}{\textup{#3(#1)#4--#5(#2)#6}}%
{ and \textup{#3(#1)#4--#5(#2)#6}}{, \textup{#3(#1)#4--#5(#2)#6}}%
{, and \textup{#3(#1)#4--#5(#2)#6}}

\Crefformat{equation}{#2Equation~\textup{(#1)}#3}
\Crefrangeformat{equation}{Equations~\textup{#3(#1)#4--#5(#2)#6}}
\Crefmultiformat{equation}{Equations~\textup{#2(#1)#3}}{ and \textup{#2(#1)#3}}
{, \textup{#2(#1)#3}}{, and \textup{#2(#1)#3}}
\Crefrangemultiformat{equation}{Equations~\textup{#3(#1)#4--#5(#2)#6}}%
{ and \textup{#3(#1)#4--#5(#2)#6}}{, \textup{#3(#1)#4--#5(#2)#6}}%
{, and \textup{#3(#1)#4--#5(#2)#6}}

\crefdefaultlabelformat{#2\textup{#1}#3}

\newtheorem{theorem} {Theorem}[section]
\newtheorem{proposition} [theorem]{Proposition}

\newtheorem{lemma}[theorem]{Lemma}
\newtheorem{corollary}[theorem]{Corollary}

\newtheorem{counter example}[theorem]{Counter Example}
\newtheorem{remark}[theorem] {Remark}






\def\CC{{\rm \kern.24em \vrule width.02em height1.4ex depth-.05ex \kern-.26emC}}

\def\TagOnRight

\def\AA{{it I} \hskip-3pt{\tt A}}

\def\QQ{\rlap {\raise 0.4ex \hbox{$\scriptscriptstyle |$}} {\hskip -0.1em Q}}


\makeatletter
\newcommand{\vo}{\vec{o}\@ifnextchar{^}{\,}{}}
\makeatother

\def\YYint#1#2#3{{\setbox0=\hbox{$#1{#2#3}{\iint}$}
		\vcenter{\hbox{$#2#3$}}\kern-.50\wd0}}


\def\XXint#1#2#3{{\setbox0=\hbox{$#1{#2#3}{\int}$}
		\vcenter{\hbox{$#2#3$}}\kern-.50\wd0}}

\makeatletter
\def\namedlabel#1#2{\begingroup
	\def\@currentlabel{#2}%
	\label{#1}\endgroup
}
\makeatother
\makeatletter
\newcommand{\rmh}[1]{\mathpalette{\raisem@th{#1}}}
\newcommand{\raisem@th}[3]{\hspace*{-1pt}\raisebox{#1}{$#2#3$}}
\makeatother


\newcounter{desccount}
\newcommand{\descitem}[2]{\item[#1]\refstepcounter{desccount}\label{#2}}
\newcommand{\descref}[2]{\hyperref[#1]{\textnormal{\textcolor{black}{}\textcolor{blue}{ #2}\textcolor{black}{}}}}

\newcommand{\dref}[2]{\hyperref[#1]{\textcolor{black}{(}\textcolor{blue}{\bf #2}\textcolor{black}{)}}}
\newcommand{\be} {\begin{eqnarray}}
	\newcommand{\ee} {\end{eqnarray}}
\newcommand{\Bea} {\begin{eqnarray*}}
	\newcommand{\Eea} {\end{eqnarray*}}
\newcommand{\pa} {\partial}

\newcommand{\al} {\alpha}
\newcommand{\rr}{\rightarrow}

\newcommand{\B} {\beta}
\newcommand{\de} {\delta}

\newcommand{\p}  {\prime}
\newcommand{\e}  {\varepsilon}

\newcommand{\la} {\lambda}
\newcommand{\si} {\sigma}

\newcommand{\f}{\infty}
\newcommand{\R}{\mathbb{R}}

\newcommand{\noi} {\noindent}




\newcommand{\ta}{\tau}







\newcommand{\norm}[1]{\left|\hspace{-0.2mm}\left| #1 \right|\hspace{-0.2mm}\right|}
\newcommand{\abs}[1]{\left| #1\right|}






\newcounter{whitney}
\refstepcounter{whitney}

\newcounter{ineqcounter}
\refstepcounter{ineqcounter}
\makeatletter
\def\ps@pprintTitle{%
	\let\@oddhead\@empty
	\let\@evenhead\@empty
	\def\@oddfoot{}%
	\let\@evenfoot\@oddfoot}
\makeatother
\usepackage[doublespacing]{setspace}
\usepackage[titletoc,toc,page]{appendix}


\makeatletter
\newcommand{\refcheckize}[1]{%
	\expandafter\let\csname @@\string#1\endcsname#1%
	\expandafter\DeclareRobustCommand\csname relax\string#1\endcsname[1]{%
		\csname @@\string#1\endcsname{##1}\wrtusdrf{##1}}%
	\expandafter\let\expandafter#1\csname relax\string#1\endcsname
}
\makeatother

\refcheckize{\cref}
\refcheckize{\Cref}

\makeatletter
\makeatother

%
\makeatletter
\newcommand{\mainsectionstyle}{%
	\renewcommand{\@secnumfont}{\bfseries}
	\renewcommand\section{\@startsection{section}{2}%
		\z@{.5\linespacing\@plus.7\linespacing}{-.5em}%
		{\normalfont\bfseries}}%
}
\makeatother
\usepackage{pgf,tikz}
\usetikzlibrary{arrows}
\usetikzlibrary{decorations.pathreplacing}

\usepackage{xpatch}
\xpatchcmd{\MaketitleBox}{\hrule}{}{}{}
\xpatchcmd{\MaketitleBox}{\hrule}{}{}{}

\linespread{1}
\date{}


\usepackage{scalerel}

\makeatletter

\title{Vanishing viscosity limit for hyperbolic system of Temple class in 1-d with nonlinear viscosity}

\author[1,a]{Boris Haspot}
\author[2,a]{Animesh Jana}

\affil[a]{\footnotesize	 Universit\'e Paris Dauphine, PSL Research University, CEREMADE (UMR CNRS 7534), Place du Mar\' echal De Lattre De Tassigny 75775 Paris cedex 16 (France).}

\affil[1]{\em \footnotesize	 haspot@ceremade.dauphine.fr}
\affil[2]{\em \footnotesize	 animesh.jana@dauphine.psl.eu}

\allowdisplaybreaks

\begin{document}
	\maketitle
	\begin{abstract}
		We consider hyperbolic system with nonlinear viscosity such that the viscosity matrix $B(u)$ is commutating with $A(u)$ the matrix associated to the convective term. The drift matrix is assumed to be Temple class.  First we prove the global existence of smooth solutions for initial data with small total variation. We show that the solution to the parabolic equation converges to a semi-group solution of the hyperbolic system as viscosity goes to zero. Furthermore, we prove that the zero diffusion limit coincides with the one obtained in [Bianchini and Bressan, Indiana Univ. Math. J. 2000].
	\end{abstract}
	\section{Introduction}
	This article concerns about vanishing viscosity limit for hyperbolic system of conservation laws. We consider the following parabolic approximation of the hyperbolic system 
	
	\begin{align}
		u_t+A(u)u_x&=\varepsilon(B(u)u_x)_x \quad \quad\,\mbox{ for }t>0,x\in\R,\label{eqn-parabolic}\\
		u(0,x)&=u_0(x)\quad \quad\quad \quad\quad\mbox{for }x\in\R,\label{eqn:intial-data}
	\end{align}
	
where $u:[0,\f)\times\R\rr\R^n$ and $A,B$ are $n\times n$ matrices satisfying the following conditions for some $\mathcal{U}\subset\R^n$.
\begin{description}
	\descitem{($\mathcal{H}_A1$.)}{A1} The matrix $A(u)$ is $C^3$ function and has $n$ distinct eigenvalues $\la_1(u)<\cdots<\la_n(u)$ for $u\in\mathcal{U}$. 
	\descitem{($\mathcal{H}_B$.)}{B} The matrix $B(u)$ is a $C^2$ function and positive symmetric definite with $B(u)\geq c_0\mathbb{I}_n$ for $u\in\mathcal{U}$ and for some $c_0>0$. 

\end{description}
In this article, we prove the global existence of solutions to \eqref{eqn-parabolic} and we consider the vanishing viscosity limit as $\e\rr0$ for initial data having small total variation. We show that the limit converges to semigroup solution of the following partial differential equation,
\begin{equation}\label{eqn:hyperbolic}
	u_t+A(u)u_x=0.
\end{equation}
When $A=Df$ for some $C^1$ function $f:\Omega\rr\R^n$ and $\Omega\subset\R^n$, we may write \eqref{eqn:hyperbolic} in divergence form
\begin{equation}\label{eqn:conlaw}
	u_t+(f(u))_x=0.
\end{equation}
For strictly hyperbolic system of conservation laws \eqref{eqn:conlaw}, the existence of global BV solutions of hyperbolic system of conservation laws has been proved in \cite{Glimm} for initial data with small total variation. The uniqueness of entropy solutions has been established in \cite{Bressan,BrCrPi,BrLiuYa}. It was an open question to show that these entropy solutions can be obtained as a viscosity limit. For $n\times n$ hyperbolic system of conservation laws this question has been sloved in \cite{BiB-vv-lim} when $B(u)=\mathbb{I}_n$. We also refer to \cite{BiB-triangular,BiB-DCDS} for relevant results in this direction. 

This motivates us to study the vanishing viscosity limit for general viscosity matrix $B$ which may not be $\mathbb{I}_n$. It has been mentioned \cite{BiB-vv-lim,Bressan-notes} as an open problem to study the viscosity limit for general $B$ since there are many physical systems for which the viscosity matrix $B$ is quasilinear. Despite of physical importance, there are only a few results considering a non identity viscosity matrix. In \cite{BiSpinolo-arma} the boundary Riemann problem has been studied for system of conservation laws via vanishing viscosity process even with non-invertible or degenerate $B$ (see 
also \cite{BiSpinolo-JDE,BiSpinolo-ReIsMaUnTr} for further discussions in this direction). We also refer to \cite{Christoforou} where the viscosity limit has been studied for balance laws with dissipative forcing. 

The directional derivative of a function $g:\R^n\rr\R$ is denoted by $\zeta\bullet g$ for some $\zeta\in \R^n$. More precisely, 
\begin{equation*}
	\zeta\bullet g(u)=\lim\limits_{z\rr0}\frac{g(u+z\zeta)-g(u)}{z}.
\end{equation*} In this article, we study the vanishing viscosity limit for a larger class of matrix $B$ which is satisfying \descref{B}{($\mathcal{H}_B$.)} and which is commutating with $A(u)$. In addition the drift matrix $A$ satisfies the following assumption
\begin{description}
	\descitem{($\mathcal{H}_A2$.)}{A2} Each characteristic field of $A$ belongs to the Temple class, that is, the characteristic fields are straight lines and satisfying 
	\begin{equation}
		r_i\bullet r_i=0\mbox{ for all }i=1,2,\cdots,n,
	\end{equation} 
where $\{r_i\}_{i=1}^{n}$ are right eigenvectors fo $A(u)$.
\end{description}
Note that \descref{A2}{($\mathcal{H}_A2$.)} is equivalent to the property that shock and rarefaction curves coincide (see \cite[Theorem 2]{Temple}). For Temple class system of conservation laws \eqref{eqn:conlaw} with $A=Df$ satisfying \descref{A2}{($\mathcal{H}_A2$.)}, the existence and uniqueness is known for initial data with large total variation \cite{Serre-1,Bai-Bre}. In \cite{Serre-1} Serre has studied the vanishing viscosity limit in the case of a $2\times2$ Temple system for small $BV$ initial data with $B=\mathbb{I}_n$. In \cite{BiB-temple-class} Bianchini and Bressan have proved  the global existence of smooth solutions to \eqref{eqn-parabolic} for Temple $n\times n$ system and  small initial data in $BV$ when $B=\mathbb{I}_n$.

Rest of the paper is organized as follows. In the next subsection we state our main result which deals with  the existence of global solution and uniform BV estimate for \eqref{eqn-parabolic}. In section \ref{sec:uniqueness}, we prove properties of the semigroup solution arising as a limit in Theorem \ref{theorem-1}. We give an out line of the proofs and set up the basics in section \ref{sec:Outline}. We prove BV estimates, $L^1$ stability and the finite speed of propagation in sections \ref{sec:BV}, \ref{sec:stability} and \ref{sec:propagation} respectively. 

%
%
%

   \subsection{Main result}
   We state our first main result which concerns about global existence of smooth solutions to \eqref{eqn-parabolic} for small BV initial data.
   \begin{theorem}\label{theorem-1}
   	Consider the Cauchy problem hyperbolic system with viscosity,
   	\begin{equation}\label{eqn-thm-1}
   		u_t+A(u)u_x=\e(B(u)u_x)_x,\quad u(0,x)=\bar{u}(x).
   	\end{equation}
   We assume that the drift $A$ satisfies \descref{A1}{($\mathcal{H}_A1$.)}, \descref{A2}{($\mathcal{H}_A2$.)} and viscosity matrix $B$ verifies \descref{B}{($\mathcal{H}_B$.)}. Furthermore, we assume that
   \begin{equation}
   	A(u)B(u)=B(u)A(u)\mbox{ for all }u\in \mathcal{U}.
   \end{equation} There exists $L_1,L_2,L_3>0$ and $\de_0>0$ such that the following holds. If $\bar{u}$ satisfies
   \begin{equation}\label{condition-data-thm-1}
   	TV(\bar{u})\leq\de_0\mbox{ and }\lim\limits_{x\rr-\f}\bar{u}(x)\in K,
   \end{equation}
for some compact set $K\subset\mathcal{U}$ then there exists unique solution $u^\e$ to the Cauchy problem \eqref{eqn-thm-1} and it satisfies the following properties
\begin{align}
	TV(u^\e(t))&\leq L_1TV(\bar{u}),\label{thm-1:BV-bound}\\
		\norm{u^\e(t)-v^\e(t)}_{L^1}&\leq L_2\norm{\bar{u}-\bar{v}}_{L^1},\label{thm1-Lipschitz}\\
	\norm{u^\e(t)-u^\e(s)}_{L^1}&\leq L_3\left(\abs{t-s}+\sqrt{\e}\abs{\sqrt{t}-\sqrt{s}}\right),\label{L1-cont}
\end{align} 
where $v^\e$ is the unique solution corresponding to $\bar{v}$ satisfying \eqref{condition-data-thm-1}. 

Furthermore, when $A=Df$ for some $f\in C^1$, as $\e\rr0$ (up to a subsequence), $u^\e(t)\rr u(t,x)$ a solution to hyperbolic system \eqref{eqn:hyperbolic}. 
   \end{theorem}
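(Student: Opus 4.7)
\medskip

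\noindent\textbf{Proof proposal for Theorem~\ref{theorem-1}.}

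The plan is to pass to a parabolic system in characteristic components, exploit the commutativity hypothesis $A(u)B(u)=B(u)A(u)$ to work in coordinates in which $B$ is simultaneously diagonalized with $A$, and then adapt the framework of \cite{BiB-vv-lim,BiB-temple-class} with the extra structural simplifications coming from the Temple assumption \descref{A2}{($\mathcal{H}_A2$.)}. The first step is a reduction: rescaling $(t,x)\mapsto(t/\e,x/\e)$ reduces \eqref{eqn-thm-1} to the case $\e=1$, provided all estimates are formulated in translation/scaling invariant norms. Standard parabolic theory gives local-in-time existence of a smooth solution for initial data in $L^\infty\cap BV$; the whole proof then reduces to establishing an a priori bound of the form $TV(u(t,\cdot))\le L_1\,TV(\bar u)$ independent of time, which will allow the local solution to be continued globally and to extract a limit as the rescaling is undone.

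For the BV estimate, I would decompose the spatial gradient along the common eigenvectors of $A$ and $B$: writing $u_x=\sum_i v_i\,r_i(u)$ where $A r_i=\lambda_i r_i$ and $B r_i=\mu_i r_i$ (with $\mu_i\ge c_0>0$ by \descref{B}{($\mathcal{H}_B$.)}), a direct computation using $r_i\bullet r_i=0$ yields, for each $i$,
\begin{equation*}
(v_i)_t+\bigl(\lambda_i(u)\,v_i\bigr)_x=\bigl(\mu_i(u)\,(v_i)_x\bigr)_x+\sum_{j\neq k}\Phi^i_{jk}(u)\,v_j v_k,
\end{equation*}
where the source terms $\Phi^i_{jk}$ only involve directional derivatives of $r_i,\lambda_i,\mu_i$ along $r_j$ with $j\neq i$ (the Temple relation kills the dangerous diagonal terms $v_i^2$). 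Multiplying by $\operatorname{sgn}(v_i)$ and integrating in $x$ gives, thanks to the non-negativity of $\mu_i$ in the viscous contribution, a differential inequality of the form $\tfrac{d}{dt}\|v_i\|_{L^1}\le C\sum_{j\neq k}\!\int|v_j v_k|\,dx$. The interaction integrals are controlled by a Glimm-type interaction functional
\begin{equation*}
Q(u)=\sum_{j<k}\iint_{x<y}|v_j(x)|\,|v_k(y)|\,dx\,dy,
\end{equation*}
which is decreasing (up to a constant times the total variation) along smooth solutions for Temple systems; the parabolic term produces a negative Dirichlet-type contribution that dominates the cross products once $TV(\bar u)$ is small enough. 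Combining these, the functional $\sum_i\|v_i\|_{L^1}+\kappa Q$ is non-increasing, yielding \eqref{thm-1:BV-bound}. This is the main obstacle: verifying rigorously that the non-identity $B$ does not destroy the cancellation structure used for $B=\mathbb{I}_n$, and for this the commutativity assumption is essential because it keeps the decomposition into characteristic components block-diagonal.

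For the $L^1$ stability \eqref{thm1-Lipschitz}, I would follow the Bressan--Liu--Yang strategy: given two solutions $u^\e,v^\e$, decompose $u^\e-v^\e$ along the eigenvectors of $A$ (at a suitable averaged state), derive a coupled viscous transport system for the components, and construct a weighted $L^1$ functional whose weights compensate for the cross interactions in exactly the same way as in the homogeneous case, again using $r_i\bullet r_i=0$ to control the diagonal contributions. The time continuity \eqref{L1-cont} is more classical: the purely parabolic piece is handled by $\|u(t)-u(s)\|_{L^1}\le C\sqrt{\e}\,|\sqrt t-\sqrt s|\,TV(\bar u)$ via heat kernel bounds on the equation for $(u-u_0)$, while the transport contribution produces the linear-in-$|t-s|$ term with constant proportional to $\sup\|A(u)\|\cdot TV(\bar u)$.

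Finally, for the vanishing viscosity limit when $A=Df$, the BV bound \eqref{thm-1:BV-bound} uniform in $\e$ combined with the time modulus \eqref{L1-cont} allows Helly's compactness theorem to extract a subsequence converging in $L^1_{\loc}$ to a limit $u\in L^\infty_t(BV_x)$. The convergence $\e (B(u^\e)u^\e_x)_x\to 0$ in distribution is automatic from the uniform BV bound and the boundedness of $B$, and since $A=Df$ we may rewrite the convective part in divergence form and pass to the limit in $f(u^\e)_x\to f(u)_x$ using strong $L^1_{\loc}$ convergence and continuity of $f$. The limit $u$ is therefore a distributional (in fact semigroup) solution of \eqref{eqn:hyperbolic}, and the Lipschitz estimate \eqref{thm1-Lipschitz} passes to the limit to guarantee uniqueness of the semigroup, which identifies $u$ with the Bianchini--Bressan semigroup, completing the proof.
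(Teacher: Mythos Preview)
Your overall outline --- rescale to $\varepsilon=1$, decompose $u_x$ along the common eigenbasis of $A$ and $B$, exploit $r_i\bullet r_i=0$ to kill diagonal sources, control interactions, pass to the limit by Helly --- matches the paper. But two of your central steps are oversimplified in ways that leave real gaps.

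First, the equation you write for $v_i$ is not what the computation actually gives. The paper's derivation shows that
\[
v_{i,t}+(\lambda_i v_i)_x-(\mu_i v_i)_{xx}=\phi_i,\qquad
\phi_i=\sum_{j,k}p^i_{jk}v_jv_k+\sum_{j,k}q^i_{jk}v_{j,x}v_k+\sum_{j,k,l}s^i_{jkl}v_jv_kv_l,
\]
with $p^i_{kk}=q^i_{kk}=s^i_{kkk}=0$. The Temple relation kills only the pure diagonal terms; the \emph{derivative} couplings $v_{j,x}v_k$ for $j\neq k$ survive (they come from $B$ varying with $u$, e.g.\ the terms $\mu_i u^i_{xx}u^j_x\,r_j\bullet r_i$). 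Your differential inequality $\tfrac{d}{dt}\|v_i\|_{L^1}\le C\sum_{j\ne k}\int|v_jv_k|$ therefore misses a $\sum_{j\ne k}\int|v_{j,x}||v_k|$ contribution that cannot be absorbed by a Glimm potential alone. To close this, the paper first proves a short-time parabolic estimate $\|u_{xx}(t)\|_{L^1}\le C\delta_0/\sqrt{t}$ (by a nontrivial change of variables $x\mapsto X_i(x)$ with $X_{i,x}=\mu_i^{-1/2}$ that turns the variable-coefficient diffusion into a constant one), and then a separate transversal interaction lemma bounding $\int_0^T\!\!\int|z_x||z^\#|\,dx\,dt=\mathcal{O}(1)\delta_0^2$. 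Neither piece is present in your sketch.

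Second, your claim that $Q(u)=\sum_{j<k}\iint_{x<y}|v_j(x)||v_k(y)|\,dx\,dy$ is decreasing along smooth viscous solutions, with ``the parabolic term producing a negative Dirichlet-type contribution'', is not how the argument runs and is not obviously true for variable $\mu_i$. The paper does not show monotonicity of $Q$; instead it builds a potential with a smoothed kernel $K(s)=c^{-1}$ for $s\ge0$ and $K(s)=c^{-1}e^{cs/2c_1}$ for $s<0$, chosen so that $cK'-2c_1K''\ge\delta_0$ in the sense of distributions. This yields directly the space-time bound $\int_0^T\!\!\int|z||z^\#|\,dx\,dt\le c^{-1}E_1E_2$, and the decay comes from the \emph{speed gap} $\lambda^\#-\lambda\ge c>0$, not from a Dirichlet form. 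The BV bound is then obtained by a contradiction argument on the first time the cumulative source $\sum_i\int_{\hat t}^\tau\!\!\int|\phi_i|$ reaches $\delta_0/2$.

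For the $L^1$ stability you propose a Bressan--Liu--Yang weighted functional on $u^\varepsilon-v^\varepsilon$; the paper instead uses the homotopy $u^\theta$ with data $\theta\bar u+(1-\theta)\bar v$, derives the linearized equation for $h=\partial_\theta u^\theta$, decomposes $h=\sum_i h_i r_i$, and repeats the same transversal-interaction machinery on the $h_i$. Both philosophies can succeed, but your version would still have to confront the same derivative-coupling terms in the source, so the missing interaction lemma reappears there too. The vanishing-viscosity paragraph is fine and matches the paper.
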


	Since $A$ is assumed to be strictly hyperbolic, it can be shown that $A$ and $B$ share same set of eigenvectors. Therefore, we can assume that there exists $\{r_i\}$ and $\{l_i\}$ set of right and left eigenvectors of $A$ and $B$ both, that is, $Ar_i=\la_ir_i,\, l_iA=\la_il_i$ and $Br_i=\mu_ir_i,\, l_iB=\mu_il_i$ for $i\leq i\leq n$. Due to this condition, the function $u$ satisfying $u_x=a(\cdot)r_i(u)$ can be a viscous travelling wave for appropriate scalar function $a$ (see Remark \ref{remark:travelling-wave}). Hence we decompose $u_x$ in the basis $\{r_i\}_{i=1}^{n}$. 

Now, from the uniform BV bounds \eqref{thm-1:BV-bound} and the $L^1$ continuity, we can pass to the limit as $\e\rr0$ and the limit solution $u(t)$ forms a Lipschitz semigroup $S_t(\bar{u})$ due to \eqref{thm1-Lipschitz}. In the following section we show that the limit is independent of the subsequence and hence the semigroup is well-defined.

\subsection{Uniqueness of semi-group}\label{sec:uniqueness}
We first recall the Riemann solver for equation of hyperbolic systems (as in \cite{BiB-temple-class})
\begin{equation}\label{eqn:CP-Rie-non}
	u_t+A(u)u_x=0,\mbox{ with }u(0,x)=\left\{\begin{array}{rl}
		u_l&\mbox{ for }x<0,\\
		u_r&\mbox{ for }x>0,
	\end{array}\right.
\end{equation}
with $\abs{u_l-u_r}$ is small enough. We consider the $i$-th rarefaction curve $\si\mapsto \mathcal{R}_{i}(\si;u_-)$ starting from $u_-\in\Omega$ which satisfies
\begin{equation}
	\frac{d}{d\si}\mathcal{R}_i(\si;u_-)=r_i(\mathcal{R}_i(\si;u_-))\mbox{ with }\mathcal{R}_{i}(0;u_-)=u_-.
\end{equation}
By using implicit function theorem and with the help of strict hyperbolicity, there exist $\bar{\la}_1<\cdots<\bar{\la}_{n-1}$, $\{\si_i\}_{i=1}^{n}$ and $\{w_i\}_{i=0}^{n}$ such that
\begin{equation}
	w_0=u_l,\,w_n=u_r\mbox{ and }w_{i}=\mathcal{R}_i(\si;w_{i-1})\mbox{ for }i=1,2,\cdots,n.
\end{equation}
Moreover, $\la_i(\mathcal{R}_i(\theta\si_i;w_{i-1}))\in(\bar{\la}_{i-1},\bar{\la}_i)$ for $\theta\in[0,1]$ and $1\leq i\leq n$ where $\bar{\la}_0:=-\f$ and $\bar{\la}_n:=+\f$. Let us consider scalar flux $F_i$ corresponding to $i$-characteristics defined as follows
\begin{equation}\label{def:F-i}
	F_i(\omega):=\int\limits_{0}^{\omega}\la_i(\mathcal{R}_i(s;w_{i-1}))\,ds.
\end{equation}
Let $z_i$ be the unique entropy solution the following Cauchy problem for scalar conservation laws,
\begin{align}
	z_{i,t}+F_i(z_i)_x&=0,\label{eqn:z_i}\\
	z_i(0,x)&=\left\{\begin{array}{rl}
		0&\mbox{ if }x<0,\\
		\si_i&\mbox{ if }x>0.
	\end{array}\right.
\end{align}
Now, we can describe the solution to \eqref{eqn:CP-Rie-non} as follows,
\begin{equation}\label{soln:Rie}
	u(t,x)=\mathcal{R}_i(z_i(t,x);w_{i-1})\mbox{ for }\frac{x}{t}\in [\bar{\la}_{i-1},\bar{\la}_i]\mbox{ for all }i=1,\cdots,n.
\end{equation}
%

\begin{theorem}\label{theorem-2}
	Let $A$ and $B$ be as in Theorem \ref{theorem-1}. Then for every compact set $K\subset\mathcal{U}$ there exist $L_1,L_2,\de_0$, a closed domain $\mathcal{D}\subset L^1_{loc}(\R)$ and a semigroup $S:[0,\f)\times\mathcal{D}\rr\mathcal{D}$ satisfying the following properties. 
	\begin{description}
		\descitem{($\mathcal{S}1$.)}{S1}  Every function $\bar{u}$ verifying \eqref{condition-data-thm-1} belongs to $\mathcal{D}$. 
		\descitem{($\mathcal{S}2$.)}{S2} For any $\bar{u},\bar{v}\in\mathcal{D}$ with $\bar{u}-\bar{v}\in L^1$,
		\begin{equation}
			\norm{S_{t_1}(\bar{u})-S_{t_2}(\bar{v})}_{L^1}\leq L_1\norm{\bar{u}-\bar{v}}_{L^1}+L_2\abs{t_1-t_2}\mbox{ for any }t_1,t_2\geq0.
		\end{equation}
     	\descitem{($\mathcal{S}3$.)}{S3} For any piece-wise constant initial data $\bar{u}\in\mathcal{D}$ there exists $\tau>0$ such that the following holds. For $t\in [0,\tau]$, $S_t$ coincides with the solution constructed by gluing the Riemann problem solutions (defined by \eqref{soln:Rie}) arising at each jump point. 
			\descitem{($\mathcal{S}4$.)}{S4} For each $\bar{u}\in\mathcal{D}$, $t\mapsto S_t(\bar{u})$ is the unique limit of the sequence $u^{\e_k}(t,\cdot)$ in $L^1_{loc}$ as $k\rr\f$ for any $\e_k\rr0$ where $u^{\e_k}(t,\cdot)$ solves \eqref{eqn-parabolic} with initial data $\bar{u}$.
	\end{description}

\end{theorem}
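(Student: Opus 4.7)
The plan is to construct $S_t$ as the vanishing viscosity limit coming from Theorem~\ref{theorem-1}, verify \descref{S1}{($\mathcal{S}1$.)}--\descref{S3}{($\mathcal{S}3$.)} directly from the uniform estimates already in hand, and then invoke a Bressan-type uniqueness theorem to obtain \descref{S4}{($\mathcal{S}4$.)}; that same uniqueness will automatically show that $S_t(\bar u)$ does not depend on the extracted subsequence. Concretely, I would fix $\de_0$ as in Theorem~\ref{theorem-1}, let $\mathcal{D}$ be the closure in $L^1_{\loc}(\R)$ of the set of data $\bar u$ obeying \eqref{condition-data-thm-1}, and, for such $\bar u$, exploit the uniform bound \eqref{thm-1:BV-bound} together with the $L^1$ time-modulus \eqref{L1-cont}: Helly's theorem in $x$ combined with an Ascoli argument in $t$ furnishes a subsequence $\e_k\to 0$ and a limit $u\in C([0,\f);L^1_{\loc}(\R))$. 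I would then define $S_t(\bar u):=u(t,\cdot)$. Properties \descref{S1}{($\mathcal{S}1$.)} and \descref{S2}{($\mathcal{S}2$.)} are essentially free: the first is built into the definition of $\mathcal{D}$, the spatial bound in \descref{S2}{($\mathcal{S}2$.)} is obtained by passing \eqref{thm1-Lipschitz} to the limit, and the time bound follows from \eqref{L1-cont} once the $\sqrt{\e}$ term drops out. The semigroup identity $S_{t+s}=S_t\circ S_s$ comes from the time-translation invariance of \eqref{eqn-parabolic} together with the uniqueness part of Theorem~\ref{theorem-1}.

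The technical heart is \descref{S3}{($\mathcal{S}3$.)}. Fix piecewise constant $\bar u$ with jumps at $x_1<\cdots<x_N$ and choose $\tau>0$ so small that the characteristic cones emanating from different $x_j$ do not interact on $[0,\tau]$; the finite-speed-of-propagation result of Section~\ref{sec:propagation} makes this explicit. Near a single jump $(u_-,u_+)$ the commutativity $A(u)B(u)=B(u)A(u)$ together with the Temple hypothesis \descref{A2}{($\mathcal{H}_A2$.)} is decisive: the common eigenframe $\{r_i\}$ and the straightness of the integral curves $\mathcal{R}_i$ allow one to seek the viscous profile in the ansatz $u^\e(t,x)=\mathcal{R}_i(z^\e_i(t,x);w_{i-1})$ within the $i$-th sector, reducing the vectorial equation to the scalar viscous conservation law
\begin{equation*}
\pa_t z^\e_i+\pa_x F_i(z^\e_i)=\e\,\pa_x\!\lbr\mu_i(z^\e_i)\,\pa_x z^\e_i\rbr,
\end{equation*}
with $F_i$ given by \eqref{def:F-i}. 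The classical Kru\v{z}kov scalar vanishing-viscosity theory then supplies $z^\e_i\to z_i$ in $L^1_{\loc}$, where $z_i$ solves \eqref{eqn:z_i}, and the parametrisation \eqref{soln:Rie} identifies $\lim u^\e$ with the gluing of the $n$ Riemann solutions.

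Finally, for \descref{S4}{($\mathcal{S}4$.)} I would invoke the uniqueness of a Standard Riemann Semigroup in the sense of Bressan~\cite{Bressan}: any two Lipschitz semigroups on $\mathcal{D}$ that agree on piecewise constant data for short times must coincide. Since \descref{S3}{($\mathcal{S}3$.)} pins down $S_t$ uniquely on the dense subset of piecewise constant data and \descref{S2}{($\mathcal{S}2$.)} provides the $L^1$-Lipschitz extension to $\mathcal{D}$, every subsequential limit of $\{u^{\e_k}\}$ must coincide with $S_t$, so the entire family $\{u^\e\}$ converges as $\e\to 0$. The main obstacle I anticipate is the rigorous reduction-to-scalar step in \descref{S3}{($\mathcal{S}3$.)}: one has to verify that the ansatz $u^\e=\mathcal{R}_i(z^\e_i;w_{i-1})$ is compatible with the full viscous system up to errors controllable by the BV interaction functional introduced in Section~\ref{sec:BV}, since a generic Riemann fan is a superposition of $n$ simple waves whose mutual interaction near the jump must be bounded uniformly in $\e$.
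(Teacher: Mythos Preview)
Your overall architecture matches the paper's: Helly plus the $L^1$ time-modulus to extract a limit, \descref{S1}{($\mathcal{S}1$.)} and \descref{S2}{($\mathcal{S}2$.)} by passing the uniform estimates to the limit, reduction to a scalar viscous conservation law via the Temple structure and the common eigenframe, Kru\v{z}kov for that scalar law, and finally a Bressan-type characterization to force uniqueness of the subsequential limit.

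The one genuine discrepancy is precisely the point you flag as the ``main obstacle'' in \descref{S3}{($\mathcal{S}3$.)}. You propose to put the viscous solution to a single Riemann problem $(u_-,u_+)$ in the form $u^\e=\mathcal{R}_i(z^\e_i;w_{i-1})$ \emph{within the $i$-th sector} and then control the defect by interaction estimates. The paper does not attempt this. Instead it first treats only the case where $u_-$ and $u_+$ lie on a \emph{single} $i$-rarefaction curve; there the ansatz $u^\e=u^*+z^\e r_i(u^*)$ is \emph{exact} globally, not just sectorwise, and the scalar reduction is rigorous with no error term. For a general Riemann pair it then replaces the single jump at $0$ by the ``spread'' data
\[
\bar u^\de(x)=w_i\quad\text{for }i\de<x<(i+1)\de,\qquad 1\leq i\leq n-1,
\]
so that each of the $n$ sub-jumps connects $w_{i-1}$ to $w_i$ along one rarefaction curve only. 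The finite-speed bound of Section~\ref{sec:propagation} together with the $L^1_{\loc}$ stability (Lemma~\ref{lemma:fsp}) localizes the viscous solution near each sub-jump, where the exact scalar reduction applies; one then lets $\de\to 0$ using the $L^1$-Lipschitz dependence on data. This sidesteps entirely the need to bound interaction errors in the ansatz, which is the step you were worried about. Your route would require new estimates the paper does not develop; the paper's $\de$-separation trick is both simpler and sufficient.
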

\begin{remark}
	\begin{enumerate}
	\item We would like to emphasize that the functions contained in $\mathcal{D}$ have small total variation.
	\item When the system is conservative which corresponds to \eqref{eqn:conlaw}, $t\mapsto S_t(\bar{u})$ is the unique weak limit of the system \eqref{eqn:conlaw} in the class of uniqueness described in \cite{Bai-Bre}.
		\item From Theorem \ref{theorem-2}, we can say that the sequence of viscosity solutions $\{u^\e\}$ has a unique limit as $\e\rr0$. We denote $S^B_t(\bar{u})=\lim\limits_{\e\rr0} u^\e(t)$.
		\item Let $S^I_t$ be the semigroup constructed in \cite{BiB-temple-class} for \eqref{eqn:hyperbolic}. Due to the characterization \descref{S1}{($\mathcal{S}1$.)} to \descref{S3}{($\mathcal{S}3$.)} which corresponds to a class of uniqueness of the semi group (see \cite{Bressanbook}), we conclude that the semigroup $S^B_t$ constructed as above coincides with $S^I_t$.
	\end{enumerate}
\end{remark}

	\section{Outline of proof}\label{sec:Outline}
	Our first goal is to establish an existence of global solution $u^\e$ to \eqref{eqn-parabolic} corresponding to an initial data $\bar{u}$. Following \cite{BiB-vv-lim,BiB-temple-class}, we consider the rescaling $t\mapsto t/\e, x\mapsto x/\e$ and the Cauchy problem \eqref{eqn-parabolic}--\eqref{eqn:intial-data} becomes
	\begin{align}
		u_t+A(u)u_x&=(B(u)u_x)_x \quad \quad\mbox{ for }t>0,x\in\R,\label{eqn-main}\\
		u(0,x)&=\bar{u}_0(\e x)\quad \quad\mbox{ for }x\in\R,
	\end{align}
	We denote by $\la_1<\cdots<\la_n$ eigenvalues of $A(u)$. Let us consider $l_1,\cdots,l_n$, $r_1,\cdots,r_n$ as left and right eigenvectors of $A(u)$ such that
	\begin{equation}
		\abs{r_i(u)}=1\mbox{ and }l_i(u)\cdot r_j(u)=\left\{\begin{array}{rl}
			1&\mbox{ if }i=j,\\
			0&\mbox{ if }i\neq j.
		\end{array}\right.
	\end{equation} Following \cite{BiB-temple-class}, we set $u^i_x:=l_i(u)\cdot u_x$ and we have
  \begin{equation}\label{decomp-u_x}
  	u_x=\sum\limits_{i}u^i_xr_i(u).
  \end{equation}
Due to our assumption $\{r_i(u)\}_{i=1}^{n}$ are right eigenvector of $B(u)$ with eigenvalues $\{\mu_i(u)\}_{i=1}^{n}$ respectively. 

\begin{remark}\label{remark:travelling-wave}
	We would like to mention that for general $n\times n$ system Bianchini and Bressan \cite{BiB-vv-lim} used the gradient decomposition in a basis of travelling waves. It can be observed that for a Temple system,  $u(t,x)=U(x-\si t)$ with $u_x(t,x)=a(s)r_i(U(s))$ and $s=x-\sigma t$ forms a travelling wave when $U, a, \si_i$ satisfying the following system of ODE, 
	$$
	\begin{cases}
	\begin{aligned}
		&U^\p(s)=a(s)r_i(U(s)),\\
		 &a^\p(s)=\frac{1}{\mu_i(U(s))}\left[(\la_i(U(s))-\si_i)a(s)-r_i(U(s))\bullet\mu_i\right],\\
		 &\si^\p_i(s)=0.
	\end{aligned}
	\end{cases}
	$$
When $A$ satisfies \descref{A2}{($\mathcal{H}_A2.$)}, the decomposition \eqref{decomp-u_x} is suitable for studying the global existence of smooth solution for \eqref{eqn-parabolic} with $B(u)=\mathbb{I}_n$. On the other hand \eqref{decomp-u_x} fails to provide $L^1$ estimates in time variable $t$ for a system not satisfying \descref{A2}{($\mathcal{H}_A2.$)} (see \cite{BiB-vv-lim}). In our case it is the combination of the assumption \descref{A2}{($\mathcal{H}_A2.$)} and the fact that $A$ and $B$ are commutating which ensure that we can use the decomposition \eqref{decomp-u_x}. Indeed if $A$ and $B$ are not commutating, it is not clear that we can write the travelling wave under the form $u_x=a(s)r_i(U(s))$.
\end{remark}

 From \eqref{eqn-main}, we have
\begin{equation}
	u_t+\sum\limits_{i}\la_iu_x^ir_i=\sum\limits_{i}(u_x^iB(u)r_i)_x=\sum\limits_{i}(\mu_i u_x^ir_i)_x=\sum\limits_{i}(\mu_iu_x^i)_xr_i+\sum\limits_{i,j}\mu_iu^i_xu_x^jr_j\bullet r_i,
\end{equation}
or equivalently,
\begin{equation}\label{eqn-u-t}
		u_t=-\sum\limits_{i}\la_iu_x^ir_i+\sum\limits_{i}\mu_iu_{xx}^ir_i+\sum\limits_{i,j}u_x^iu_x^j(r_j\bullet \mu_i) r_i+\sum\limits_{i,j}\mu_iu^i_xu_x^jr_j\bullet r_i.
\end{equation}
Differentiating \eqref{eqn-u-t} w.r.t $x$ we get
\begin{align*}
	u_{tx}&=-\sum\limits_{i}(\la_iu_x^i)_xr_i-\sum\limits_{i,j}\la_iu_x^iu_x^jr_j\bullet r_i+\sum\limits_{i}(\mu_iu_{xx}^i)_xr_i+\sum\limits_{i,j}\mu_iu_{xx}^iu_x^jr_j\bullet r_i\\
	&+\sum\limits_{i,j}u_{xx}^iu_x^j(r_j\bullet \mu_i) r_i+\sum\limits_{i,j}u_x^iu_{xx}^j(r_j\bullet \mu_i) r_i+\sum\limits_{i,j,k}u_x^iu_x^ju_x^k(r_k\bullet(r_j\bullet \mu_i)) r_i\\
	&+\sum\limits_{i,j,k}u_x^iu_x^ju_x^k(r_j\bullet \mu_i) r_k\bullet r_i+\sum\limits_{i,j,k}u^i_xu_x^ju_x^k(r_k\bullet\mu_i)r_j\bullet r_i+\sum\limits_{i,j}\mu_iu^i_{xx}u_x^jr_j\bullet r_i\\
	&+\sum\limits_{i,j}\mu_iu^i_xu_{xx}^jr_j\bullet r_i+\sum\limits_{i,j,k}\mu_iu^i_xu_x^ju_x^kr_k\bullet (r_j\bullet r_i),
\end{align*}
or equivalently,
\begin{align}
	u_{tx}&=-\sum\limits_{i}(\la_iu_x^i)_xr_i-\sum\limits_{i,j}\la_iu_x^iu_x^jr_j\bullet r_i+\sum\limits_{i}(\mu_iu_{xx}^i)_xr_i+2\sum\limits_{i,j}\mu_iu_{xx}^iu_x^jr_j\bullet r_i\nonumber\\
	&+\sum\limits_{i,j}(u_{xx}^iu_x^j+u_{x}^iu_{xx}^j)(r_j\bullet \mu_i) r_i+\sum\limits_{i,j,k}u_x^iu_x^ju_x^k(r_k\bullet(r_j\bullet \mu_i)) r_i \nonumber\\
	&+2\sum\limits_{i,j,k}u_x^iu_x^ju_x^k(r_j\bullet \mu_i) r_k\bullet r_i+\sum\limits_{i,j}\mu_iu^i_xu_{xx}^jr_j\bullet r_i+\sum\limits_{i,j,k}\mu_iu^i_xu_x^ju_x^kr_k\bullet (r_j\bullet r_i).\label{eqn-u-tx-1}
\end{align}
Differentiating \eqref{decomp-u_x} w.r.t. $t$ we have
\begin{equation*}
	u_{tx}=\sum\limits_{i}u^i_{xt}r_i+\sum\limits_{i}u^i_xu_t\bullet r_i.
\end{equation*}
Using \eqref{eqn-u-t}, we obtain
\begin{align}
		u_{tx}&=\sum\limits_{i}u^i_{xt}r_i-\sum\limits_{i,j}\la_ju_x^ju^i_xr_j\bullet r_i+\sum\limits_{i}u^i_x\mu_ju_{xx}^jr_j\bullet r_i   \nonumber\\
		&+\sum\limits_{i,j,k}u^i_xu_x^ju_x^k(r_k\bullet \mu_j) r_j\bullet r_i+\sum\limits_{i,j,k}\mu_j u^i_xu^j_xu_x^k(r_k\bullet r_j)\bullet r_i. \label{eqn-u-tx-2}
\end{align}
Combining \eqref{eqn-u-tx-1} and \eqref{eqn-u-tx-2} we get
\begin{align*}
	&\sum\limits_{i}(u^i_{xt}+(\la_iu_x^i)_x-(\mu_iu_{xx}^i)_x)r_i\\
	&=-\sum\limits_{i,j}\la_iu_x^iu_x^j(r_j\bullet r_i-r_{i}\bullet r_j)+2\sum\limits_{i,j}\mu_iu_{xx}^iu_x^jr_j\bullet r_i\\
	&+\sum\limits_{i,j}(u_{xx}^iu_x^j+u_{x}^iu_{xx}^j)(r_j\bullet \mu_i) r_i+\sum\limits_{i,j,k}u_x^iu_x^ju_x^k(r_k\bullet(r_j\bullet \mu_i)) r_i\\
	&+2\sum\limits_{i,j,k}u_x^iu_x^ju_x^k(r_j\bullet \mu_i) r_k\bullet r_i+\sum\limits_{i,j}\mu_iu^i_xu_{xx}^jr_j\bullet r_i+\sum\limits_{i,j,k}\mu_iu^i_xu_x^ju_x^kr_k\bullet (r_j\bullet r_i)\\
	&-\sum\limits_{i}u^i_x\mu_ju_{xx}^jr_j\bullet r_i-\sum\limits_{i,j,k}u^i_xu_x^ju_x^k(r_k\bullet \mu_j) r_j\bullet r_i-\sum\limits_{i,j,k}\mu_j u^i_xu^j_xu_x^k(r_k\bullet r_j)\bullet r_i.
\end{align*}
or equivalently,
\begin{align*}
	&\sum\limits_{i}(u^i_{xt}+(\la_iu_x^i)_x-(\mu_iu_{xx}^i)_x)r_i-\sum\limits_{i,j}(u_{xx}^iu_x^j+u_{x}^iu_{xx}^j)(r_j\bullet \mu_i) r_i-\sum\limits_{i,j,k}u_x^iu_x^ju_x^k(r_k\bullet(r_j\bullet \mu_i)) r_i\\
	&=-\sum\limits_{i,j}\la_iu_x^iu_x^j(r_j\bullet r_i-r_{i}\bullet r_j)+2\sum\limits_{i,j}\mu_iu_{xx}^iu_x^jr_j\bullet r_i+\sum\limits_{i,j}(\mu_i-\mu_j)u^i_xu_{xx}^jr_j\bullet r_i\\
	&+2\sum\limits_{i,j,k}u_x^iu_x^ju_x^k(r_j\bullet \mu_i) r_k\bullet r_i+\sum\limits_{i,j,k}\mu_iu^i_xu_x^ju_x^k(r_k\bullet (r_j\bullet r_i)-(r_k\bullet r_i)\bullet r_j)\\
	&-\sum\limits_{i,j,k}u^i_xu_x^ju_x^k(r_k\bullet \mu_j) r_j\bullet r_i.
\end{align*}
Note that
\begin{equation*}
	\sum\limits_{i,j}(u_{xx}^iu_x^j+u_{x}^iu_{xx}^j)(r_j\bullet \mu_i) r_i+\sum\limits_{i,j,k}u_x^iu_x^ju_x^k(r_k\bullet(r_j\bullet \mu_i)) r_i=\sum\limits_{i}(u_x\bullet \mu_i u^i_x)_xr_i.
\end{equation*}
Subsequently, we have
\begin{align*}
	&\sum\limits_{i}(u^i_{xt}+(\la_iu_x^i)_x-(\mu_iu_{x}^i)_{xx})r_i\\
	&=\sum\limits_{i}(u^i_{xt}+(\la_iu_x^i)_x-(\mu_iu_{xx}^i)_x)r_i-\sum\limits_{i}(u_x\bullet \mu_i u^i_x)_xr_i\\
	&=-\sum\limits_{i,j}\la_iu_x^iu_x^j(r_j\bullet r_i-r_{i}\bullet r_j)+2\sum\limits_{i,j}\mu_iu_{xx}^iu_x^jr_j\bullet r_i\\
	&+\sum\limits_{i,j}\mu_iu^i_xu_{xx}^j(r_j\bullet r_i-r_i\bullet r_j)+2\sum\limits_{i,j,k}u_x^iu_x^ju_x^k(r_j\bullet \mu_i) r_k\bullet r_i\\
	&+\sum\limits_{i,j,k}\mu_iu^i_xu_x^ju_x^k(r_k\bullet (r_j\bullet r_i)-(r_k\bullet r_i)\bullet r_j)-\sum\limits_{i,j,k}u^i_xu_x^ju_x^k(r_k\bullet \mu_j) r_j\bullet r_i.
\end{align*}
Hence, we can write
\begin{equation*}
		\sum\limits_{i}(u^i_{xt}+(\la_iu_x^i)_x-(\mu_iu_{x}^i)_{xx})r_i=\sum\limits_{i,j}p_{ij}u_x^iu_x^j+\sum\limits_{i,j}q_{ij}u_{xx}^iu_x^j+\sum\limits_{i,j,k}s_{ijk}u_x^iu_x^ju_x^k,
\end{equation*}
where $p_{ij},q_{ij}$ and $s_{ijk}$ are defined as follows,
\begin{align*}
	p_{ij}&=-\la_i(r_j\bullet r_i-r_{i}\bullet r_j),\\
	q_{ij}&=2\mu_ir_j\bullet r_i+(\mu_i-\mu_j)r_j\bullet r_i,\\
	s_{ijk}&=2(r_j\bullet \mu_i) r_k\bullet r_i+\mu_i(r_k\bullet (r_j\bullet r_i)-(r_k\bullet r_i)\bullet r_j)-(r_k\bullet \mu_j) r_j\bullet r_i.
\end{align*}
Furthermore, we set $p_{jk}^i:=l_i\cdot p_{jk},\,q_{jk}^i:=l_i\cdot q_{jk}$ and $s^i_{jkl}:=l_i\cdot s_{jkl}$. Writing $v_i=u_x^i$, we get
\begin{align*}
	v_{i,t}+(\la_iv_i)_x-(\mu_iv_i)_{xx}&=\sum\limits_{j,k}p_{jk}^iv_jv_k+\sum\limits_{j,k}q_{jk}^iv_{j,x}v_{k}+\sum\limits_{j,k,l}s_{jkl}^iv_jv_kv_l\\
	&=:\phi_i(u,v_1,\cdots,v_n).
\end{align*}
We note that $p^i_{kk}=q^i_{kk}=s_{kkk}^i=0$ for all $i,k$ due to the assumption $r_k\bullet r_k=0$ for all $k$.

To prove the $L^1$ stability we would like to derive an equation for $h$ where $h$ is a perturbation of solution. From \eqref{eqn-main} we have
\begin{align*}
	&u_t+\e h_t+A(u+\e h)u_x+\e A(u+\e h)h_x\\
	&=B(u+\e h)u_{xx}+\e B(u+\e h)h_{xx}+u_x\bullet B(u+\e h)u_x+\e u_x\bullet B(u+\e h)h_x\\
	&+\e h_x\bullet B(u+\e h)u_x+\e^2h_x\bullet B(u+\e h)h_x.
\end{align*}
Using $u_t=-A(u)u_x+B(u)u_{xx}+u_x\bullet B(u) u_x$, we get
\begin{align*}
	&\e h_t+(A(u+\e h)-A(u))u_x+\e A(u+\e h)h_x\\
	&=(B(u+\e h)-B(u))u_{xx}+\e B(u+\e h)h_{xx}+\e u_x\bullet B(u+\e h)h_x\\
	&+u_x\bullet (B(u+\e h)-B(u))u_x+\e h_x\bullet B(u+\e h)u_x+\e^2h_x\bullet B(u+\e h)h_x.
\end{align*}
Dividing by $\e$ and then passing to the limit as $\e\rr0$ we have
\begin{align}
	& h_t+h\bullet A(u)u_x+ A(u)h_x \nonumber\\
	&=B(u)h_{xx}+h\bullet B(u)u_{xx}+(u_x\otimes h):D^2B(u)u_x   \nonumber\\
	&+u_x\bullet B(u)h_x+h_x\bullet B(u)u_x.\label{eqn:h}
\end{align}	
Next, we would like to discuss the way we are going to implement \eqref{eqn:h} in the context of $L^1$ stability. Let $\bar{u}^\theta$ be the initial defined as follows 
\begin{equation}
	\bar{u}^\theta:=\theta \bar{u}+(1-\theta)\bar{v}\mbox{ for some }\bar{u},\bar{v}\in\mathcal{D}.
\end{equation}
Let $u^\theta$ be the solution to \eqref{eqn-main}. Then taking derivative w.r.t $\theta$ we obtain
\begin{align*}
	0&=\frac{\pa}{\pa\theta}[u^\theta_t+A(u^\theta)u^\theta_x-(B(u^\theta)u^\theta_x)_x]\\
	&=\frac{\pa u^\theta_t}{\pa\theta}+\left(\frac{\pa u^\theta}{\pa \theta}\right)\bullet A(u^\theta)u^\theta_x+A(u^\theta)\frac{\pa u^\theta_x}{\pa \theta}+\left(\left(\frac{\pa u^\theta}{\pa \theta}\right)\bullet B(u^\theta)u^\theta_x+B(u^\theta)\frac{\pa u^\theta_x}{\pa \theta}\right)_x.
\end{align*}
Set $h=\frac{\pa u^\theta}{\pa \theta}$ and we can simplify as follows.
\begin{align*}
	0&=h_t+h\bullet A(u^\theta)u^\theta_x+A(u^\theta)h_x-(h\bullet B(u^\theta)u^\theta_x+B(u^\theta)h_x)_x\\
	&=h_t+h\bullet A(u^\theta)u^\theta_x+A(u^\theta)h_x-h_x\bullet B(u^\theta)u^\theta_x-h\bullet B(u^\theta)u^\theta_{xx}\\
	&-(u^\theta_x\otimes h):D^2B(u^\theta)u^\theta_x-u^\theta_x\bullet B(u^\theta)h_x-B(u^\theta)h_{xx}.
\end{align*}
We want to decompose $h$ as $h=\sum\limits_{i}h_ir_i$. Then we would like to get a forced equation for $h_i$. To this end, we first observe that
\begin{align*}
	(B(u)h)_{xx}&=(u_x\bullet B(u)h+B(u)h_x)_x\\
	&=u_{xx}\bullet B(u)h+2u_x\bullet B(u)h_x+(u_x\otimes u_x):D^2B(u)h+B(u)h_{xx}.
\end{align*}
This implies
\begin{align*}
	h_t+ (A(u)h)_x&=(B(u)h)_{xx}-u_{xx}\bullet B(u)h-2u_x\bullet B(u)h_x-(u_x\otimes u_x):D^2B(u)h\\
	&+u_x\bullet A(u)h-h\bullet A(u)u_x+h\bullet B(u)u_{xx}+(u_x\otimes h):D^2B(u)u_x\\
	&+u_x\bullet B(u)h_x+h_x\bullet B(u)u_x\\
	&=(B(u)h)_{xx}+h_x\bullet B(u)u_x-u_{xx}\bullet B(u)h+h\bullet B(u)u_{xx}-u_x\bullet B(u)h_x\\
	&+(u_x\otimes h):D^2B(u)u_x-(u_x\otimes u_x):D^2B(u)h\\
	&+u_x\bullet A(u)h-h\bullet A(u)u_x.
\end{align*}
Recall that $u_x=\sum\limits_{i}v_ir_i$ and $u_{xx}=\sum\limits_{i}v_{i,x}r_i+\sum\limits_{i,j}v_iv_jr_j\bullet r_i$. From decomposition of $h$, we have $h_x=\sum\limits_{i}h_{i,x}r_i+\sum\limits_{i,j}h_{i}v_jr_j\bullet r_i$. Hence, we can obtain
\begin{align*}
	&\sum\limits_{i}h_{i,t}r_i+\sum\limits_{i}h_iu_t\bullet r_i+\sum\limits_{i}(\la_ih_ir_i)_x\\
	&=\sum\limits_{i}(\mu_ih_ir_i)_{xx}+\sum\limits_{i,j}h_{j,x}v_ir_j\bullet B r_i+\sum\limits_{i,j}v_{i}v_{k}h_j(r_k\bullet r_j)\bullet B r_i\\
	&-\sum\limits_{i,j}v_{j,x}h_ir_j\bullet B r_i-\sum\limits_{i,j}v_{j}v_{k}h_i(r_k\bullet r_j)\bullet B r_i+\sum\limits_{i,j}h_{j}v_{i,x}r_j\bullet Br_i\\
	&+\sum\limits_{i,j,k}h_{j}v_{i}v_kr_j\bullet B(r_k\bullet r_i)-\sum\limits_{i,j}h_{i,x}v_{j}r_j\bullet Br_i-\sum\limits_{i,j,k}h_{i}v_{j}v_kr_j\bullet B(r_k\bullet r_i)\\
	&+\sum\limits_{i,j,k}v_{i}v_jh_k(r_j\otimes r_k):D^2B r_i-\sum\limits_{i,j,k}h_{i}v_jv_k(r_j\otimes r_k):D^2B r_i\\
	&+\sum\limits_{i,j}v_jh_ir_j\bullet Ar_i-\sum\limits_{i,j}h_jv_ir_j\bullet Ar_i.
\end{align*}
From \eqref{eqn-u-t} we have
\begin{align*}
	\sum\limits_{i}h_iu_t\bullet r_i&=-\sum\limits_{i,j}\la_jv_jh_ir_j\bullet r_i+\sum\limits_{i,j}\mu_jv_{j,x}h_ir_j\bullet r_i+\sum\limits_{i,j,k}v_jv_kh_i(r_k\bullet \mu_j) r_j\bullet r_i\\
	&+\sum\limits_{i,j,k}\mu_jv_jv_kh_i(r_k\bullet r_j)\bullet r_i.
\end{align*}
Furthermore, we get
\begin{align*}
	\sum\limits_{i}(\mu_ih_ir_i)_{xx}&=\sum\limits_{i}((\mu_ih_i)_xr_i+\mu_ih_iu_x\bullet r_i)_x\\
	&=\sum\limits_{i}(\mu_ih_i)_{xx}r_i+2\sum\limits_{i}(\mu_ih_i)_x(u_x\bullet r_i)+\sum\limits_{i,j}\mu_ih_i((v_jr_j)\bullet r_i)_x\\
	&=\sum\limits_{i}(\mu_ih_i)_{xx}r_i+2\sum\limits_{i,j}(u_x\bullet \mu_i h_i+\mu_i h_{i,x})v_j(r_j\bullet r_i)\\
	&+\sum\limits_{i,j}\mu_ih_iv_{j,x}r_j\bullet r_i+\sum\limits_{i,j,k}\mu_ih_iv_jv_k(r_k\bullet (r_j\bullet r_i))\\
	&=\sum\limits_{i}(\mu_ih_i)_{xx}r_i+2\sum\limits_{i,j,k} h_iv_jv_k (r_k\bullet \mu_i)r_j\bullet r_i+2\sum\limits_{i,j}\mu_i h_{i,x}v_j(r_j\bullet r_i)\\
	&+\sum\limits_{i,j}\mu_ih_iv_{j,x}r_j\bullet r_i+\sum\limits_{i,j,k}\mu_ih_iv_jv_k(r_k\bullet (r_j\bullet r_i)).
\end{align*}
We can now write
\begin{align*}
	&\sum\limits_{i}h_{i,t}r_i-\sum\limits_{i,j}\la_jv_jh_ir_j\bullet r_i+\sum\limits_{i,j}\mu_jv_{j,x}h_ir_j\bullet r_i+\sum\limits_{i,j,k}v_jv_kh_i(r_k\bullet \mu_j) r_j\bullet r_i\\
	&+\sum\limits_{i,j,k}\mu_jv_jv_kh_i(r_k\bullet r_j)\bullet r_i+\sum\limits_{i}(\la_ih_i)_xr_i+\sum\limits_{i,j}\la_ih_iv_jr_j\bullet r_i\\
	&=\sum\limits_{i}(\mu_ih_i)_{xx}r_i+2\sum\limits_{i,j,k} h_iv_jv_k (r_k\bullet \mu_i)r_j\bullet r_i+2\sum\limits_{i,j}\mu_i h_{i,x}v_j(r_j\bullet r_i)\\
	&+\sum\limits_{i,j}\mu_ih_iv_{j,x}r_j\bullet r_i+\sum\limits_{i,j,k}\mu_ih_iv_jv_k(r_k\bullet (r_j\bullet r_i))\\
	&+\sum\limits_{i,j}h_{j,x}v_ir_j\bullet B r_i+\sum\limits_{i,j}v_{i}v_{k}h_j(r_k\bullet r_j)\bullet B r_i\\
	&-\sum\limits_{i,j}v_{j,x}h_ir_j\bullet B r_i-\sum\limits_{i,j}v_{j}v_{k}h_i(r_k\bullet r_j)\bullet B r_i+\sum\limits_{i,j}h_{j}v_{i,x}r_j\bullet Br_i\\
	&+\sum\limits_{i,j,k}h_{j}v_{i}v_kr_j\bullet B(r_k\bullet r_i)-\sum\limits_{i,j}h_{i,x}v_{j}r_j\bullet Br_i-\sum\limits_{i,j,k}h_{i}v_{j}v_kr_j\bullet B(r_k\bullet r_i)\\
	&+\sum\limits_{i,j,k}v_{i}v_jh_k(r_j\otimes r_k):D^2B r_i-\sum\limits_{i,j,k}h_{i}v_jv_k(r_j\otimes r_k):D^2B r_i\\
	&+\sum\limits_{i,j}v_jh_ir_j\bullet Ar_i-\sum\limits_{i,j}h_jv_ir_j\bullet Ar_i,
\end{align*}
or equivalently,
\begin{align*}
	&\sum\limits_{i}(h_{i,t}+(\la_ih_i)_x-(\mu_ih_i)_{xx})r_i\\
	&=\sum\limits_{i,j}v_jh_i(\la_j-\la_i)r_j\bullet r_i-\sum\limits_{i,j,k}v_jv_kh_i(r_k\bullet \mu_j) r_j\bullet r_i-\sum\limits_{i,j,k}\mu_jv_jv_kh_i(r_k\bullet r_j)\bullet r_i\\
	&+2\sum\limits_{i,j,k} h_iv_jv_k (r_k\bullet \mu_i)r_j\bullet r_i+\sum\limits_{i,j,k}\mu_ih_iv_jv_k(r_k\bullet (r_j\bullet r_i))\\
	&+\sum\limits_{i,j}v_{i}v_{k}h_j(r_k\bullet r_j)\bullet B r_i-\sum\limits_{i,j,k}v_{j}v_{k}h_i(r_k\bullet r_j)\bullet B r_i+\sum\limits_{i,j,k}h_{j}v_{i}v_kr_j\bullet B(r_k\bullet r_i)\\
	&-\sum\limits_{i,j,k}h_{i}v_{j}v_kr_j\bullet B(r_k\bullet r_i)+\sum\limits_{i,j,k}v_{i}v_jh_k(r_j\otimes r_k):D^2B r_i-\sum\limits_{i,j,k}h_{i}v_jv_k(r_j\otimes r_k):D^2B r_i\\
	&+\sum\limits_{i,j}(\mu_i-\mu_j)v_{j,x}h_ir_j\bullet r_i-\sum\limits_{i,j}v_{j,x}h_ir_j\bullet B r_i+\sum\limits_{i,j}h_{j}v_{i,x}r_j\bullet Br_i\\
	&+2\sum\limits_{i,j}\mu_i h_{i,x}v_j(r_j\bullet r_i)+\sum\limits_{i,j}h_{j,x}v_ir_j\bullet B r_i-\sum\limits_{i,j}h_{i,x}v_{j}r_j\bullet Br_i\\
	&+\sum\limits_{i,j}v_jh_ir_j\bullet Ar_i-\sum\limits_{i,j}h_jv_ir_j\bullet Ar_i.
\end{align*}
We can now write
\begin{align*}
	&\sum\limits_{i}(h_{i,t}+(\la_ih_i)_x-(\mu_ih_i)_{xx})r_i\\
	&=\sum\limits_{i,j}\hat{p}_{ij}h_iv_j+\sum\limits_{i,j,k}\hat{q}_{ijk}h_iv_jv_k+\sum\limits_{i,j}\hat{s}_{ij}h_{i,x}v_j+\sum\limits_{i,j}\hat{w}_{ij}h_iv_{j,x},
\end{align*}
where $\hat{p}_{ij},\hat{q}_{ijk}, \hat{s}_{ij},\hat{w}_{ij}$ are defined as follows
\begin{align*}
	\hat{p}_{ij}&=(\la_j-\la_i)r_j\bullet r_i+r_j\bullet Ar_i-r_i\bullet Ar_j,\\
	\hat{q}_{ijk}&=-(r_k\bullet \mu_j) r_j\bullet r_i-\mu_j(r_k\bullet r_j)\bullet r_i+2(r_k\bullet \mu_i)r_j\bullet r_i+\mu_i(r_k\bullet (r_j\bullet r_i))\\
	&+(r_k\bullet r_i)\bullet B r_j-(r_k\bullet r_j)\bullet B r_i+r_i\bullet B(r_k\bullet r_j)-r_j\bullet B(r_k\bullet r_i)\\
	&+(r_j\otimes r_i):D^2B r_k-(r_j\otimes r_k):D^2B r_i,\\
	 \hat{s}_{ij}&=2\mu_i (r_j\bullet r_i)+r_i\bullet B r_j-r_j\bullet Br_i,\\
	 	 \hat{w}_{ij}&=	(\mu_i-\mu_j)r_j\bullet r_i-r_j\bullet B r_i+r_i\bullet Br_j.
\end{align*}
We observe that $\hat{p}_{kk}=\hat{q}_{kkk}=\hat{s}_{kk}=\hat{w}_{kk}=0$ for all $k$ due to the assumption $r_k\bullet r_k=0$.
\section{Parabolic estimate}
\begin{lemma}\label{lemma:transformation}
	Let $n\geq 1, p\geq1$, $u\in C^3(\R,\R^n)$ and $\mu_i:\R^n\rr[c_0,\f),1\leq i\leq n$ are $C^2$ functions for some $c_0>0$. Consider $\mathcal{T}:L^p(\R,\R^n)\rr L^p(\R,\R^n)$ defined as follows
	\begin{equation}
		\mathcal{T}(f)_i(x)=f_i(X_i(x))\mbox{ where }X_i(x)=\int\limits_{0}^{x}\frac{1}{\sqrt{\mu_i(u(z))}}\,dz.
	\end{equation}
	Then $\mathcal{T}$ is well-defined and it satisfies
	\begin{equation}\label{inequality-T-1}
		\norm{\mathcal{T}(f)}_{L^p}\leq c_0^{-\frac{1}{2p}}\norm{f}_{L^p}\leq c_0^{-\frac{1}{2p}}m^\frac{1}{2p}\norm{\mathcal{T}(f)}_{L^p}\mbox{ where }m=\sup\limits_{1\leq i\leq n}\norm{\mu_i(u(\cdot))}_{L^\f}.
	\end{equation} 
	Furthermore, for $f\in C^2$ we have
	\begin{align}
		&\norm{(\mathcal{T}(f))^\p}_{L^p}\leq m^\frac{p-1}{2p}\norm{f^\p}_{L^p}\leq c_0^{-\frac{p-1}{2p}}m^\frac{p-1}{2p}\norm{\mathcal{T}(f)^\p}_{L^p},	\label{inequality-T-2}\\
		&\norm{(\mathcal{T}(f))^{\p\p}}_{L^p}\leq 2m^\frac{2p-1}{2p}\norm{f^{\p\p}}_{L^p}+ 2m_1c_0^{-\frac{1}{2p}}\norm{u_x}_{L^\f}\norm{f^\p}_{L^p},	\label{inequality-T-3a}\\
		&\norm{f^{\p\p}}_{L^p}\leq 2c_0^{-\frac{1}{2p}}\norm{(\mathcal{T}(f))^{\p\p}}_{L^p}+ 2m_1c_0^{-\frac{p+1}{p}}\norm{u_x}_{L^\f}\norm{(\mathcal{T}(f))^\p}_{L^p},	\label{inequality-T-3b}
	\end{align}
	where $m_1:=\sup\limits_{1\leq i\leq n}\norm{D\mu_i(u(\cdot))}_{L^\f}$.

\end{lemma}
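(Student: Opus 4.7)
All four estimates are proved by a one-dimensional change of variables associated to the diffeomorphisms $X_i$. Since $X_i'(x)=\mu_i(u(x))^{-1/2}$ takes values in the bounded interval $[m^{-1/2},c_0^{-1/2}]$, each $X_i:\R\rr\R$ is a strictly monotone $C^3$ bijection with inverse satisfying the same kind of two-sided bound on its derivative. A short computation gives
$$X_i''(x)=-\frac{D\mu_i(u(x))\cdot u_x(x)}{2\,\mu_i(u(x))^{3/2}},\qquad\abs{X_i''(x)}\leq\frac{m_1}{2\,c_0^{3/2}}\norm{u_x}_{L^\f}.$$
Every inequality of the lemma will reduce to rewriting an $L^p$-integral in $x$ via the substitution $y=X_i(x)$ (under which $dx=\sqrt{\mu_i(u(y))}\,dy$), and controlling the resulting Jacobian through the sandwich $c_0\leq\mu_i\leq m$.

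For \eqref{inequality-T-1} the direct substitution in $\norm{\mathcal{T}(f)_i}_{L^p}^p=\int\abs{f_i(X_i(x))}^p\,dx$ produces an integral of $\abs{f_i}^p$ weighted by $\sqrt{\mu_i}$, and bracketing this weight between $\sqrt{c_0}$ and $\sqrt{m}$ gives both inequalities upon taking $p$-th roots and summing over $i$. For \eqref{inequality-T-2} the chain rule yields $(\mathcal{T}(f)_i)'(x)=f_i'(X_i(x))/\sqrt{\mu_i(u(x))}$, and the same change of variables converts its $L^p$-norm to the power $p$ into
$$\int\abs{f_i'(y)}^p\mu_i(u(y))^{-(p-1)/2}\,dy,$$
which is again sandwiched by appropriate powers of $c_0$ and $m$.

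The only mildly delicate step is the pair \eqref{inequality-T-3a}--\eqref{inequality-T-3b}. Differentiating once more,
$$(\mathcal{T}(f)_i)''(x)=\frac{f_i''(X_i(x))}{\mu_i(u(x))}+f_i'(X_i(x))\,X_i''(x),$$
so \eqref{inequality-T-3a} follows from the triangle inequality in $L^p(dx)$: the first summand reduces by substitution to $\bigl(\int\abs{f_i''}^p\mu_i^{-(2p-1)/2}\,dy\bigr)^{1/p}$, controlled by a power of $\mu_i$ and hence by the asserted constant times $\norm{f_i''}_{L^p}$, while the second is handled by combining the pointwise bound on $X_i''$ with \eqref{inequality-T-1} applied to $f_i'$ as base function. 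For \eqref{inequality-T-3b} I solve the displayed identity for the leading term,
$$\frac{f_i''(X_i(x))}{\mu_i(u(x))}=(\mathcal{T}(f)_i)''(x)-f_i'(X_i(x))\,X_i''(x),$$
take $L^p(dx)$-norm, apply triangle inequality together with $\mu_i\geq c_0$ to control the multiplicative factor $\mu_i^{-1}$, and then invoke the already-established reverse directions of \eqref{inequality-T-1}--\eqref{inequality-T-2} to convert $\norm{f_i'}_{L^p}$ into $\norm{(\mathcal{T}(f))'}_{L^p}$ and to upgrade $\norm{f_i''(X_i(\cdot))}_{L^p}$ into $\norm{f_i''}_{L^p}$. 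The entire argument reduces to the chain rule and monotone substitution; the only obstacle is bookkeeping the powers of $c_0$ and $m$ produced by the Jacobian $\sqrt{\mu_i}$ at each substitution and by the successive inversions, and the factor $2$ in the stated constants comfortably absorbs the constant $1/2$ appearing in $X_i''$.
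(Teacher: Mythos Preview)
Your proposal is correct and follows essentially the same route as the paper: both arguments reduce to the substitution $y=X_i(x)$, the chain rule for $(\mathcal{T}(f)_i)'$ and $(\mathcal{T}(f)_i)''$, and the two-sided bound $c_0\leq\mu_i\leq m$ on the Jacobian, with the second-derivative estimates handled by the triangle inequality (the paper phrases this as $(a+b)^p\leq 2^{p-1}(a^p+b^p)$, which is equivalent). One small caveat: taking the definition $\mathcal{T}(f)_i(x)=f_i(X_i(x))$ literally, your substitution produces the weight $\sqrt{\mu_i}$, whereas the paper's own computation uses the inverse convention $\mathcal{T}(f)_i(y)=f_i(X_i^{-1}(y))$ and gets the weight $1/\sqrt{\mu_i}$; this merely swaps the roles of $c_0$ and $m$ in the resulting constants and reflects a notational inconsistency in the statement rather than any defect in your method.
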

\begin{proof}
	By using the definition of $\mathcal{T}(f)_i$ it follows that
	\begin{equation*}
		\int\limits_{\R}\abs{\mathcal{T}(f)_i(y)}^p\,dy=	\int\limits_{\R}\abs{f(X_i^{-1}(y))}^p\,dy=	\int\limits_{\R}\abs{f(x)}^p\,\frac{1}{\sqrt{\mu_i(u(x))}}\,dx.
	\end{equation*}
	Therefore, the first inequality in \eqref{inequality-T-1} follows. Similarly, we get the second inequality in \eqref{inequality-T-1}. Now, to see \eqref{inequality-T-2}, we first calculate the following derivatives.
	\begin{equation*}
		\frac{d\mathcal{T}(f)_i}{dy}(y)=\sqrt{\mu_i(u(x))}\frac{df_i}{dx}(x)\mbox{ where }y=X_i(x).
	\end{equation*}
	Hence, we have
	\begin{align*}
		\int\limits_{\R}\abs{\frac{d\mathcal{T}(f)_i}{dy}(y)}^p\,dy&=\int\limits_{\R}[\mu_i(u(X_i^{-1}(y)))]^\frac{p}{2}\abs{\frac{df_i}{dx}(X_i^{-1}(y))}^p\,dy\\
		&=	\int\limits_{\R}[\mu_i(u(x))]^\frac{p-1}{2}\abs{\frac{df_i}{dx}(x)}^p\,dx.
	\end{align*}
	Therefore, the first inequality in \eqref{inequality-T-2} follows and similarly, one can achieve the second inequality in \eqref{inequality-T-2}. Now, we can also derive
	\begin{equation}\label{lemma-1-cal-1}
		\frac{d^2\mathcal{T}(f)_i}{dy^2}(y)= \mu_i(u(x))\frac{d^2f_i}{dx^2}(x)+u_x(x)\cdot D\mu_i(u(x))\frac{df_i}{dx}(x)\mbox{ where }y=X_i(x).
	\end{equation}
	By using the inequality $(a+b)^p\leq 2^{p-1}(a^p+b^p)$ for $p\geq 1, a>0,b>0$, we can obtain
	\begin{align*}
		\int\limits_{\R}\abs{\frac{d^2\mathcal{T}(f)_i}{dy^2}(y)}^p\,dy&\leq 2^{p-1}\int\limits_{\R}[\mu_i(u(X_i^{-1}(y)))]^p\abs{\frac{d^2f_i}{dx^2}(X_i^{-1}(y))}^p\,dy\\
		&+2^{p-1}\int\limits_{\R}\abs{u_x(X_i^{-1}(y))\cdot D\mu_i(u(X_i^{-1}(y)))\frac{df_i}{dx}(X_i^{-1}(y))}^p\,dy\\
		&=2^{p-1}\int\limits_{\R}[\mu_i(u(x))]^\frac{2p-1}{2}\abs{\frac{d^2f_i}{dx^2}(x)}^p\,dx\\
		&+2^{p-1}\norm{u_x}^p_{L^\f}\norm{D\mu_i(u)}^p_{L^\f}\int\limits_{\R}[\mu_i(u(x))]^{-\frac{1}{2}}\abs{\frac{df_i}{dx}(x)}^p\,dx.
	\end{align*}
	Hence, the inequality \eqref{inequality-T-3a} follows. Again from \eqref{lemma-1-cal-1} we have
	\begin{align*}
		\int\limits_{\R}\abs{\frac{d^2f_i}{dx^2}(x)}^p\,dx&\leq 2^{p-1}\int\limits_{\R}\frac{1}{\mu_i(u(x))}\abs{\frac{d^2\mathcal{T}(f)_i}{dy^2}(X_i(x))}^p\,dx\\
		&+2^{p-1}\int\limits_{\R}\frac{1}{\mu_i(u(x))}\abs{u_x(x)\cdot D\mu_i(u(x))\frac{df_i}{dx}(x)}^p\,dx\\
		&\leq 2^{p-1}\int\limits_{\R}\frac{1}{\sqrt{\mu_i(u(X_i^{-1}(y)))}}\abs{\frac{d^2\mathcal{T}(f)_i}{dy^2}(y)}^p\,dy\\
		&+2^{p-1}c_0^{-1}\norm{u_x}^p_{L^\f}\norm{D\mu_i(u)}^p_{L^\f}\int\limits_{\R}\abs{\frac{df_i}{dx}(x)}^p\,dx.
	\end{align*}
	Therefore, by using \eqref{inequality-T-2}, we obtain the inequality \eqref{inequality-T-3b}.
\end{proof}
We consider $G$ as the fundamental solution of the following parabolic equation
\begin{equation}
	w_t+A_2^*w_x=w_{xx},
\end{equation}
where $A_2^*=A_1^*B_1^{-1/2}(u^*)$. The function $G$ satisfies the following estimates
\begin{equation}\label{def:kappa}
	\norm{G(t,\cdot)}_{L^1}\leq \kappa,\quad	\norm{G_x(t,\cdot)}_{L^1}\leq \frac{\kappa}{\sqrt{t}},\quad	\norm{G(t,\cdot)}_{L^1}\leq \frac{\kappa}{t},
\end{equation}
for some constant $\kappa>0$.

To have the parabolic estimates for \eqref{eqn-main}, we define the following constants
\begin{align}
	\kappa_1&=2\max\{c_0^{-\frac{1}{2}},m^\frac{1}{2},m_1,1\},\\
	\kappa_A&=\sup\limits_{\abs{u-u^*}\leq \bar{\de}}\max\limits_{k=1,2}\left\{\abs{A(u)},\abs{D^kA(u)},1\right\},\\
	\kappa_B&=\sup\limits_{\abs{u-u^*}\leq \bar{\de}}\max\limits_{k=1,2,3,1\leq i\leq n}\left\{\abs{B(u)},\abs{B^{-1}(u)},\abs{D^{k} B(u)},(\mu_i+\mu_i^{-1}),\abs{D^k\mu_i}\right\},\\
	\kappa_P&=\sup\limits_{\abs{u-u^*}\leq \bar{\de}}\max\limits_{k=1,2,3}\left\{\abs{P(u)},\abs{P^{-1}(u)},\abs{D^{k} P(u)},\abs{D^{k} P^{-1}(u)}\right\}. \label{def:kappa-P}
\end{align}
where $m,m_1,c_0$ are as in Lemma \ref{lemma:transformation}. By the definition we get $\kappa_1,\kappa_A\geq1$. Since $(\mu_i+\mu_i^{-1})\geq1$ and $\abs{P(u)}\abs{P^{-1}(u)}\geq1$, we have $\kappa_B,\kappa_P\geq1$.

\begin{proposition}\label{prop:parabolic}
	Let $u$ be a solution to the equation \eqref{eqn-main} satisfying 
	\begin{equation}\label{assumption:u-L1}
		\norm{u_x(t,\cdot)}_{L^1}\leq \de_0\mbox{ for all }t\in[0,\hat{t}]\mbox{ where }\hat{t}:=\left(\frac{1}{2000\kappa_1^4\kappa_A^3\kappa_B^6\kappa_P^{20}\de_0}\right)^2,
	\end{equation}
	for some $\de_0<1$ and $\kappa,\kappa_1,\kappa_A,\kappa_B,\kappa_P$ are as in \eqref{def:kappa}--\eqref{def:kappa-P}. Then we have
	\begin{equation}\label{estimate:parabolic-1}
		\norm{u_{xx}(t,\cdot)}_{L^1}\leq \frac{2\kappa\kappa_1^2\kappa_P^2\de_0}{\sqrt{t}}.
	\end{equation}
\end{proposition}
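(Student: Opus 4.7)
The plan is to derive a Duhamel representation for $u_{xx}$ by reducing the equation to constant-coefficient parabolic form and then exploiting the parabolic smoothing encoded in the bounds \eqref{def:kappa} on $G_x$. Concretely, I would freeze the coefficients at a reference state $u^*\in\mathcal{U}$, use the commutativity of $A(u^*)$ and $B(u^*)$ to diagonalize simultaneously via $P(u^*)$, and apply the componentwise rescaling $\mathcal{T}$ of Lemma~\ref{lemma:transformation} which normalizes each variable diffusion $\mu_i(u(x))$ to $1$. After these steps the linear part is $\partial_t + A_2^*\partial_y - \partial_y^2$, whose fundamental solution is $G$, and the residual variable-coefficient diffusion moves into drift-type source terms. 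Crucially, no spurious third-derivative $u_{xxx}$ appears in the source. The remaining source consists of first-order drift errors such as $(\lambda_i/\sqrt{\mu_i(u)}-\lambda_i^*/\sqrt{\mu_i(u^*)})\partial_y v_i$ of size $O(\delta_0)$, together with the coupling terms $\phi_i$ already computed in Section~\ref{sec:Outline}, which are quadratic/cubic in $(v_j,v_{j,x})$ and, thanks to the Temple-class identity $r_k\bullet r_k=0$, contain no diagonal self-interactions $v_i^2$, $v_iv_{i,x}$, or $v_i^3$.

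\textbf{Duhamel and source bounds.} For the decomposition $u_x=\sum_i v_i r_i$, I would apply Duhamel componentwise in the $y$-coordinates,
\[
\mathcal{T}(v_i)(t) = G(t)\ast \mathcal{T}(v_i)(0) + \int_0^t G(t-s)\ast\Phi_i(s)\,ds,
\]
where $\Phi_i$ is the source described above. Differentiating in $y$ and taking $L^1$ norms yields
\[
\norm{\mathcal{T}(v_i)_y(t)}_{L^1}\leq \frac{\kappa}{\sqrt{t}}\norm{\mathcal{T}(v_i)(0)}_{L^1} + \int_0^t \frac{\kappa}{\sqrt{t-s}}\norm{\Phi_i(s)}_{L^1}\,ds.
\]
Using the Sobolev-type inequalities $\norm{v_j}_{L^\infty}\leq\norm{v_{j,x}}_{L^1}$ and $\norm{v_j}_{L^2}^2\leq\delta_0\norm{v_{j,x}}_{L^1}$, together with the Temple-class cancellation of diagonal self-interactions, every product appearing in $\Phi_i$ can be controlled by $C\delta_0\norm{v_{j,x}(s)}_{L^1}$ for an appropriate $j$. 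Translating back to the original $x$-variable via \eqref{inequality-T-1}--\eqref{inequality-T-3b} and summing over $i$, this gives the integral inequality
\[
\norm{u_{xx}(t)}_{L^1}\leq \frac{\kappa\kappa_1^2\kappa_P^2\delta_0}{\sqrt{t}} + C'\delta_0\int_0^t\frac{\norm{u_{xx}(s)}_{L^1}}{\sqrt{t-s}}\,ds,
\]
where $C'$ is polynomial in $\kappa_1,\kappa_A,\kappa_B,\kappa_P$.

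\textbf{Closing the estimate.} Introduce $\Phi(t):=\sup_{0<s\leq t}\sqrt{s}\,\norm{u_{xx}(s)}_{L^1}$. Since $\int_0^t(s(t-s))^{-1/2}\,ds=\pi$, the inequality above becomes
\[
\Phi(t)\leq \kappa\kappa_1^2\kappa_P^2\delta_0 + C'\pi\,\delta_0\,\sqrt{t}\,\Phi(t).
\]
The large polynomial factor in the choice $\hat{t}=(2000\kappa_1^4\kappa_A^3\kappa_B^6\kappa_P^{20}\delta_0)^{-2}$ is precisely calibrated so that $C'\pi\delta_0\sqrt{t}\leq 1/2$ for $t\leq \hat t$; absorbing the last term into the left-hand side yields $\Phi(t)\leq 2\kappa\kappa_1^2\kappa_P^2\delta_0$, which is \eqref{estimate:parabolic-1}. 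The a priori validity of $\Phi$ as a bounded quantity is justified by a short-time bootstrap: local parabolic well-posedness gives smoothness of $u$ for positive times, so $\Phi$ is continuous and vanishes at $t=0^+$, and the inequality above forbids it from crossing $2\kappa\kappa_1^2\kappa_P^2\delta_0$.

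\textbf{Main obstacle.} The principal technical hurdle is the variable-coefficient diffusion $B(u)$. A direct Duhamel on the naive frozen equation $u_t+A(u^*)u_x-B(u^*)u_{xx}=\Psi$ produces a source $\Psi$ containing the second-derivative term $(B(u)-B(u^*))u_{xx}$: either the resulting convolution $G_{xx}\ast\Psi$ has a non-integrable $1/(t-s)$ kernel, or (by integration by parts) the problem is shifted onto the uncontrolled $u_{xxx}$. The rescaling $\mathcal{T}$ sidesteps this by normalizing the \emph{actual} diffusion $\mu_i(u(x))$ rather than the frozen $\mu_i(u^*)$; in the new $y$-variables the diffusion is identically one and the residual variable-coefficient error enters only through drift and zeroth-order terms, controlled by $\delta_0\norm{v_{j,x}}_{L^1}$. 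A secondary difficulty is the careful bookkeeping of the constants $\kappa_1,\kappa_A,\kappa_B,\kappa_P$ propagated through the simultaneous diagonalization and the nonlinear change of variables, which is what ultimately dictates the large polynomial appearing in the definition of $\hat t$.
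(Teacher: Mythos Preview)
Your overall architecture---diagonalize via $P(u)$, normalize the variable diffusion through the rescaling $\mathcal{T}$ of Lemma~\ref{lemma:transformation}, then run Duhamel against $G$ and bootstrap on $\sqrt{t}\,\norm{u_{xx}(t)}_{L^1}$---matches the paper. The gap is in the source estimate and in the choice of Duhamel window. The claim that every term in $\Phi_i$ is bounded by $C\de_0\norm{v_{j,x}}_{L^1}$ is false: even within $\phi_i$ the off-diagonal product $q_{jk}^i v_{j,x}v_k$ (which survives the Temple cancellation $r_k\bullet r_k=0$) obeys only $\norm{v_{j,x}v_k}_{L^1}\le\norm{v_{j,x}}_{L^1}\norm{v_k}_{L^\infty}\lesssim\norm{u_{xx}}_{L^1}^2$, and neither of your Sobolev inequalities improves this to $\de_0\norm{u_{xx}}_{L^1}$. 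Two further quadratic contributions you omit are the term $-\tilde v_{i,x}X_{i,t}$ arising because the change of variable $y=X_i(t,x)$ is \emph{time}-dependent (since $\mu_i(u(t,x))$ is), and the lower-order piece $B_1(P^{-1}v)\bullet B_1^{-1/2}\tilde v_x$ generated when $(\mu_i v_i)_{xx}$ is rewritten in the $y$-variable; both are again of order $\norm{u_{xx}}_{L^1}^2$.

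These quadratic terms destroy your closing step: under the bootstrap $\norm{u_{xx}(s)}_{L^1}\le\Phi(t)/\sqrt{s}$ they contribute $\Phi(t)^2\int_0^t \bigl(s\sqrt{t-s}\bigr)^{-1}\,ds$, which diverges at $s=0$. The paper's remedy is to write Duhamel on $[t/2,t]$ instead of $[0,t]$. The seed term $G_x(t/2)\star\tilde v(t/2)$ then uses only the hypothesis $\norm{u_x(t/2)}_{L^1}\le\de_0$ (not $\norm{u_{xx}}$), and the source integral $\int_{t/2}^t\frac{1}{\sqrt{t-s}}\bigl(\frac{\de_0^2}{s}+\frac{\de_0^2}{\sqrt{s}}\bigr)\,ds$ is finite and, with the stated choice of $\hat t$, small enough to yield strict inequality at any putative first time $t^*$ of equality in \eqref{estimate:parabolic-1}. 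Your $\Phi$-bootstrap and the paper's contradiction argument are equivalent once the Duhamel window is corrected; the single missing idea is the restart at $t/2$.
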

\begin{proof} We will argue by contradiction. Before we proceed for that we would like to make a suitable change of variable. To this end, first we can take derivative w.r.t $x$ on both sides of \eqref{eqn-main} to get
	\begin{equation}
		(u_x)_t+A(u)u_{xx}=B(u)u_{xxx}+(u_x\bullet B(u)u_x)_x-u_x\bullet A(u)u_x.
	\end{equation}
	We would like to make a change of variable $v=P(u)u_x$. To this end, we write
	\begin{align*}
		(Pu_x)_t+PA(u)P^{-1}(P(u)u_{x})_x&=PBP^{-1}(Pu_x)_{xx}-PBP^{-1}(u_x\bullet P(u)u_x)_x\\
		&-PBP^{-1}[u_x\bullet Pu_{xx}]+u_t\bullet P(u)u_x\\
		&+PA(u)P^{-1}(u_x\bullet P(u)u_{x})\\
		&+P(u_x\bullet B(u)u_x)_x-Pu_x\bullet A(u)u_x.
	\end{align*}
	Then, if $A_1=PAP^{-1}$ and $B_1=\mbox{diag}(\mu_1(u),\cdots,\mu_n(u))=PBP^{-1}$, we have
	\begin{align*}
		v_t+A_1(u)v_x&=B_1(u)v_{xx}-B_1((P^{-1}(u)v)\bullet PP^{-1}v)_x-B_1[(P^{-1}v)\bullet P(P^{-1}v)_x]\\
		&+u_t\bullet P(u)P^{-1}v+A_1(u)(P^{-1}(u)v\bullet P(u)P^{-1}v)\\
		&+P(P^{-1}v\bullet B(u)P^{-1}v)_x-P(P^{-1}v)\bullet A(u)P^{-1}v.
	\end{align*}
	Hence,
	\begin{align*}
		v_t+A_1(u)v_x&=B_1(u)v_{xx}-B_1((u_x\bullet P^{-1}(u)v)\bullet PP^{-1}v)\\
		&-B_1((P^{-1}(u)v_x)\bullet PP^{-1}v)-B_1(u_x\otimes (P^{-1}(u)v)\bullet D^2PP^{-1}v)\\
		&-2B_1[(P^{-1}v)\bullet P(P^{-1}v)\bullet P^{-1}v]-2B_1[(P^{-1}v)\bullet PP^{-1}v_x]\\
		&+u_t\bullet P(u)P^{-1}v+A_1(u)(P^{-1}(u)v\bullet P(u)P^{-1}v)\\
		&+P(P^{-1}v)\bullet P^{-1}v\bullet B(u)P^{-1}v+PP^{-1}v_x\bullet B(u)P^{-1}v\\
		&+P(P^{-1}v)\otimes(P^{-1}v):D^2B(u)P^{-1}v+P(P^{-1}v)\bullet B(u)(P^{-1}v)\bullet P^{-1}v\\
		&+P(P^{-1}v)\bullet B(u)P^{-1}v_x-P(P^{-1}v)\bullet A(u)P^{-1}v.
	\end{align*}
	We observe that $u_t=BP^{-1}v_x+B[(P^{-1}(u)v)\bullet P^{-1}(u)v]-AP^{-1}v+(P^{-1}(u)v)\bullet B(u)P^{-1}(u)v$. Then we get
	\begin{equation*}
		v_t+A_1^*v_x=B_1(u)v_{xx}+\mathcal{R},
	\end{equation*}
	where $\mathcal{R}=\mathcal{R}(u,v,v_x)$ is defined as follows
	\begin{align}
		\mathcal{R}(u,v,v_x)&:=[A_1^*-A_1(u)]v_x-B_1(((P^{-1}(u)v)\bullet P^{-1}(u)v)\bullet PP^{-1}v)\nonumber\\
		&-B_1((P^{-1}(u)v_x)\bullet PP^{-1}v)-B_1((P^{-1}(u)v)\otimes (P^{-1}(u)v)\bullet D^2PP^{-1}v)\nonumber\\
		&-2B_1[(P^{-1}v)\bullet P(P^{-1}v)\bullet P^{-1}v]-2B_1[(P^{-1}v)\bullet PP^{-1}v_x]\nonumber\\
		&+[BP^{-1}v_x+B[(P^{-1}(u)v)\bullet P^{-1}(u)v]-AP^{-1}v]\bullet P(u)P^{-1}v\nonumber\\
		&+[(P^{-1}(u)v)\bullet B(u)P^{-1}(u)v]\bullet P(u)P^{-1}v+A_1(u)(P^{-1}(u)v\bullet P(u)P^{-1}v)\nonumber\\
		&+P(P^{-1}v)\bullet P^{-1}v\bullet B(u)P^{-1}v+PP^{-1}v_x\bullet B(u)P^{-1}v\nonumber\\
		&+P(P^{-1}v)\otimes(P^{-1}v):D^2B(u)P^{-1}v+P(P^{-1}v)\bullet B(u)(P^{-1}v)\bullet P^{-1}v\nonumber\\
		&+P(P^{-1}v)\bullet B(u)P^{-1}v_x-P(P^{-1}v)\bullet A(u)P^{-1}v.\label{def:remainder-R}
	\end{align}Next, we do a change of variable $v\mapsto \tilde{v}$ such that $v(t,x)=(\tilde{v}_i(t,X_i(t,x)))$ where $ (X_i)_x=\frac{1}{\sqrt{\mu_i(u)}}$.  We observe that
	\begin{equation*}
		v_t=\tilde{v}_t+\sum\limits_{i}\tilde{v}_{i,x}X_{i,t},\quad v_x=B_1^{-1/2}\tilde{v}_{x}\quad\mbox{and }v_{xx}=B_1^{-1}\tilde{v}_{xx}+(P^{-1}v)\bullet B_1^{-1/2}\tilde{v}_{x}.
	\end{equation*}
	Then we have
	\begin{align}
		\tilde{v}_t+A_2^*\tilde{v}_x&=\tilde{v}_{xx}+\mathcal{T}([A_2^*-A^*_1B_1^{-1/2}(u)]\tilde{v}_x+B_1(P^{-1}{v})\bullet B_1^{-1/2}\tilde{v}_{x})\nonumber\\
		&-\sum\limits_{i}\mathcal{T}(\tilde{v}_{i,x}X_{i,t})+ \mathcal{T}(\mathcal{R}),\label{eqn-tilde-v}
	\end{align}
	where $A_2^*=A_1^*B_1^{-1/2}(u^*)$. 
	
	We first prove \eqref{estimate:parabolic-1} for smooth initial data.  We argue by contradiction. To this end, first we assume that the conclusion \eqref{estimate:parabolic-1} does not hold. Due to the assumption of smoothness of initial data, solution is smooth up to a small time and due to the continuity we can assume that there exists a time $t^*$ such that \eqref{estimate:parabolic-1} holds for $t\in[0,t^*]$ and equality attains at $t=t^*$. We compute 
	\begin{align}
		X_{i,t}=-\int\limits_{0}^{x}\frac{u_t\cdot D\mu_i(u)}{2(\mu_i(u))^{3/2}}\,dx=\int\limits_{0}^{x}\frac{(B(u)u_x)_x\cdot D\mu_i(u)-A(u)u_x\cdot D\mu_i(u)}{2(\mu_i(u))^{3/2}}\,dx.
	\end{align}
	Using integration by we can have
	\begin{align*}
		X_{i,t}&=\int\limits_{0}^{x}\frac{(B(u)u_x)_x\cdot D\mu_i(u)-A(u)u_x\cdot D\mu_i(u)}{2(\mu_i(u))^{3/2}}\,dx\\
		&=\int\limits_{0}^{x}\left[-\frac{(B(u)u_x)\otimes u_x: D^2\mu_i(u)}{2(\mu_i(u))^{3/2}}-\frac{3((B(u)u_x)\cdot D\mu_i(u))(u_x\cdot D\mu_i(u))}{4(\mu_i(u))^{3/2}}\right]\,dx\\
		&+\frac{B(u)u_x\cdot D\mu_i(u)}{2(\mu_i(u))^{3/2}}(x)-\frac{B(u)u_x\cdot D\mu_i(u)}{2(\mu_i(u))^{3/2}}(0)-\int\limits_{0}^{x}\frac{A(u)u_x\cdot D\mu_i(u)}{2(\mu_i(u))^{3/2}}\,dx.
	\end{align*}
	Then we get
	\begin{align}
		\norm{X_{i,t}(t,\cdot)}_{L^\f(\R)}&\leq 2\kappa_B^5\kappa_P^2\norm{v}_{L^1}\norm{v}_{L^\f}+4\kappa_B^4\kappa_P\norm{v}_{L^\f}+ \kappa_A\kappa^3_B\kappa_P\norm{v}_{L^1}\nonumber\\
		&\leq [2\kappa_B^5\kappa_P^2\de_0+4\kappa_B^4\kappa_P]\norm{v}_{L^\f}+ \kappa_A\kappa^3_B\kappa_P\de_0\nonumber\\
		&\leq 8\kappa_B^5\kappa_A\kappa_P^2[\norm{v}_{L^\f}+\de_0].
	\end{align}
	Furthermore, we observe that
	\begin{align}
		&\norm{\tilde{v}}_{L^1}\leq \kappa_1\norm{v}_{L^1}\leq \kappa_P\kappa_1\norm{u_x}_{L^1},\\
		&\norm{\tilde{v}_x}_{L^1}\leq \kappa_1\norm{v_x}_{L^1}\leq \kappa_P\kappa_1\norm{u_{xx}}_{L^1},\\
		&\norm{u_{xx}}_{L^1}\leq \kappa_P\norm{v_x}_{L^1}\leq \kappa_P\kappa_1\norm{\tilde{v}_x}_{L^1}.
	\end{align}
	From \eqref{eqn-tilde-v} we can write
	\begin{align*}
		\tilde{v}_{x}&=G_x(t/2)\star \tilde{v}(t/2)+\int\limits_{t/2}^{t}G_x(t-s)\star \Big\{\mathcal{T}([A_2^*-A^*_1B_1^{-1/2}(u)]\tilde{v}_x\\
		&\quad\quad\quad+B_1(P^{-1}{v})\bullet B_1^{-1/2}\tilde{v}_{x})-\sum\limits_{i}\mathcal{T}(\tilde{v}_{i,x}X_{i,t})+ \mathcal{T}(\mathcal{R})\Big\}\,ds.
	\end{align*}
	Using our previous observation, we have
	\begin{align*}
		\norm{\tilde{v}_{x}(t)}_{L^1}&\leq \norm{G_x(t/2)}_{L^1}\norm{\tilde{v}(t/2)}_{L^1}+\int\limits_{t/2}^{t}\norm{G_x(t-s)}_{L^1}\norm{\mathcal{T}([A_2^*-A^*_1B_1^{-1/2}(u)]\tilde{v}_x)}_{L^1}\,ds\\
		&+\int\limits_{t/2}^{t}\norm{G_x(t-s)}_{L^1}\norm{\mathcal{T}(B_1(P^{-1}{v})\bullet B_1^{-1/2}\tilde{v}_{x})-\sum\limits_{i}\mathcal{T}(\tilde{v}_{i,x}X_{i,t})+ \mathcal{T}(\mathcal{R})}_{L^1}\,ds\\
		&\leq \frac{2\kappa\de_0}{\sqrt{t}}+\int\limits_{t/2}^{t}\frac{\kappa\kappa_1}{\sqrt{t-s}}\norm{[A_2^*-A^*_1B_1^{-1/2}(u)]\tilde{v}_x}_{L^1}\,ds\\
		&+\int\limits_{t/2}^{t}\frac{\kappa\kappa_1}{\sqrt{t-s}}\norm{ B_1(P^{-1}{v})\bullet B_1^{-1/2}\tilde{v}_{x}-\sum\limits_{i}\tilde{v}_{i,x}X_{i,t}+\mathcal{R}}_{L^1}\,ds.	
	\end{align*}
	We note that
	\begin{align*}
		\norm{[A_2^*-A^*_1B_1^{-1/2}(u)]\tilde{v}_x}_{L^1}&\leq \kappa_A\kappa_B\norm{u-u^*}_{L^\f}\norm{\tilde{v}_x}_{L^1}\\
		&\leq \kappa_A\kappa_B\kappa_P\kappa_1\norm{u-u^*}_{L^\f}\norm{u_{xx}}_{L^1},\\
		\norm{B_1(P^{-1}{v})\bullet B_1^{-1/2}\tilde{v}_{x}}_{L^1}&\leq \kappa_B^2\kappa_P\norm{v}_{L^\f}\norm{\tilde{v}_x}_{L^1}\\
		&\leq \kappa_B^2\kappa^3_P\kappa_1\norm{u_x}_{L^\f}[\norm{u_{xx}}_{L^1}+\norm{u_x}_{L^1}\norm{u_x}_{L^\f}]\\
		&\leq 2\kappa_B^2\kappa^3_P\kappa_1\norm{u_x}_{L^\f}\norm{u_{xx}}_{L^1},\\
		\norm{-\sum\limits_{i}\tilde{v}_{i,x}X_{i,t}}_{L^1}&\leq \sum\limits_{i}\norm{\tilde{v}_{i,x}}_{L^1}\norm{X_{i,t}}_{L^\f}\\
		&\leq8\kappa_B^5\kappa_A\kappa_P^2\sum\limits_{i}\norm{\tilde{v}_{i,x}}_{L^1}[\norm{v}_{L^\f}+\de_0]\\
		&\leq 8\kappa_B^5\kappa_A\kappa_1\kappa^4_P\norm{u_{xx}}[\norm{u_x}_{L^\f}+\de_0].
	\end{align*}
	From \eqref{def:remainder-R} we get
	\begin{align*}
		\norm{\mathcal{R}}_{L^1}&\leq 30\kappa_B^5\kappa_P^6\left[\norm{v}_{L^\f}^2\norm{v}_{L^1}+\norm{v}_{L^\f}\norm{v_x}_{L^1}+\norm{v}_{L^\f}\norm{v}_{L^1}\right]\\
		&\leq 30\kappa_B^5\kappa_P^8\left[\norm{u_x}_{L^\f}^2\norm{u_x}_{L^1}+\norm{u_x}_{L^\f}\norm{u_{xx}}_{L^1}+\norm{u_x}_{L^\f}\norm{u_x}_{L^1}\right]\\
		&\leq 90\kappa_B^5\kappa_P^8\norm{u_{xx}}^2_{L^1}.
	\end{align*}
	Therefore, we get
	\begin{align*}
		\norm{\tilde{v}_{x}(t)}_{L^1}
		&\leq \frac{\sqrt{2}\kappa\kappa_1\kappa_P\de_0}{\sqrt{t}}+600\kappa_1^6\kappa^3\kappa_A\kappa_B^7\kappa_P^{12}\int\limits_{t/2}^{t}\frac{1}{\sqrt{t-s}}\left[\frac{\de_0^2}{s}+\frac{\de_0^2}{\sqrt{s}}\right]\,ds\\
		&< \frac{2\kappa\kappa_1\kappa_P\de_0}{\sqrt{t}},
	\end{align*}
	which implies
	\begin{equation}
		\norm{u_{xx}(t^*,\cdot)}_{L^1}<\frac{2\kappa\kappa^2_1\kappa^2_P\de_0}{\sqrt{t^*}}.
	\end{equation}
  This contradicts with the assumption that equality holds in \eqref{estimate:parabolic-1} at $t=t^*$. Hence, by density argument we conclude that \eqref{estimate:parabolic-1} holds for BV initial data as well.
    \end{proof}

Next we prove a similar parabolic estimate for $h$ satisfying the linearized equation \eqref{eqn:h}.
\begin{proposition}\label{prop:parabolic-h}
	Let $u$ be as in Proposition \ref{prop:parabolic}. Let $h$ be a solution to the equation \eqref{eqn:h} satisfying 
	\begin{equation}\label{assumption:h-L1}
		\norm{h(t,\cdot)}_{L^1}\leq \de_0\mbox{ for all }t\in[0,\hat{t}]\mbox{ where }\hat{t}:=\left(\frac{1}{2000\kappa_1^4\kappa_A^3\kappa_B^6\kappa_P^{20}\de_0}\right)^2,
	\end{equation}
	for some $\de_0<1$ and $\kappa,\kappa_1,\kappa_A,\kappa_B,\kappa_P$ are as in \eqref{def:kappa}--\eqref{def:kappa-P}. Then we have
	\begin{equation}\label{estimate:parabolic-h}
		\norm{h_{x}(t,\cdot)}_{L^1}\leq \frac{2\kappa\kappa_1^2\kappa_P^2\de_0}{\sqrt{t}}.
	\end{equation}
\end{proposition}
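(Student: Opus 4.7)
The plan is to mirror the structure of the proof of Proposition \ref{prop:parabolic}, replacing the role of $u_x$ by $h$ and regarding $u$ as a known coefficient whose derivatives are already controlled by Proposition \ref{prop:parabolic}. First I would differentiate the equation \eqref{eqn:h} and perform the change of variable $w = P(u)h$, followed by the rescaling of Lemma \ref{lemma:transformation} to obtain $\tilde{w}_i(t,X_i(t,x)) = w_i(t,x)$. After collecting the commutator terms as in the derivation leading to \eqref{eqn-tilde-v}, this produces a system of the form
\begin{equation*}
\tilde{w}_t + A_2^* \tilde{w}_x = \tilde{w}_{xx} + \mathcal{T}(\tilde{\mathcal{R}}),
\end{equation*}
where $\tilde{\mathcal{R}}$ is a remainder analogous to \eqref{def:remainder-R}, depending linearly on $h, h_x$ and on the now-known quantities $u, u_x, u_{xx}$.

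Next I would differentiate once in $x$ and apply the Duhamel representation against the fundamental solution $G$ of $w_t + A_2^* w_x = w_{xx}$, starting from time $t/2$:
\begin{equation*}
\tilde{w}_x(t) = G_x(t/2) \star \tilde{w}(t/2) + \int_{t/2}^t G_x(t-s) \star \mathcal{T}(\tilde{\mathcal{R}})(s)\, ds.
\end{equation*}
Taking $L^1$ norms and using $\|G_x(t)\|_{L^1} \le \kappa/\sqrt{t}$ from \eqref{def:kappa}, the leading boundary term contributes $\sqrt{2}\kappa\kappa_1\kappa_P \delta_0 / \sqrt{t}$ thanks to hypothesis \eqref{assumption:h-L1} and the norm equivalences of Lemma \ref{lemma:transformation}. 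For each summand in $\tilde{\mathcal{R}}$ one distinguishes: (i) terms linear in $h_x$ multiplied by $(u-u^*)$ or by $u_x$, which are bounded by $C\, \|u_x\|_{L^\infty}\, \|h_x\|_{L^1}$; (ii) terms linear in $h$ multiplied by $u_{xx}$, bounded by $C\, \|u_{xx}\|_{L^1}\, \|h\|_{L^\infty} \le C \delta_0 / \sqrt{s}$ thanks to Proposition \ref{prop:parabolic}; and (iii) cubic-type terms involving $u_x^2 h$, which are lower order. As in the previous proof I would run a bootstrap/continuity argument: assuming \eqref{estimate:parabolic-h} holds on $[0,t^*]$ with equality at $t=t^*$, the terms in (i) give contributions of order $\int_{t/2}^t (t-s)^{-1/2} s^{-1/2} ds \le C$, while (ii) gives $\int_{t/2}^t (t-s)^{-1/2} s^{-1/2} \delta_0 \, ds \le C\delta_0$, leading to a strict inequality at $t=t^*$ for $\delta_0 \le \hat{t}^{-1/2}$ as in \eqref{assumption:h-L1}, which is the desired contradiction.

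The main obstacle, just as for Proposition \ref{prop:parabolic}, is the precise bookkeeping of the constants so that all remainder terms absorb into a factor strictly smaller than $2\kappa \kappa_1 \kappa_P \delta_0 / \sqrt{t}$; the smallness of $\delta_0$ is what makes this feasible. A secondary subtlety is the presence in $\tilde{\mathcal{R}}$ of the time-dependent change-of-coordinate term $\sum_i \tilde{w}_{i,x} X_{i,t}$, which must be handled exactly as in the proof of Proposition \ref{prop:parabolic} using the bound $\|X_{i,t}\|_{L^\infty} \le 8\kappa_B^5 \kappa_A \kappa_P^2(\|v\|_{L^\infty} + \delta_0)$ together with the $L^\infty$ decay of $v$ coming from the previous proposition. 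Finally, once the a priori estimate is established for smooth data, a density argument extends it to BV data, concluding the proof.
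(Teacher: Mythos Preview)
Your proposal is essentially correct and follows the same route as the paper: the change of variable $g=P(u)h$, the rescaling $g\mapsto\tilde g$ via the maps $X_i$ of Lemma~\ref{lemma:transformation}, the Duhamel representation for $\tilde g_x$ starting at $t/2$, the bootstrap/contradiction argument, and the density step are exactly what the paper does. One small correction: you should \emph{not} differentiate \eqref{eqn:h} before changing variables; unlike in Proposition~\ref{prop:parabolic} (where one first differentiates the equation for $u$ to obtain an equation for $u_x$ and then sets $v=P(u)u_x$), here $h$ already plays the role of $u_x$, so one multiplies \eqref{eqn:h} directly by $P(u)$ and sets $g=P(u)h$ --- your subsequent formulas (in particular the Duhamel identity $\tilde w_x=G_x(t/2)\star\tilde w(t/2)+\dots$) are consistent with this, so the word ``differentiate'' appears to be a slip.
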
	
	
\begin{proof} Similar to the proof of Proposition \ref{prop:parabolic}, we argue by contradiction. First, we make a change of variable. We multiply \eqref{eqn:h} by $P$ and arrange the terms to obtain
	\begin{align*}
		(Ph)_t+PA(u)P^{-1}(Ph)_x&=PBP^{-1}(Ph)_{xx}+P(h\bullet B(u)u_{xx})+P(u_x\otimes h):D^2B(u)u_x\\
		&+u_x\bullet B(u)h_x+h_x\bullet B(u)u_x+u_t\bullet Ph+PA(u)P^{-1}(u_x\bullet Ph)\\
		&-P(h\bullet A(u)u_x).
	\end{align*}
We make a change of variable $g=P(u)h$. Then, if $A_1=PAP^{-1}$ and $B_1=\mbox{diag}(\mu_1(u),\cdots,\mu_n(u))=PBP^{-1}$, we have
\begin{align*}
	g_t+A_1g_x&=B_1g_{xx}+P(P^{-1}g\bullet B(u)u_{xx})+P(u_x\otimes (P^{-1}g)):D^2B(u)u_x\\
	&+u_x\bullet B(u)(u_x\bullet P^{-1}g+P^{-1}g_x)+(u_x\bullet P^{-1}g+P^{-1}g_x)\bullet B(u)u_x\\
	&+u_t\bullet P(P^{-1}g)+PA(u)P^{-1}(u_x\bullet P(P^{-1}g))-P((P^{-1}g)\bullet A(u)u_x).
\end{align*}
By applying \eqref{eqn-main} we get
\begin{align*}
	g_t+A_1g_x&=B_1g_{xx}+P(P^{-1}g\bullet B(u)u_{xx})+P(u_x\otimes (P^{-1}g)):D^2B(u)u_x\\
	&+u_x\bullet B(u)(u_x\bullet P^{-1}g+P^{-1}g_x)+(u_x\bullet P^{-1}g+P^{-1}g_x)\bullet B(u)u_x\\
	&+(Bu_{xx}+u_x\bullet Bu_x-Au_x)\bullet P(P^{-1}g)+PA(u)P^{-1}(u_x\bullet P(P^{-1}g))\\
	&-P((P^{-1}g)\bullet A(u)u_x).
\end{align*}
We write
	\begin{equation*}
		g_t+A_1^*g_x=B_1(u)g_{xx}+\mathcal{R}_1,
	\end{equation*}
	where $\mathcal{R}_1=\mathcal{R}_1(u,u_x,u_{xx},g)$ is defined as follows
	\begin{align}
		\mathcal{R}_1(u,u_x,u_{xx},g)&:=[A_1^*-A_1(u)]g_x+P(P^{-1}g\bullet B(u)u_{xx})\nonumber\\
		&+P(u_x\otimes (P^{-1}g)):D^2B(u)u_x+u_x\bullet B(u)(u_x\bullet P^{-1}g+P^{-1}g_x)\nonumber\\
		&+(u_x\bullet P^{-1}g+P^{-1}g_x)\bullet B(u)u_x+(Bu_{xx}+u_x\bullet Bu_x-Au_x)\bullet P(P^{-1}g)\nonumber\\
		&+PA(u)P^{-1}(u_x\bullet P(P^{-1}g))-P((P^{-1}g)\bullet A(u)u_x).\label{def:remainder-R-1}
	\end{align}
Next, similar to Proposition \ref{prop:parabolic} we do a change of variable $g\mapsto \tilde{g}$ such that $g(t,x)=(\tilde{g}_i(t,X_i(t,x)))$ where $ (X_i)_x=\frac{1}{\sqrt{\mu_i(u)}}$.  We observe that
	\begin{equation*}
		g_t=\tilde{g}_t+\sum\limits_{i}\tilde{g}_{i,x}X_{i,t},\quad g_x=B_1^{-1/2}\tilde{g}_{x}\quad\mbox{and }g_{xx}=B_1^{-1}\tilde{g}_{xx}+u_x\bullet B_1^{-1/2}\tilde{g}_{x}.
	\end{equation*}
	Then we have
	\begin{align}
		\tilde{g}_t+A_2^*\tilde{g}_x&=\tilde{g}_{xx}+\mathcal{T}([A_2^*-A^*_1B_1^{-1/2}(u)]\tilde{g}_x+B_1(u_x\bullet B_1^{-1/2}\tilde{g}_{x}))\nonumber\\
		&-\sum\limits_{i}\mathcal{T}(\tilde{g}_{i,x}X_{i,t})+ \mathcal{T}(\mathcal{R}_1),\label{eqn-tilde-g}
	\end{align}
	where $A_2^*=A_1^*B_1^{-1/2}(u^*)$. 
	
	We first prove \eqref{estimate:parabolic-h} for smooth initial data.  We argue by contradiction. To this end, first we assume that the conclusion \eqref{estimate:parabolic-h} does not hold. Due to the assumption of smoothness of initial data, solution is smooth up to a small time and due to the continuity we can assume that there exists a time $t^*$ such that \eqref{estimate:parabolic-h} holds for $t\in[0,t^*]$ and equality attains at $t=t^*$. From Proposition \ref{prop:parabolic} we recall
	\begin{equation*}
		\norm{X_{i,t}(t,\cdot)}_{L^\f(\R)}\leq 6\kappa_B^5\kappa_A[\norm{u_x}_{L^\f}+\de_0]\mbox{ for }t\in[0,t^*].
	\end{equation*}
	From \eqref{eqn-tilde-g} we can write
	\begin{align*}
		\tilde{g}_{x}&=G_x(t/2)\star \tilde{g}(t/2)+\int\limits_{t/2}^{t}G_x(t-s)\star \Big\{\mathcal{T}([A_2^*-A^*_1B_1^{-1/2}(u)]\tilde{g}_x\\
		&\quad\quad\quad+B_1(u_x\bullet B_1^{-1/2}\tilde{g}_{x}))-\sum\limits_{i}\mathcal{T}(\tilde{g}_{i,x}X_{i,t})+ \mathcal{T}(\mathcal{R}_1)\Big\}\,ds.
	\end{align*}
	Using our previous observation, we have
	\begin{align*}
		\norm{\tilde{g}_{x}(t)}_{L^1}&\leq \norm{G_x(t/2)}_{L^1}\norm{\tilde{g}(t/2)}_{L^1}+\int\limits_{t/2}^{t}\norm{G_x(t-s)}_{L^1}\norm{\mathcal{T}([A_2^*-A^*_1B_1^{-1/2}(u)]\tilde{g}_x)}_{L^1}\,ds\\
		&+\int\limits_{t/2}^{t}\norm{G_x(t-s)}_{L^1}\norm{\mathcal{T}(B_1(u_x\bullet B_1^{-1/2}\tilde{g}_{x}))-\sum\limits_{i}\mathcal{T}(\tilde{g}_{i,x}X_{i,t})+ \mathcal{T}(\mathcal{R}_1)}_{L^1}\,ds\\
		&\leq \frac{2\kappa\de_0}{\sqrt{t}}+\int\limits_{t/2}^{t}\frac{\kappa\kappa_1}{\sqrt{t-s}}\norm{[A_2^*-A^*_1B_1^{-1/2}(u)]\tilde{g}_x}_{L^1}\,ds\\
		&+\int\limits_{t/2}^{t}\frac{\kappa\kappa_1}{\sqrt{t-s}}\norm{ B_1(u_x\bullet B_1^{-1/2}\tilde{g}_{x})-\sum\limits_{i}\tilde{g}_{i,x}X_{i,t}+\mathcal{R}_1}_{L^1}\,ds.	
	\end{align*}
	We note that
	\begin{align*}
		\norm{[A_2^*-A^*_1B_1^{-1/2}(u)]\tilde{g}_x}_{L^1}&\leq \kappa_A\kappa_B\norm{u-u^*}_{L^\f}\norm{\tilde{g}_x}_{L^1}\\
		&\leq \kappa_A\kappa_B\kappa_P\kappa_1\norm{u-u^*}_{L^\f}\norm{h_{x}}_{L^1},\\
		\norm{B_1(u_x\bullet B_1^{-1/2})\tilde{g}_{x}}_{L^1}&\leq \kappa_B^2\kappa_P\norm{u_x}_{L^\f}\norm{\tilde{g}_x}_{L^1}\\
		&\leq \kappa_B^2\kappa^3_P\kappa_1\norm{u_x}_{L^\f}[\norm{h_{x}}_{L^1}+\norm{u_x}_{L^1}\norm{h}_{L^\f}]\\
		&\leq 2\kappa_B^2\kappa^3_P\kappa_1(\norm{u_x}_{L^\f}+\de_0)\norm{h_{x}}_{L^1},\\
		\norm{-\sum\limits_{i}\tilde{g}_{i,x}X_{i,t}}_{L^1}&\leq \sum\limits_{i}\norm{\tilde{g}_{i,x}}_{L^1}\norm{X_{i,t}}_{L^\f}\\
		&\leq6\kappa_B^5\kappa_A\sum\limits_{i}\norm{\tilde{g}_{i,x}}_{L^1}[\norm{u_x}_{L^\f}+\de_0]\\
		&\leq 6\kappa_B^5\kappa_A\kappa_1\kappa^2_P\norm{h_{x}}[\norm{u_x}_{L^\f}+\de_0].
	\end{align*}
	From \eqref{def:remainder-R-1} we get
	\begin{align*}
		\norm{\mathcal{R}_1}_{L^1}&\leq 30\kappa_B^5\kappa_P^6\left[\norm{u_x}_{L^1}\norm{g_x}_{L^1}+\norm{u_{xx}}^2_{L^1}\norm{g}_{L^1}+\norm{u_{xx}}_{L^1}(\norm{g_x}_{L^1}+\norm{g}_{L^1})\right]\\
		&\leq 30\kappa_B^5\kappa_P^8\left[\norm{u_x}_{L^1}\norm{h_x}_{L^1}+\norm{u_{xx}}^2_{L^1}\norm{h}_{L^1}+\norm{u_{xx}}_{L^1}(\norm{h_x}_{L^1}+\norm{h}_{L^1})\right].
	\end{align*}
	Therefore, we get
	\begin{align*}
		\norm{\tilde{g}_{x}(t)}_{L^1}
		&\leq \frac{\sqrt{2}\kappa\kappa_1\kappa_P\de_0}{\sqrt{t}}+600\kappa_1^6\kappa^3\kappa_A\kappa_B^7\kappa_P^{12}\int\limits_{t/2}^{t}\frac{1}{\sqrt{t-s}}\left[\frac{\de_0^2}{s}+\frac{\de_0^2}{\sqrt{s}}\right]\,ds\\
		&< \frac{2\kappa\kappa_1\kappa_P\de_0}{\sqrt{t}},
	\end{align*}
	which implies
	\begin{equation}
		\norm{h_{x}(t,\cdot)}_{L^1}<\frac{2\kappa\kappa^2_1\kappa^2_P\de_0}{\sqrt{t}}.
	\end{equation}
  This contradicts with the assumption that equality holds in \eqref{estimate:parabolic-h} at $t=t^*$. Hence, by density argument we conclude that \eqref{estimate:parabolic-h} holds for $L^1$ initial data as well.	
\end{proof}	
	Now, we can conclude the bound on $\norm{h_x(t)}_{L^1}$ in terms of $\norm{h_0}_{L^1}$. More precisely, we prove the following result.
\begin{corollary}\label{corollary:h}
	Let $u$ be as in Proposition \ref{prop:parabolic}. Let $h$ be a solution to the equation \eqref{eqn:h} satisfying 
	\begin{equation}\label{assumption:h-2}
		\norm{h(t,\cdot)}_{L^1}\leq \frac{\kappa\norm{h_0}_{L^1}}{2}\leq \de_0\mbox{ for all }t\in[0,\hat{t}]\mbox{ where }\hat{t}:=\left(\frac{1}{2000\kappa_1^4\kappa_A^3\kappa_B^6\kappa_P^{20}\de_0}\right)^2,
	\end{equation}
	for some $\de_0<1$ and $\kappa,\kappa_1,\kappa_A,\kappa_B,\kappa_P$ are as in \eqref{def:kappa}--\eqref{def:kappa-P}. Then we have
	\begin{align}
		\norm{h_{x}(t,\cdot)}_{L^1}&\leq \frac{\kappa^2\kappa_1^2\kappa_P^2\norm{h_0}_{L^1}}{\sqrt{t}}\mbox{ for }t\in(0,\hat{t}].
	\end{align}
\end{corollary}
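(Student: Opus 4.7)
The plan is to deduce the corollary from Proposition \ref{prop:parabolic-h} by exploiting the linearity of the equation \eqref{eqn:h} in $h$. Note that the coefficients appearing on the right-hand side of \eqref{eqn:h}, namely $A(u)$, $B(u)$, $D^2 B(u)$, $u_x$ and $u_{xx}$, depend only on the (fixed) background solution $u$ and not on the perturbation $h$ itself. Hence \eqref{eqn:h} is a linear equation in $h$, and if $h$ is a solution then so is $\alpha h$ for any constant $\alpha \in \R$.

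First, I would introduce the rescaling $\tilde{h} := \alpha h$ with $\alpha := \dfrac{2\delta_0}{\kappa \norm{h_0}_{L^1}}$, chosen precisely so that the $L^1$ bound on $\tilde h$ matches the threshold $\delta_0$ required by Proposition \ref{prop:parabolic-h}. Indeed, from the hypothesis \eqref{assumption:h-2},
\begin{equation*}
\norm{\tilde{h}(t,\cdot)}_{L^1} = \alpha \norm{h(t,\cdot)}_{L^1} \leq \frac{2\delta_0}{\kappa \norm{h_0}_{L^1}} \cdot \frac{\kappa \norm{h_0}_{L^1}}{2} = \delta_0
\end{equation*}
for every $t \in [0,\hat{t}]$. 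Moreover the initial datum $\tilde{h}_0 = \alpha h_0$ automatically satisfies the smallness condition since $\alpha\norm{h_0}_{L^1}=2\delta_0/\kappa$, and by the linearity of \eqref{eqn:h} together with the same background $u$ (which is still the one from Proposition \ref{prop:parabolic}), $\tilde h$ solves \eqref{eqn:h}.

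Next, I would apply Proposition \ref{prop:parabolic-h} directly to $\tilde{h}$, obtaining
\begin{equation*}
\norm{\tilde{h}_x(t,\cdot)}_{L^1} \leq \frac{2\kappa \kappa_1^2 \kappa_P^2 \delta_0}{\sqrt{t}} \quad \text{for } t \in (0, \hat{t}].
\end{equation*}
Finally, undoing the rescaling gives $\norm{h_x(t,\cdot)}_{L^1} = \alpha^{-1}\norm{\tilde h_x(t,\cdot)}_{L^1}$, so
\begin{equation*}
\norm{h_x(t,\cdot)}_{L^1} \leq \frac{\kappa \norm{h_0}_{L^1}}{2\delta_0} \cdot \frac{2\kappa \kappa_1^2 \kappa_P^2 \delta_0}{\sqrt{t}} = \frac{\kappa^2 \kappa_1^2 \kappa_P^2 \norm{h_0}_{L^1}}{\sqrt{t}},
\end{equation*}
which is the claimed bound. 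There is no genuine obstacle here: the only thing to verify carefully is that the rescaling parameter $\alpha$ is admissible (i.e.\ that $\alpha\norm{h(t)}_{L^1}$ lands in the regime where Proposition \ref{prop:parabolic-h} applies), and this is exactly what the hypothesis \eqref{assumption:h-2} is designed to guarantee.
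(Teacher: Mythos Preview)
Your argument is correct. The paper does not give an explicit proof of this corollary, treating it as an immediate consequence of Proposition~\ref{prop:parabolic-h}; your rescaling argument, which exploits the linearity of \eqref{eqn:h} in $h$ to pass from the $\de_0$-bound to the $\norm{h_0}_{L^1}$-bound, is precisely the natural way to fill in this step. The only minor caveat is the degenerate case $h_0=0$, where $\alpha$ is undefined, but then $h\equiv 0$ by linear uniqueness and the conclusion is trivial.
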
	
\begin{proposition}
Let $T\geq \hat{t}$. Let $u$ and $h$ be the solutions of \eqref{eqn-main} and \eqref{eqn:h} on $[0,T]$ such that that $\| u_x(t,\cdot)\|_{L^1}\leq \delta_0$ and $\|h(t,\cdot)\|_{L^1}\leq\delta_0$ for any $t\in[0,T]$ then we get
\begin{equation}
\begin{cases}
\begin{aligned}
&\|u_{xx}(t,\cdot)\|_{L^1}=O(1)\delta_0^2\;\;\forall t\in[\hat{t},T],\\
&\|h_{x}(t,\cdot)\|_{L^1}=O(1)\delta_0^2\;\;\forall t\in[\hat{t},T].
\end{aligned}
\end{cases}
\label{estimr}
\end{equation}
\label{proplong}
\end{proposition}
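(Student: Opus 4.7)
The strategy is a one-step bootstrap that exploits time-translation invariance of \eqref{eqn-main} and \eqref{eqn:h} together with the arithmetical observation that the threshold time in \eqref{assumption:u-L1} satisfies
\begin{equation*}
\frac{1}{\sqrt{\hat t}} = 2000\,\kappa_1^4\kappa_A^3\kappa_B^6\kappa_P^{20}\,\delta_0.
\end{equation*}
Consequently, any bound of the form $C\delta_0/\sqrt{s}$ evaluated at $s=\hat t$ collapses to $O(1)\delta_0^2$, which is exactly the decay announced in \eqref{estimr}. The short-time results are thus sharp enough to feed themselves once we shift the origin of time.

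Fix $t\in[\hat t,T]$ and set $t_0:=t-\hat t\geq 0$. Define $\tilde u(s,x):=u(t_0+s,x)$ and $\tilde h(s,x):=h(t_0+s,x)$ on $s\in[0,\hat t]$. Since \eqref{eqn-main} and \eqref{eqn:h} are autonomous, $(\tilde u,\tilde h)$ solves the same system with new initial data $(u(t_0,\cdot),h(t_0,\cdot))$. The global hypotheses give $\|\tilde u_x(s)\|_{L^1}\leq\delta_0$ and $\|\tilde h(s)\|_{L^1}\leq\delta_0$ on $[0,\hat t]$, which match the smallness conditions \eqref{assumption:u-L1} and \eqref{assumption:h-L1}. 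Applying Propositions~\ref{prop:parabolic} and \ref{prop:parabolic-h} to $(\tilde u,\tilde h)$ and evaluating the conclusion at $s=\hat t$ gives
\begin{equation*}
\|u_{xx}(t,\cdot)\|_{L^1}\leq\frac{2\kappa\kappa_1^2\kappa_P^2\delta_0}{\sqrt{\hat t}} = 4000\,\kappa\,\kappa_1^6\kappa_A^3\kappa_B^6\kappa_P^{22}\,\delta_0^2,
\end{equation*}
and the analogous inequality for $\|h_x(t,\cdot)\|_{L^1}$. Both bounds are $O(1)\delta_0^2$ with constants depending only on the fixed compact neighborhood of the reference state $u^*$.

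The only genuine technical point is to verify that the smallness assumptions really do propagate to each restart time $t_0\in[0,T-\hat t]$. This is built into the statement of the present proposition through the global a priori bounds $\|u_x(t,\cdot)\|_{L^1}\leq\delta_0$ and $\|h(t,\cdot)\|_{L^1}\leq\delta_0$ on $[0,T]$; in particular, $u$ remains in a fixed compact subset of $\mathcal U$ so that the structural constants $\kappa_A,\kappa_B,\kappa_P$ are unaffected by the shift. No smoothness is lost at the restart either, since parabolic regularization from $[0,t_0]$ renders $u(t_0,\cdot),h(t_0,\cdot)$ smooth whenever $t_0>0$, and the $t_0=0$ case is already covered by the short-time estimates themselves. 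The main delicate ingredient, then, is not in this proof but in its prerequisites: the global control $\|u_x(t,\cdot)\|_{L^1}\leq\delta_0$ needed to trigger the bootstrap must be proved elsewhere via the BV estimates of Section~\ref{sec:BV}, and similarly $\|h(t,\cdot)\|_{L^1}\leq\delta_0$ must follow from the $L^1$ stability analysis of Section~\ref{sec:stability}.
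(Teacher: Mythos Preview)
Your proposal is correct and matches the paper's own treatment: the paper's proof consists of the single sentence ``The proof follows exactly the same lines as the previous propositions,'' and your time-translation bootstrap is precisely a clean formalization of that remark, re-applying Propositions~\ref{prop:parabolic} and~\ref{prop:parabolic-h} on the shifted window $[t-\hat t,\,t]$ and using $1/\sqrt{\hat t}=O(1)\de_0$ to convert the $\de_0/\sqrt{s}$ bound into $O(1)\de_0^2$.
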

\noi The proof follows exactly the same lines as the previous propositions.
	Next result guarantees the assumptions \eqref{assumption:u-L1}, \eqref{assumption:h-L1} as in Proposition \ref{prop:parabolic} and \ref{prop:parabolic-h} respectively. 
\begin{proposition}\label{prop:local-existence}
	Let $u$ and $h$ be solutions of \eqref{eqn-main} and \eqref{eqn:h} respectively such that
	\begin{equation*}
		TV(u_0)\leq \frac{\de_0}{4\kappa}\mbox{ and }\norm{h_0}_{L^1}\leq \frac{\de_0}{4\kappa}.
	\end{equation*}
   Then $u,h$ are well-defined on $[0,\hat{t}]$ where $\hat{t}$ is defined as in \eqref{assumption:u-L1}. Moreover, we have
   \begin{equation}\label{local-estimate-u-h}
   	\norm{u_x(t)}_{L^1}\leq \frac{\de_0}{2}\mbox{ and }\norm{h(t)}_{L^1}\leq 2\kappa \norm{h_0}_{L^1}\mbox{ for }t\in[0,\hat{t}].
   \end{equation}
\end{proposition}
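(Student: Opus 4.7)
The plan is a standard continuation/bootstrap argument combining short-time parabolic well-posedness with the smoothing estimates of Propositions \ref{prop:parabolic}--\ref{prop:parabolic-h}. Since $B(u)\geq c_0\mathbb{I}_n$ and the coefficients are $C^2$ in a neighbourhood of $\bar u$, the system \eqref{eqn-main} is uniformly parabolic there, so a Banach fixed-point argument applied to the mild formulation
\[
\tilde v(t) = G(t)\star \tilde v(0) + \int_0^t G(t-s)\star \mathcal{F}(u,v,v_x)(s)\,ds
\]
coming from \eqref{eqn-tilde-v}, in a small ball of $C([0,T_0]; L^1(\R))$, yields a unique smooth local solution $u$ for some $T_0>0$. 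Because \eqref{eqn:h} is linear in $h$ with $C^2$ coefficients that depend continuously on $t$ once $u$ is smooth, a parallel contraction for $\tilde g = P(u)h$ from \eqref{eqn-tilde-g} gives a unique local $h$ on the same interval.

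I would then close the estimate by a bootstrap. Define
\[
T^* := \sup\bigl\{T \in (0,\hat t]\,:\; \norm{u_x(t)}_{L^1}\leq \delta_0 \text{ and } \norm{h(t)}_{L^1}\leq \delta_0 \text{ for all } t\in[0,T]\bigr\},
\]
which is positive by continuity and the hypotheses $TV(u_0), \norm{h_0}_{L^1}\leq \delta_0/(4\kappa)<\delta_0$. On $[0,T^*]$ Propositions \ref{prop:parabolic}--\ref{prop:parabolic-h} apply and yield $\norm{u_{xx}(s)}_{L^1}+\norm{h_x(s)}_{L^1}\lesssim \delta_0/\sqrt s$. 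Taking $L^1$-norms in the Duhamel formula above and using $\norm{G(t)}_{L^1}\leq \kappa$,
\[
\norm{\tilde v(t)}_{L^1}\leq \kappa \norm{\tilde v(0)}_{L^1} + \kappa \int_0^t \norm{\mathcal{F}(s)}_{L^1}\, ds.
\]
The integrand was already estimated in the proof of Proposition \ref{prop:parabolic} by $C\kappa_1^6\kappa_A\kappa_B^7\kappa_P^{12}\delta_0^2/\sqrt s$, so that $\int_0^t(\cdot)\,ds\leq C\delta_0^2\sqrt t$. Combining with $\kappa\norm{\tilde v(0)}_{L^1}\leq \kappa_1\kappa_P\delta_0/4$, converting back via \eqref{inequality-T-1}, and using the explicit value of $\hat t$ in \eqref{assumption:u-L1}, the integral contribution is majorised by $\delta_0/8$, whence $\norm{u_x(t)}_{L^1}\leq \delta_0/2$. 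The analogous Duhamel formula for $\tilde g$ paired with the parabolic bound $\norm{h_x(s)}_{L^1}\leq 2\kappa\kappa_1^2\kappa_P^2\delta_0/\sqrt s$ inside \eqref{def:remainder-R-1} gives $\norm{h(t)}_{L^1}\leq \kappa\norm{h_0}_{L^1} + C\delta_0\norm{h_0}_{L^1}\sqrt{\hat t}\leq 2\kappa\norm{h_0}_{L^1}$. Since both inequalities at $t=T^*$ are strictly better than $\delta_0$, continuity forces $T^*=\hat t$.

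The main obstacle is the self-referential character of the bootstrap: the smoothing estimates of Propositions \ref{prop:parabolic}--\ref{prop:parabolic-h} presuppose the bound $\norm{u_x}_{L^1}\leq \delta_0$ that is being proved, and the source terms $\mathcal{R},\mathcal{R}_1$ in \eqref{def:remainder-R} and \eqref{def:remainder-R-1} inherit the $1/\sqrt s$ blow-up of $\norm{u_{xx}(s)}_{L^1}$ and $\norm{h_x(s)}_{L^1}$ near $s=0$. The loop closes because this singularity is square-integrable in time, contributing only an $O(\delta_0^2\sqrt t)$ correction; the explicit value of $\hat t$ in \eqref{assumption:u-L1} is precisely the quantitative balance that keeps this correction dominated by the $\delta_0/4$ slack left by the small-data hypothesis $TV(u_0)\leq \delta_0/(4\kappa)$.
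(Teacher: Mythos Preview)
Your overall scheme — local well-posedness, then a continuity/bootstrap argument feeding Propositions \ref{prop:parabolic}--\ref{prop:parabolic-h} back into a Duhamel representation — is exactly the paper's strategy. The gap is in the choice of representation: you run Duhamel on the transformed quantity $\tilde v=\mathcal{T}(P(u)u_x)$ from \eqref{eqn-tilde-v}, whereas the paper runs it on $u_x$ itself, linearising at the constant state $u^*$:
\[
u_t+A(u^*)u_x=B(u^*)u_{xx}+(B(u)-B(u^*))u_{xx}+(A(u^*)-A(u))u_x+u_x\bullet B u_x .
\]
The difference is not cosmetic. In your formulation the leading (zero-order) term after Duhamel is $\kappa\norm{\tilde v(0)}_{L^1}$, and to compare with $\norm{u_x(t)}_{L^1}$ you must convert back through $P^{-1}$ and $\mathcal{T}^{-1}$. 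By \eqref{inequality-T-1} and the definition of $\kappa_P$ the round trip costs a multiplicative factor $\kappa_P^2(m/c_0)^{1/2}\geq 1$ (indeed $\geq \kappa_1^2\kappa_P^2/4$ in the paper's notation, since $\kappa_1\geq 2$). Thus from $TV(u_0)\leq \delta_0/(4\kappa)$ your leading term is only bounded by $\kappa_P^2(m/c_0)^{1/2}\,\delta_0/4$, which in general exceeds $\delta_0/2$; no choice of $\hat t$ helps, because this term is independent of $t$. The same loss occurs for $h$ via $\tilde g$.

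The paper avoids this by never leaving the original variable: the leading term is $\norm{G(\tau)\star u_{0,x}}_{L^1}\leq \kappa\norm{u_{0,x}}_{L^1}\leq \delta_0/4$, with no transformation loss. The remainder is then handled by hitting $(B(u)-B(u^*))u_{xx}$, $(A(u^*)-A(u))u_x$ and $u_x\bullet B u_x$ with $G_x(\tau-s)$ and invoking Proposition \ref{prop:parabolic} for $\norm{u_{xx}(s)}_{L^1}\leq 2\kappa\kappa_1^2\kappa_P^2\delta_0/\sqrt{s}$; the resulting integrals are $O(\delta_0^2)$ and $O(\delta_0^2\sqrt{\hat t})$, both absorbed by the explicit $\hat t$ and smallness of $\delta_0$. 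For $h$ the paper likewise linearises \eqref{eqn:h} at $u^*$ and rewrites the viscous source in divergence form, $((B(u)-B(u^*))h_x)_x+(h\bullet B(u)u_x)_x$, so that $G_x$ can be used there and $G$ on the zero-order drift terms; Corollary \ref{corollary:h} then supplies $\norm{h_x(s)}_{L^1}\lesssim \norm{h_0}_{L^1}/\sqrt{s}$. To repair your argument, replace the $\tilde v,\tilde g$ Duhamel formulas by these constant-coefficient ones for $u_x$ and $h$.
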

\begin{proof}[Proof of Proposition \ref{prop:local-existence}:]
	Suppose that there exists a time $\tau<\hat{t}$ such that $\norm{u_x(\tau)}_{L^1}= \frac{\de_0}{2}$ and $\norm{u_x(t)}_{L^1}< \frac{\de_0}{2}$ for all $t\in [0,\tau]$. From \eqref{eqn-main}, we can write
	\begin{equation*}
		u_t+A(u^*)u_x=B(u^*)u_{xx}+(B(u)-B(u^*))u_{xx}+(A(u^*)-A(u))u_x+u_x\bullet Bu_x.
	\end{equation*}
Hence, we get
\begin{align*}
	u_x(\tau)&=G(\tau)\star u_{0,x}+\int\limits_{0}^{\tau}G_x(\tau-s)\star\Big\{(B(u)-B(u^*))u_{xx}\\
	&\quad\quad\quad\quad\quad\quad\quad\quad\quad\quad\quad\quad+(A(u^*)-A(u))u_x+u_x\bullet Bu_x\Big\}\,ds.
\end{align*}
Therefore,
\begin{align*}
	\norm{u_x(\tau)}_{L^1}&\leq \kappa\norm{u_{0,x}}_{L^1}+\int\limits_{0}^{\tau}\frac{\kappa}{\sqrt{\tau-s}}\norm{B(u)-B(u^*)}_{L^\f}\norm{u_{xx}}_{L^1}\,ds\\
	&+\int\limits_{0}^{\tau}\frac{\kappa}{\sqrt{\tau-s}}\left[\norm{A(u)-A(u^*)}_{L^\f}\norm{u_{x}}_{L^1}+\kappa_B\norm{u_x}_{L^\f}\norm{u_x}_{L^1}\right]\,ds\\
	&\leq \kappa\norm{u_{0,x}}_{L^1}+\int\limits_{0}^{\tau}\frac{2\kappa\kappa_B}{\sqrt{\tau-s}}\norm{u_x}_{L^1}\norm{u_{xx}}_{L^1}\,ds+\int\limits_{0}^{\tau}\frac{\kappa\kappa_A}{\sqrt{\tau-s}}\norm{u_x}^2_{L^1}.
\end{align*}
By Proposition \ref{prop:parabolic} we get
\begin{equation*}
		\norm{u_{xx}(t,\cdot)}_{L^1}\leq \frac{2\kappa\kappa^2_1\kappa^2_P\de_0}{\sqrt{t}}\mbox{ for }t\in[0,\tau].
\end{equation*}
Hence, by the choice of $\de_0$
\begin{align*}
	\norm{u_x(\tau)}_{L^1}&\leq \frac{\de_0}{4}+2\int\limits_{0}^{\tau}\frac{\kappa}{\sqrt{\tau-s}}\frac{2\kappa\kappa^2_1\kappa^2_P\de_0}{\sqrt{s}}\frac{\kappa_B\de_0}{2}\,ds+\int\limits_{0}^{\tau}\frac{\kappa}{\sqrt{\tau-s}}\frac{\kappa_A\de^2_0}{4}\,ds\\
	&<\frac{\de_0}{2}.
\end{align*}
Hence, we obtain the first inequality in \eqref{local-estimate-u-h}. To get the estimate for $h$ we rewrite \eqref{eqn:h} as follows
\begin{align}
 h_t+ A(u^*)h_x&=B(u^*)h_{xx}+(B(u)-B(u^*))h_{xx}+(A(u^*)-A(u))h_x-h\bullet A(u)u_x\nonumber\\
 &+h\bullet B(u)u_{xx}+(u_x\otimes h):D^2B(u)u_x \nonumber\\
	&+u_x\bullet B(u)h_x+h_x\bullet B(u)u_x  \nonumber\\
	&=B(u^*)h_{xx}+((B(u)-B(u^*))h_x)_x+(h\bullet B(u)u_x)_x  \nonumber\\
	&+(A(u^*)-A(u))h_x-h\bullet A(u)u_x.\label{eqn-h-1}
\end{align}	
We claim that $\norm{h(t)}_{L^1}\leq 2\kappa\norm{h_0}_{L^1}$ for all $t\in[0,\hat{t}]$ where $\hat{t}$ is defined as in \eqref{assumption:u-L1}. Suppose that there exists time $\tau<\hat{t}$ such that $\norm{h(\tau)}_{L^1}=2\kappa\norm{h_0}_{L^1}$ and $\norm{h(t)}_{L^1}\leq 2\kappa\norm{h_0}_{L^1}$ for $t\in[0,\tau]$. From \eqref{eqn-h-1}, we can write
\begin{align*}
	h(\tau)&=G(\tau)\star h_{0}+\int\limits_{0}^{\tau}G_x(\tau-s)\star\Big\{(B(u)-B(u^*))h_x+h\bullet B(u)u_x\Big\}\,ds\\
	&\quad\quad\quad\quad\quad+\int\limits_{0}^{\tau}G(\tau-s)\star\Big\{(A(u^*)-A(u))h_x-h\bullet A(u)u_x\Big\}\,ds.
\end{align*}
Then it follows,
\begin{align*}
	\norm{h(\tau)}_{L^1}&\leq \norm{G(\tau)}_{L^1}\norm{h_0}_{L^1}+\int\limits_{0}^{\tau}\norm{G_x(\tau-s)}_{L^1}\norm{B(u(s))-B(u^*)}_{L^\f}\norm{h_x(s)}_{L^1}\,ds\\
	&+\int\limits_{0}^{\tau}\kappa_B\norm{G_x(\tau-s)}_{L^1}\norm{h(s)}_{L^1}\norm{u_x}_{L^\f}\,ds\\
	&+\int\limits_{0}^{\tau}\norm{G(\tau-s)}_{L^1}\norm{A(u(s))-A(u^*)}_{L^\f}\norm{h_x(s)}_{L^1}\,ds\\
	&+\int\limits_{0}^{\tau}\kappa_A\norm{G(\tau-s)}_{L^1}\norm{h(s)}_{L^1}\norm{u_x(s)}_{L^\f}\,ds\\
	&\leq \kappa \norm{h_0}_{L^1}+\int\limits_{0}^{\tau}\frac{\kappa}{\sqrt{\tau-s}}\frac{\kappa_B\de_0}{2}\frac{\kappa^2\kappa_1^2\kappa_P^2\norm{h_0}}{\sqrt{s}}\,ds\\
	&+2\kappa\norm{h_0}_{L^1}\int\limits_{0}^{\tau}\frac{\kappa_B\kappa}{\sqrt{\tau-s}}\frac{2\kappa\kappa^2_1\kappa^2_P\de_0}{\sqrt{s}}\,ds+\int\limits_{0}^{\tau}\kappa\frac{\kappa_A\de_0}{2}\frac{\kappa^2\kappa_1^2\kappa_P^2\norm{h_0}}{\sqrt{s}}\,ds\\
	&+2\kappa\norm{h_0}_{L^1}\int\limits_{0}^{\tau}\kappa_A\kappa\frac{2\kappa\kappa^2_1\kappa^2_P\de_0}{\sqrt{s}}\,ds\\
	&<2\kappa\norm{h_0}_{L^1}.
\end{align*}
This completes the proof of the last inequality of \eqref{local-estimate-u-h}.
\end{proof}

\section{Interaction estimates}
In this section, we prove the interaction estimates for two different family, what is called as `transversal interaction' in \cite{BiB-vv-lim}. This has been previously studied in \cite{BiB-vv-lim,BiB-triangular,BiB-temple-class} for constant viscosity coefficient. We adapt the results of \cite{BiB-temple-class,BiB-vv-lim} in our set up when viscosity coefficient is depending on space and time. 
\begin{lemma}\label{lemma:transversal-1}
	Let $z,z^\#$ be solutions of the two independent scalar equations,
	\begin{align}
		z_t+(\la(t,x)z)_x-(\mu z)_{xx}&=\varphi(t,x),\label{eqn-z-1}\\
		z^\#_t+(\la^\#(t,x)z^\#)_x-(\mu^\# z^\#)_{xx}&=\varphi^\#(t,x),\label{eqn-z-2}
	\end{align}
	which is valid for $t\in[0,T]$. We assume that
	\begin{equation}\label{relqtion-la-12}
		\inf\limits_{t,x}\la^\#(t,x)-\sup\limits_{t,x}\la(t,x)\geq c>0.
	\end{equation}
	and $\norm{\mu,\mu^\#}_{L^\f}<\f$. Then we have
	\begin{equation}\label{est:transversal-1}
		\int\limits_{0}^{T}\int\limits_{\R}\abs{z(t,x)}\abs{z^\#(t,x)}\,dxdt\leq \frac{1}{c}E_1E_2,
	\end{equation}
	where $E_1,E_2$ are defined as follows
	\begin{align}
		E_1&:=\int\limits_{\R}\abs{z(0,x)}\,dx+\int\limits_{0}^{T}\int\limits_{\R}\abs{\varphi(t,x)}\,dxdt,\\
		E_2&:=\int\limits_{\R}\abs{z^\#(0,x)}\,dx+\int\limits_{0}^{T}\int\limits_{\R}\abs{\varphi^\#(t,x)}\,dxdt.
	\end{align}
\end{lemma}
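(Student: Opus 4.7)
The plan is to use Kato's inequality to reduce the two scalar parabolic equations to subsolution inequalities for $|z|$ and $|z^\#|$, then to track an interaction potential whose time derivative captures $\int|z||z^\#|$ up to bounded error terms, and exploit the speed-gap assumption \eqref{relqtion-la-12} to produce the $1/c$ factor. Since $\mu,\mu^\#\geq 0$ are bounded, Kato's inequality gives the distributional subsolution estimates
\[
|z|_t + (\lambda|z|)_x - (\mu|z|)_{xx} \leq |\varphi|,
\qquad
|z^\#|_t + (\lambda^\#|z^\#|)_x - (\mu^\#|z^\#|)_{xx} \leq |\varphi^\#|,
\]
and integrating in $x$ yields $\|z(t)\|_{L^1}\leq E_1$ and $\|z^\#(t)\|_{L^1}\leq E_2$ for $t\in[0,T]$.

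Next I would introduce the interaction potential
\[
I(t) := \iint_{\mathbb{R}^2} \theta(y-x)\,|z(t,y)||z^\#(t,x)|\,dy\,dx,
\]
where $\theta$ is a smooth nondecreasing approximation of the Heaviside step with $0\leq \theta\leq 1$ and $\theta'\geq 0$; note $I(0)\leq E_1E_2$. Geometrically, $I$ weights configurations in which the slower wave $z$ (at position $y$) lies ahead of the faster wave $z^\#$ (at position $x$); by \eqref{relqtion-la-12}, $z^\#$ overtakes $z$ at relative speed at least $c$, so such configurations disappear over time, which is exactly what forces $\int|z||z^\#|$ to be integrable in $t$. Differentiating $I$, applying the two subsolution inequalities, and integrating by parts in the appropriate variable produces
\[
\frac{dI}{dt} \leq \iint \theta'(y-x)[\lambda(t,y)-\lambda^\#(t,x)]|z||z^\#|\,dy\,dx + D_\theta(t) + S_\theta(t),
\]
where $D_\theta$ collects the diffusive cross-terms (involving $\theta''$ and $\mu,\mu^\#$) and $S_\theta$ the source contribution; using $\theta'\geq 0$ and \eqref{relqtion-la-12}, the drift term is bounded above by $-c\iint\theta'(y-x)|z||z^\#|\,dy\,dx$.

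Integrating in $t$, dropping the nonnegative $I(T)$, and using the $L^1$ bounds above to control $I(0)\leq E_1E_2$ and $\int_0^T S_\theta\,dt \leq \Phi_1 E_2 + E_1\Phi_2$ (via Fubini, where $\Phi_1:=\int_0^T\int|\varphi|$, $\Phi_2:=\int_0^T\int|\varphi^\#|$) yields
\[
c\iint_{[0,T]\times\mathbb{R}^2}\theta'(y-x)|z||z^\#|\,dy\,dx\,dt \leq C\,E_1E_2 + \int_0^T D_\theta(t)\,dt.
\]
The main obstacle is controlling the diffusive cross-term $\int_0^T D_\theta\,dt$: it lacks a favorable sign, and in the limit where $\theta$ becomes the sharp indicator one picks up a boundary-like contribution on the diagonal $\{y=x\}$ involving $|z|_x$, which is only a signed measure. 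Adapting the Bianchini--Bressan strategy of \cite{BiB-vv-lim,BiB-temple-class} (which handled the identity-viscosity case), I would keep $\theta$ mollified throughout, integrate the $\theta''$ piece by parts once in the spatial variable, and exploit the $L^\infty$ bounds $\|\mu,\mu^\#\|_{L^\infty}<\infty$ together with the $L^1$ bounds on $|z|,|z^\#|$ to obtain $\int_0^T D_\theta\,dt \leq C'E_1E_2$ uniformly in the mollification. Passing $\theta'\to\delta$ in the Dirac sense then delivers the desired bound \eqref{est:transversal-1}, with the stated $1/c$ factor arising from the transversality gap.
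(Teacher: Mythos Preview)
Your overall architecture---Kato inequality for $|z|,|z^\#|$, then an interaction potential of the form $\iint K(\cdot)\,|z||z^\#|$---is exactly right, and your drift computation is correct: that is where the factor $1/c$ comes from. The gap is in the treatment of the diffusive cross-term $D_\theta$. With $\theta$ a mollified Heaviside, $\theta''$ has vanishing mean but $\int|\theta''|$ blows up as the mollification sharpens, so the bound
\[
\Bigl|\iint \theta''(y-x)\,[\mu(t,y)+\mu^\#(t,x)]\,|z(t,y)||z^\#(t,x)|\,dy\,dx\Bigr|\;\leq\;2c_1\!\iint |\theta''|\,|z||z^\#|
\]
is not uniform. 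Integrating by parts once, as you propose, trades $\theta''$ for $\theta'\cdot\partial_y(\mu|z|)$, but you then need a time-integrated bound on $\int |z|_y\,|z^\#|$, which is not available here (that is essentially the content of the \emph{next} lemma in the paper, and it requires extra hypotheses). So neither route closes with only the $L^1$ bounds $\|z\|_{L^1}\leq E_1$, $\|z^\#\|_{L^1}\leq E_2$.

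The paper avoids this by \emph{not} approximating the Heaviside. It takes a fixed kernel
\[
K(s)=\begin{cases}1/c,& s\geq 0,\\[1mm](1/c)\,e^{cs/(2c_1)},& s<0,\end{cases}
\qquad c_1:=\|\mu,\mu^\#\|_{L^\infty},
\]
which is continuous with $K'\ge 0$, has $K'=0$ on $(0,\infty)$, and has a jump in $K'$ at the origin. The point of the exponential tail is the distributional identity $cK'-2c_1K''=\delta_0$: for $s<0$ the smooth parts of $cK'$ and $2c_1K''$ cancel exactly, and the jump of $K'$ at $0$ produces the Dirac. Consequently the drift and diffusion contributions combine to
\[
\frac{d}{dt}Q(z,z^\#)\;\leq\;-\iint\bigl(cK'(x-y)-2c_1K''(x-y)\bigr)|z(x)||z^\#(y)|\,dx\,dy\;=\;-\int_{\R}|z(t,x)||z^\#(t,x)|\,dx,
\]
with no error term to control. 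The source terms are then handled by writing $z,z^\#$ via their Green functions (Duhamel) and applying the homogeneous estimate to each pair of fundamental solutions. The fix to your argument is therefore to replace the mollified step $\theta$ by this exponential-tailed $K$; no limit in a regularization parameter is needed.
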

\begin{proof}[Proof of Lemma \ref{lemma:transversal-1}:]
	Set $c_1:=\norm{\mu,\mu^\#}_{L^\f}$. Let $z,z^\#$ be the solution to \eqref{eqn-z-1}, \eqref{eqn-z-2} with $\varphi=\varphi^\#=0$. 
	Consider
	\begin{equation}
		Q(z,z^\#):=\int\int K(x-y)\abs{z(x)}\abs{z^\#(y)}\,dxdy,
	\end{equation}
	where $K$ is defined as follows
	\begin{equation}
		K(s):=\left\{\begin{array}{rl}
			1/c&\mbox{ if }s\geq 0,\\
			1/c e^{\frac{cs}{2c_1}}&\mbox{ if }s<0.
		\end{array}\right.
	\end{equation}
	Now, we can calculate
	\begin{align*}
		\frac{d}{dt}Q(z(t),z^\#(t))&=\int\int K(x-y)[sgn(z(x))z_t(x)\abs{z^\#(y)}+sgn(z^\#(y))z^\#_t(y)\abs{z(x)}]\,dxdy\\
		&=\int\int K(x-y)\bigg[sgn(z(x))((\mu(x) z(x))_{xx}-(\la z(x))_x)\abs{z^\#(y)}\\
		&\quad\quad+sgn(z^\#(y))((\mu^\#(y) z^\#(y))_{yy}-(\la^\# z^\#(y))_y)\abs{z(x)}\bigg]\,dxdy\\
		&=\int\int K^\p(x-y)\bigg\{\la\abs{z(x)}\abs{z^\#(y)}-\la^\#\abs{z(x)}\abs{z^\#(y)}\bigg\}\,dxdy\\
		&+\int\int K^{\p\p}(x-y)\bigg\{\mu(x)\abs{z(x)}\abs{z^\#(y)}+\mu^\#(y)\abs{z(x)}\abs{z^\#(y)}\bigg\}\,dxdy\\
		&\leq -\int\int (cK^\p(x-y)-2c_1K^{\p\p}(x-y))\abs{z(x)}\abs{z^\#(y)}\,dxdy\\
		&\leq -\int \abs{z(x)}\abs{z^\#(x)}\,dx.
	\end{align*}
	Hence, we get
	\begin{equation}\label{estimate-interaction-1}
		\int_{0}^T\int_{\R} \abs{z(t,x)}\abs{z^\#(t,x)}\,dxdt\leq Q(z(0),z^\#(0))\leq \frac{1}{c}\norm{z(0)}_{L^1}\norm{z^\#(0)}_{L^1}.
	\end{equation}
   Now, we consider $z,z^\#$ as solutions of \eqref{eqn-z-1}, \eqref{eqn-z-2} respectively when $\varphi$ and $\varphi^\#$ may not be identically $0$. Let $\Gamma, \Gamma^\#$ be the fundamental solutions corresponding to the homogeneous system of \eqref{eqn-z-1}--\eqref{eqn-z-2}. Then we can write
   \begin{align}
   	z(t,x)&=\int\limits_{\R}\Gamma(t,x,0,y)z(0,y)\,dy+\int\limits_{0}^{t}\int\limits_{\R}\Gamma(t,x,s,y)\varphi(s,y)\,dyds,\\
   	z^\#(t,x)&=\int\limits_{\R}\Gamma^\#(t,x,0,y)z^\#(0,y)\,dy+\int\limits_{0}^{t}\int\limits_{\R}\Gamma^\#(t,x,s,y)\varphi^\#(s,y)\,dyds,
   \end{align}
   Due to \eqref{estimate-interaction-1}, we have
   \begin{equation*}
   	\int\limits_{\max\{s_1,s_2\}}^{T}\int\limits_{\R}\Gamma(t,x,s_1,y_1)\Gamma^\#(t,x,s_2,y_2)\,dxdt\leq \frac{1}{c},
   \end{equation*}
for any two pairs $(s_1,y_1),(s_2,y_2)\in \R_+\times\R$. Hence, the estimate \eqref{est:transversal-1} follows.
\end{proof}

\begin{lemma}\label{lemma:transversal-2}
	Let $z,z^\#$ be solutions of \eqref{eqn-z-1}, \eqref{eqn-z-2} respectively and we assume that \eqref{relqtion-la-12} holds along with the following estimates
	\begin{align}
		&\int\limits_{0}^{T}\int\limits_{\R}\abs{\varphi(t,x)}dxdt\leq \de_0,\quad\quad 	\int\limits_{0}^{T}\int\limits_{\R}\abs{\varphi^\#(t,x)}dxdt\leq \de_0,\\
		&\norm{z(t)}_{L^1},\,\norm{z^\#(t)}_{L^1}\leq\de_0,\quad\quad \norm{z_x(t)}_{L^1},\,\norm{z^\#(t)}_{L^\f}\leq C_*\de^2_0,\\
		&\norm{\la_x(t)}_{L^\f},\,\norm{\la_x(t)}_{L^1}\leq C_*\de_0,\quad\quad \lim\limits_{x\rr-\f}\la(t,x)=0,
	\end{align}
	for all $t\in[0,T]$. Then we have
	\begin{equation}\label{est:transversal-2}
		\int\limits_{0}^{T}\int\limits_{\R}\abs{z_x(t,x)}\abs{z^\#(t,x)}\,dxdt=\mathcal{O}(1)\de^2_0.
	\end{equation}
\end{lemma}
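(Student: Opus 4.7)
The plan is to reduce the estimate on $\iint \abs{z_x}\abs{z^\#}\,dxdt$ to an application of Lemma \ref{lemma:transversal-1} to the pair $(w,z^\#)$ with $w:=z_x$, after first deriving the equation satisfied by $w$. Differentiating \eqref{eqn-z-1} in $x$ and regrouping, a direct computation gives
\begin{equation*}
w_t+(\la w)_x-(\mu w)_{xx}=\Phi_x,\qquad \Phi:=\varphi-\la_x z+\mu_{xx}z+\mu_x w,
\end{equation*}
so that the source enters as a pure spatial derivative; this is the structural gain that will let $\Phi$, rather than $\Phi_x$, appear in the final estimate.

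With this in hand, I would adapt the proof of Lemma \ref{lemma:transversal-1}. With the same kernel $K$, set $Q(t):=\iint K(x-y)\abs{w(t,x)}\abs{z^\#(t,y)}\,dxdy$. Differentiating in $t$ and using the Kruzhkov-type inequalities $\mathrm{sgn}(w)(\mu w)_{xx}\leq(\mu\abs{w})_{xx}$ and $\mathrm{sgn}(w)(\la w)_x=(\la\abs{w})_x$ (and the analogues for $z^\#$), together with the kernel identity $(\mu+\mu^\#)K''(x-y)+(\la-\la^\#)K'(x-y)\leq -\de_{\{x=y\}}$ that was the core of Lemma \ref{lemma:transversal-1}, one arrives at
\begin{equation*}
\tfrac{d}{dt}Q(t)\leq -\int_{\R}\abs{w(t,x)}\abs{z^\#(t,x)}\,dx+S_1(t)+S_2(t),
\end{equation*}
where $S_2(t)\leq\tfrac{1}{c}\norm{w(t)}_{L^1}\norm{\varphi^\#(t)}_{L^1}$ exactly as in the original lemma. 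The divergence-form contribution $S_1(t)=\iint K(x-y)\mathrm{sgn}(w)\Phi_x(x)\abs{z^\#(y)}\,dxdy$ is handled by smoothing $\mathrm{sgn}$ through a non-decreasing $\eta_\ve$ and integrating by parts in $x$: the residual measure $\eta_\ve'(w)w_x$ concentrates on $\{w=0\}$ and absorbs into the dissipation as $\ve\to 0$, while the principal term obeys $S_1(t)\leq\tfrac{1}{2c_1}\norm{\Phi(t)}_{L^1}\norm{z^\#(t)}_{L^1}$ since $\abs{K'}\leq 1/(2c_1)$.

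Integrating in time and discarding the non-negative $Q(T)$, one gets $\iint\abs{z_x}\abs{z^\#}\,dxdt\leq Q(0)+\int_0^T(S_1+S_2)\,dt$. The initial piece is bounded by $\tfrac{1}{c}\norm{z_x(0)}_{L^1}\norm{z^\#(0)}_{L^1}=\mathcal{O}(\de_0^3)$ and $\int_0^T S_2\,dt\leq\tfrac{1}{c}(\sup_t\norm{w}_{L^1})\iint\abs{\varphi^\#}=\mathcal{O}(\de_0^3)$. For $\int_0^T S_1\,dt$, split $\Phi=\varphi+(\mu_{xx}-\la_x)z+\mu_x w$: the $\varphi$-piece contributes $\tfrac{1}{2c_1}(\iint\abs{\varphi})\sup_t\norm{z^\#}_{L^1}=\mathcal{O}(\de_0^2)$, and the three quadratic pieces are controlled via $\norm{\la_x}_{L^\infty}\leq C_*\de_0$ and the analogous pointwise bounds on $\mu_x,\mu_{xx}$ (inherited from $\mu\in C^2$ applied to a small-BV solution), combined with $\norm{z}_{L^1},\norm{z^\#}_{L^1}\leq\de_0$ and $\norm{w}_{L^1}\leq C_*\de_0^2$.

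The main technical obstacle is the rigorous passage to the limit in the sign regularization in $S_1$: controlling the singular part $\eta_\ve'(w)w_x$ on $\{w=0\}$ and showing that it enters with the right sign requires parabolic regularity to ensure generic transversality of the zero set of $w$, plus a careful bookkeeping of the resulting boundary-like terms. A secondary subtlety is that the naive bound of the quadratic pieces of $\Phi$ gives a contribution of order $\de_0^3\cdot T$; this is harmless in the regime $T\cdot\de_0\lesssim 1$ built into the paper's scheme, or can be removed in full generality by a bootstrap that feeds Lemma \ref{lemma:transversal-1} applied to $(z,z^\#)$ itself back into the right-hand side, replacing the factor $\int_0^T\norm{z}_{L^1}\norm{z^\#}_{L^1}dt$ by $\iint\abs{z}\abs{z^\#}\,dxdt=\mathcal{O}(\de_0^2)$.
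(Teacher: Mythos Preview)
Your approach diverges from the paper's, and the step you flag as the ``main technical obstacle'' is a genuine gap rather than a technicality. The paper does not run the Glimm-type functional on the pair $(z_x,z^\#)$. Instead it sets
\[
\mathcal{I}_1(T):=\sup_{(\tau,\xi)}\int_0^{T-\tau}\!\!\int_{\R}\abs{z_x(t,x)}\,\abs{z^\#(t+\tau,x+\xi)}\,dxdt,
\]
represents $z_x(t,\cdot)$ by a unit-window Duhamel formula based at time $t-1$ against the parametrix $G^\mu$ of $z_t=(\mu z_x)_x$, and arrives at an inequality of the shape $\mathcal{I}_1(T)\leq C\de_0^2+C_1^*\norm{\la}_{L^\infty}\,\mathcal{I}_1(T)$. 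The smallness $\norm{\la}_{L^\infty}\leq\norm{\la_x}_{L^1}\leq C_*\de_0$ then absorbs the self-referential term for every $T$. The appeal to Lemma~\ref{lemma:transversal-1} for $(z,z^\#)$ enters through the convolution with $G^\mu_x$, not through a second functional argument.

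Your integration by parts does not close. After regularization the residual is $\iint K(x-y)\,\eta_\ve'(w)w_x\,\Phi(x)\,\abs{z^\#(y)}\,dxdy$, while the parabolic dissipation available from the Kato computation is $-\iint K(x-y)\,\mu\,\eta_\ve'(w)w_x^2\,\abs{z^\#(y)}\,dxdy$. Any attempt to absorb the first into the second (say via Young's inequality) leaves behind $\iint K(x-y)\,\mu^{-1}\eta_\ve'(w)\Phi^2\,\abs{z^\#(y)}\,dxdy$; near a simple zero $x_j$ of $w$ one has $\int\eta_\ve'(w)\,dx\sim 2/\abs{w_x(x_j)}$, while $\Phi(x_j)=\varphi(x_j)+(\mu_{xx}-\la_x)z(x_j)$ is independent of $w_x(x_j)$. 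Nothing in the hypotheses prevents $w_x(x_j)$ from being small, so this term is not controlled, and generic transversality does not help.

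The secondary fix also fails. Your bound on $S_1$ carries the non-local weight $K'(x-y)$, so time-integration produces $\norm{K'}_{L^\infty}\int_0^T\norm{\Phi}_{L^1}\norm{z^\#}_{L^1}\,dt$ and not the diagonal quantity $\iint\abs{z}\abs{z^\#}\,dxdt$; there is no mechanism at this stage to trade one for the other. Moreover the paper needs the lemma for $T$ arbitrarily large (the argument in Section~\ref{sec:BV} drives $T\to\infty$), so the regime $T\de_0\lesssim 1$ is not available. The Duhamel route sidesteps both issues at once: the $L^1$-bound on $G^\mu_x$ over the fixed window $[t-1,t]$ is $\mathcal{O}(1)$ independent of $T$, and the self-interaction term inherits the small prefactor $\norm{\la}_{L^\infty}$ rather than a factor of $T$.
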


\begin{proof}[Proof of Lemma \ref{lemma:transversal-2}:]
	We define
	\begin{equation}
		\mathcal{I}_1(T):=\sup\limits_{(\tau,\xi)\in[0,T]\times R}\int\limits_{0}^{T-\tau}\int\limits_{\R}\abs{z_x(t,x)z^\#(t+\tau,x+\xi)}\,dxdt\leq (C^*\de_0^2)^2T.
	\end{equation}
	Let us denote the solution of $z_t=(\mu z_x)_x$ as $G^\mu$. Then $G^\mu$ satisfies the following estimate
	\begin{equation}
		\abs{G^\mu_x(t,x;s,y)}\leq C_1e^{-q_*\frac{(x-y)^2}{t-s}},\mbox{ for some }C_1,q_*>0.
	\end{equation}
	Such $G^\mu$ can be constructed by method of parametrix (see \cite[Theorem 11, Chapter 1]{Friedman}). Therefore, we have 
	\begin{equation}\label{estimate-G-mu}
		\norm{G^\mu_x(1,x;0,x-\cdot)}_{L^1(\R)},\, \norm{G^\mu_x(t,x;t-\cdot,x-\cdot)}_{L^1(0,1;L^1(\R))}\leq C_1^*
	\end{equation} 
	for some $C_1^*>0$. We can write
	\begin{align}
		z_x(t,x)&=\int G^\mu_x(1,x;0,y)z(t-1,y)\,dy\\
		&+\int\limits_{0}^1\int G^\mu_x(1,x;s,y)[\varphi-(\la z)_x](t-1+s,y)\,dyds.
	\end{align}
	By using \eqref{estimate-G-mu} we get
	\begin{align*}
		&\int\limits_{1}^{T-\tau}\int \abs{z_x(t,x)z^\#_x(t+\tau,x+\xi)}\,dxdt\\
		&\leq \int\limits_{1}^{T-\tau}\int\int \abs{G^\mu_x(1,x;0,x-y) z(t-1,x-y)z^\#(t+\ta,x+\xi)}\,dydxdt\\
		&+\int\limits_{1}^{T-\tau}\int\int\limits_{0}^{1}\int \norm{\la_x}_{L^\f}\abs{G^\mu_x(1,x;s,x-y) z(t-1+s,x-y)z^\#(t+\ta,x+\xi)}\,dydsdxdt\\
		&+\int\limits_{1}^{T-\tau}\int\int\limits_{0}^{1}\int \norm{\la}_{L^\f}\abs{G^\mu_x(1,x;s,x-y) z_x(t-1+s,x-y)z^\#(t+\ta,x+\xi)}\,dydsdxdt\\
		&+\int\limits_{1}^{T-\tau}\int\int\limits_{0}^{1}\int \abs{G^\mu_x(1,x;s,x-y) \varphi(t-1+s,x-y)z^\#(t+\ta,x+\xi)}\,dydsdxdt\\
		&\leq \left(\int\abs{G_x^\mu(1,x)}\,dx+\sup\limits_{x}\int\limits_{0}^{1}\int \norm{\la_x}_{L^\f}\abs{G^\mu_x(1,x;s,x-y)}\,dyds\right)\cdot\\
		&\quad\quad\quad\quad\quad\quad\quad\quad\quad\quad\quad\quad\sup\limits_{s,y,\tau,\xi}\Big(\int\limits_{1}^{T-\tau}\abs{ z(t-1+s,x-y)z^\#(t+\ta,x+\xi)}\,dxdt\Big)\\
		&+\left(\sup\limits_{x}\int\limits_{0}^{1}\int \norm{\la}_{L^\f}\abs{G^\mu_x(1,x;s,x-y)}\,dyds\right)\cdot\\
		&\quad\quad\quad\quad\quad\quad\quad\quad\quad\quad\quad\quad\sup\limits_{s,y,\tau,\xi}\Big(\int\limits_{1}^{T-\tau}\abs{ z_x(t-1+s,x-y)z^\#(t+\ta,x+\xi)}\,dxdt\Big)\\
		&+\norm{z^\#}_{L^\f([0,T]\times\R)}\left[\sup\limits_{x}\int\limits_{0}^{1}\int \abs{G^\mu_x(1,x;s,x-y)}\,dyds\right]\cdot \int\limits_{0}^{T}\int\abs{\varphi(t,x)}\,dxdt\\
		&\leq C^*_1(1+\norm{\la_x}_{L^\f})\frac{4\de_0^2}{c}+C_1^*\norm{\la}_{L^\f}\mathcal{I}_1(T)+C_1\de_0^3.
	\end{align*}
	We note that $\mathcal{I}_1(1)\leq C^2_1(\de_0)^2$. Since $C_1^*\norm{\la}_{L^\f}\leq C_1^*\norm{\la_x}_{L^\f}\leq C_1^*\de_0<1/2$, we obtain
	\begin{equation}
		\mathcal{I}_1(T)\leq C^*_1(1+\norm{\la_x}_{L^\f})\frac{4\de_0^2}{c}+C^2_1(\de_0)^2+C_1\de_0^3+\frac{1}{2}\mathcal{I}_1(T).
	\end{equation}
	Hence, we get \eqref{est:transversal-2}.
\end{proof}
\section{BV bounds}\label{sec:BV}
Let us consider an initial data satisfying $TV(u_0)\leq \frac{\de_0}{8\sqrt{n}}$ and $\lim\limits_{x\rr-\f}u(x)=u^*\in K$. Then by applying Proposition \ref{prop:local-existence}, we obtain
\begin{equation}
	\norm{u_x(\hat{t})}_{L^1(\R)}\leq\frac{\de_0}{4\sqrt{n}},
\end{equation}
where $\hat{t}$ is defined as in \eqref{assumption:u-L1}. To get the total variation bound in $(\hat{t},\f)$ we argue by contradiction as in \cite{BiB-vv-lim}. Let $T$ be defined as follows
\begin{equation}\label{def:T-max-time}
	T:=\sup\left\{\tau;\sum\limits_{i}\int\limits_{\hat{t}}^{\tau}\int\limits_{\R}\abs{\phi_i(t,x)}\,dxdt\leq\frac{\de_0}{2}\right\}.
\end{equation}
It $T<\f$, we get a contradiction as follows. From \eqref{def:T-max-time}, we have
\begin{equation}
	\norm{u_x(t)}_{L^1}\leq 2\sqrt{n}\norm{u_x(\hat{t})}_{L^1}+\frac{\de_0}{2}\leq \de_0\mbox{ for all }t\in[\hat{t},T].
\end{equation} 
 By applying Lemma \ref{lemma:transversal-1} and \ref{lemma:transversal-2}, we get 
 \begin{equation}
 	\int\limits_{\hat{t}}^{\tau}\int\limits_{\R}\abs{\phi_i(t,x)}\,dxdt=\mathcal{O}(1)\de_0^2<\frac{\de_0}{2}
 \end{equation}
for sufficiently small $\de_0>0$. Hence, $T$ is not the supremum defined as in \eqref{def:T-max-time}. Hence, $\int\limits_{\hat{t}}^{\tau}\int\limits_{\R}\abs{\phi_i(t,x)}\,dxdt\leq\frac{\de_0}{2}$ for all $t>\hat{t}$. Subsequently, we obtain $\norm{u_x(t)}_{L^1}\leq \de_0$.

\section{Stability estimates}\label{sec:stability}
To prove the stability estimate for viscosity solutions, let us consider the solution of the following linear equation
\begin{align}
	 h_t+h\bullet A(u)u_x+ A(u)h_x
	=&B(u)h_{xx}+h\bullet B(u)u_{xx}+(u_x\otimes h):D^2B(u)u_x\nonumber\\
	&+u_x\bullet B(u)h_x+h_x\bullet B(u)u_x.\label{eqn:h-stabilty}
\end{align}	
By using Proposition \ref{prop:local-existence}, we can have that
\begin{equation}
	\norm{h(\hat{t})}_{L^1}\leq \frac{\norm{h_0}_{L^1}}{2},
\end{equation}
where $\hat{t}$ is defined as in \eqref{assumption:u-L1}. For $t>\hat{t}$ we apply similar technique as in section \ref{sec:BV}. By using interaction estimates (Lemma \ref{lemma:transversal-1} and \ref{lemma:transversal-2}) we can obtain 
\begin{equation}
	\norm{h(t)}_{L^1}\leq \frac{\norm{h(\hat{t})}_{L^1}}{2}\mbox{ for all }t>\hat{t}.
\end{equation}
Combining the above inequality with Proposition \ref{prop:local-existence}, we have 
\begin{equation}\label{stability-estimate-1}
	\norm{h(t)}_{L^1}\leq L_3\norm{h_0}_{L^1}\mbox{ for all }t>0.
\end{equation}
To translate this result to the $L^1$ stability estimate of two viscosity solutions $u,v$ we use the homotopy method as in \cite{BiB-vv-lim,BiB-temple-class}. Let $u^\theta$ be the solution to \eqref{eqn-main} corresponding to the initial data $\theta\bar{u}+(1-\theta)\bar{v}$. Then let $h$ be defined as $h^\theta:=\frac{d u^\theta}{d\theta}$. Then $h^\theta$ solves \eqref{eqn:h-stabilty}. By the estimate \eqref{stability-estimate-1},
\begin{equation*}
	\norm{h^\theta(t)}_{L^1}\leq L_3\norm{h^\theta_0}_{L^1}=L_3\norm{\bar{u}-\bar{v}}_{L^1}\mbox{ for all }t>0.
\end{equation*}
Hence, for all $t>0$ it follows,
\begin{align}
	\norm{u(t)-v(t)}_{L^1}\leq \int\limits_{0}^{1}\norm{\frac{du^\theta(t)}{d\theta}}_{L^1}\,d\theta\leq L_3\norm{\bar{u}-\bar{v}}_{L^1}.
\end{align}
\section{Propagation speed}\label{sec:propagation}

\begin{lemma}\label{lemma:finite-speed}
	There exists $\al_1>0, \B_1>0$ such that the following holds. Let $u,v$ be solutions of \eqref{eqn-main} satisfying \eqref{condition-data-thm-1} and initial data satisfy
	\begin{equation}\label{condition:finite-propagation-1}
		u(0,x)=v(0,x)\mbox{ for }x\notin [a,b].
	\end{equation}
Then for all $x\in\R,\, t>0$
\begin{equation}
			\abs{u(t,x)-v(t,x)}\leq \al_1\norm{u(0)-v(0)}_{L^\f}\cdot\min \left\{e^{c_1(\B_1t-(x-b))},e^{c_1(\B_1t+(x-a))}\right\}.
\end{equation}
Let $\hat{u},\hat{v}$ be solutions of \eqref{eqn-main} satisfying \eqref{condition-data-thm-1} and initial data satisfy
\begin{equation}\label{condition:finite-propagation-2}
	\hat{u}(0,x)=\hat{v}(0,x)\mbox{ for }x\in [a,b].
\end{equation}
Then for all $x\in\R,\, t>0$
\begin{equation}\label{finite-propagation-2}
	\abs{\hat{u}(t,x)-\hat{v}(t,x)}\leq \al_1\norm{\hat{u}(0)-\hat{v}(0)}_{L^\f}\cdot \left(e^{c_1(\B_1t-(x-a))}+e^{c_1(\B_1t+(x-b))}\right).
\end{equation}
\end{lemma}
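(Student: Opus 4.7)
The plan is to reduce both parts to a weighted $L^\infty$ bound on solutions of the linearized equation \eqref{eqn:h-stabilty}, following the homotopy technique of Section \ref{sec:stability}. Let $u^\theta$ be the solution of \eqref{eqn-main} with datum $\theta u(0,\cdot)+(1-\theta)v(0,\cdot)$ and set $h^\theta:=\partial_\theta u^\theta$, so that $h^\theta$ solves \eqref{eqn:h-stabilty} with initial datum $u(0)-v(0)$ (independent of $\theta$). Integrating in $\theta$, the first claim of the lemma follows at once from the pointwise bound
\begin{equation*}
\abs{h(t,x)} \leq \al_1 \norm{h_0}_{L^\f}\, e^{c_1(\B_1 t - (x-b))}
\end{equation*}
(together with its symmetric counterpart $\al_1\norm{h_0}_{L^\f}e^{c_1(\B_1 t+(x-a))}$), valid for any solution of \eqref{eqn:h-stabilty} whose initial datum is supported in $[a,b]$.

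To establish this exponential bound, I introduce the weight $\phi(t,x):=e^{c_1(x-b-\B_1 t)}$ and set $\tilde h:=\phi h$. A direct computation shows that $\tilde h$ satisfies a parabolic system of the form
\begin{equation*}
\tilde h_t + (A(u) - 2c_1 B(u))\tilde h_x - B(u)\tilde h_{xx} + M\tilde h = \phi\, \mathcal F(u, u_x, u_{xx}, h),
\end{equation*}
where $M = c_1 \B_1 \mathbb I - c_1 A(u) + c_1^2 B(u)$ and $\mathcal F$ collects the lower-order couplings already present in \eqref{eqn:h-stabilty}. Choosing $\B_1 > \sup_{u\in K}\abs{A(u)} + c_1\sup_{u\in K}\abs{B(u)}$ makes $M$ positive semi-definite, so the principal part is uniformly parabolic (thanks to $B\geq c_0\mathbb I$) with a non-negative zeroth-order damping. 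Representing $\tilde h$ by the Duhamel formula against the fundamental solution of the frozen-coefficient operator, whose Gaussian kernel estimates are the ones underlying \eqref{estimate-G-mu} and Lemma \ref{lemma:transversal-2}, and controlling the source $\phi\,\mathcal F$ via the bounds $\norm{u_x(t)}_{L^1}\leq\de_0$, $\norm{u_{xx}(t)}_{L^1}\leq 2\kappa\kappa_1^2\kappa_P^2\de_0/\sqrt t$ from Proposition \ref{prop:parabolic} together with the $L^1$-stability on $h$ proven in Section \ref{sec:stability}, a Gronwall loop yields $\norm{\tilde h(t)}_{L^\f}\leq \al_1 \norm{\tilde h(0)}_{L^\f}$. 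Since $\phi(0,x)=e^{c_1(x-b)}\leq 1$ on the support $[a,b]$ of $h_0$, one has $\norm{\tilde h(0)}_{L^\f}\leq\norm{h_0}_{L^\f}$; dividing by $\phi$ gives the desired right-tail bound, and the symmetric weight $e^{c_1(a-x-\B_1 t)}$ produces the left-tail bound.

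The second claim \eqref{finite-propagation-2} reduces to the first by linearity. Under \eqref{condition:finite-propagation-2}, the datum $\hat h_0:=\hat u(0)-\hat v(0)$ vanishes on $[a,b]$, so I split $\hat h_0 = \hat h_0\mathbf 1_{\{x\leq a\}}+\hat h_0\mathbf 1_{\{x\geq b\}}$, solve \eqref{eqn:h-stabilty} separately for each piece along the homotopy connecting $\hat u$ to $\hat v$ (possible because \eqref{eqn:h-stabilty} is linear in $h$ for fixed coefficients in $u$), and apply to each summand the preceding estimate with $[a,b]$ replaced respectively by $(-\infty,a]$ and $[b,\infty)$ (the weighted argument of the previous paragraph only used that $\phi(0,\cdot)\leq 1$ on the support of the datum, which is still true on these half-lines); summing the two bounds and using $\norm{\hat h_0\mathbf 1_{\{x\leq a\}}}_{L^\f},\,\norm{\hat h_0\mathbf 1_{\{x\geq b\}}}_{L^\f}\leq \norm{\hat u(0)-\hat v(0)}_{L^\f}$ produces the right-hand side of \eqref{finite-propagation-2}. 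The main obstacle I anticipate is the weighted $L^\infty$ step of the second paragraph: unlike the $L^1$-stability carried out in Section \ref{sec:stability}, here one needs \emph{pointwise} Gaussian bounds on the fundamental solution of the frozen operator in order to absorb the weight $e^{c_1(x-y)}$ at cost only $e^{c_1\B_1 t}$, which is what forces the quantitative condition $\B_1\gtrsim c_1\norm{B}_{L^\f}+\norm{A}_{L^\f}$ and constrains $c_1$ to be small; handling the commutator terms generated by conjugating the non-constant matrix $B(u)$ with $\phi$ is essentially the same trade-off that appears in Lemma \ref{lemma:transversal-2}, but transferred from the $L^1$ to the $L^\infty$ setting.
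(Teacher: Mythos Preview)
Your strategy coincides with the paper's: reduce via the homotopy $u^\theta$ to the linearized equation, then prove a weighted $L^\infty$ bound on $h$ using pointwise Gaussian estimates for a parabolic fundamental solution. The packaging is slightly different. You conjugate by $\phi=e^{c_1(x-b-\beta_1 t)}$ and aim for a uniform $L^\infty$ bound on $\tilde h=\phi h$ via Duhamel and Gronwall; the paper instead keeps $h$, uses the variable--coefficient kernel $G^B$ of $\partial_t-(B(u)\partial_x\cdot)_x$ with the Friedman bounds \eqref{estimate:G-B}, and builds an explicit barrier
\[
e(t,x)=\zeta(t)\exp\Bigl\{4\hat C_B\|DA\|_{L^\infty}\!\int_0^t\|u_x(s)\|_{L^\infty}\,ds+c_1(t-x)\Bigr\},
\]
checking by a bootstrap that $|h|\le e$ (the Gaussian--times--exponential convolutions are computed by completing the square). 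Your positivity of $M$ plays the same role as the paper's absorption of all linear--in--$t$ growth into $\zeta(t)\lesssim e^{Ct}$ and then into $\beta_1$. Your treatment of the second assertion by splitting $\hat h_0=\hat h_0\mathbf 1_{\{x\le a\}}+\hat h_0\mathbf 1_{\{x\ge b\}}$ and using linearity of \eqref{eqn:h-stabilty} is exactly what the paper invokes (deferring details to \cite{BiB-vv-lim}).

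One point you should tighten: the couplings $u_x\bullet B(u)h_x$ and $h_x\bullet B(u)u_x$ in \eqref{eqn:h-stabilty} are first order in $h$; after conjugation they produce $\tilde h_x$--terms, and an $L^\infty$ Gronwall on $\tilde h$ alone will not close if they sit in $\mathcal F$. The paper handles this by rewriting the equation as
\[
h_t-(B(u)h_x)_x=(h\bullet B(u)u_x)_x-(A(u)h)_x+u_x\bullet A(u)h-h\bullet A(u)u_x,
\]
so the right--hand side is either in divergence form (paired with $G^B_x$ in Duhamel, costing only $(t-s)^{-1/2}$) or genuinely zeroth order in $h$. In your formulation the equivalent move is to absorb those $h_x$--pieces into the drift of the conjugated operator (an $O(\|u_x\|_{L^\infty})$ perturbation) or to recast them in divergence form before applying Duhamel, so that $\mathcal F$ truly contains no derivative of $h$. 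Apart from this and two harmless sign slips in your computed drift and $M$, the argument is correct and essentially the paper's.
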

Before we proceed for the proof of Lemma \ref{lemma:finite-speed}, we recall an estimate from \cite{Friedman}. Let $G^B$ be the fundamental solution to the nonlinear parabolic equation $h_t-(B(u(t,x))h_x)_x=0$. Then we have 
\begin{equation}\label{estimate:G-B}
	\abs{G^B(s,y;t,x)}\leq C_B\frac{e^{-\frac{c_1(x-y)^2}{4(t-s)}}}{\sqrt{t-s}}\mbox{ and }\abs{G_x^B(s,y;t,x)}\leq C_B\frac{e^{-\frac{c_1(x-y)^2}{4(t-s)}}}{t-s},
\end{equation}
for some constant $C_B$.
\begin{proof}[Proof of Lemma \ref{lemma:finite-speed}:]
	Recall the linearized system \eqref{eqn:h},
	\begin{equation}
		h_t-(B(u)h_x)_x=(h\bullet B(u)u_x)_x-(A(u)h)_x+u_x\bullet A(u)h-h\bullet A(u)u_x.
	\end{equation}	
 We consider the integral representation of $h(t,x)$ as follows,
\begin{align}
	h(t,x)=&G^B(t)\star h_0-\int\limits_{0}^{t}G^{B}_x(t-s)\star (A(u(s))h(s))\,ds\nonumber\\
	&+\int\limits_{0}^{t}G^{B}(t-s)\star(u_x\bullet A(u)h-h\bullet A(u)u_x)(s)\,ds.\label{eq:int-representation:h}
\end{align}
We consider initial data $h_0$ as follows
\begin{equation}
	\left\{\begin{array}{rl}
		\abs{h(0,x)}\leq 1 &\mbox{ if }x\leq 0,\\
		h(0,x)=0 &\mbox{ if }x> 0.
	\end{array}\right.
\end{equation}
We would like to estimate $h(t,x)$ on the domain $\{x>bt\}$. Let us consider $\zeta(t)$ satisfying the following estimate
\begin{equation}
	\zeta(t)\geq \hat{C}_B+2\norm{A}_{L^\f}\int\limits_{0}^{t}\left(\frac{1}{\sqrt{t-s}}+\sqrt{\pi}\right) \zeta(s)\,ds\mbox{ and }\zeta(0)=1,
\end{equation}
where $\hat{C}_B=1+\frac{2C_B\sqrt{\pi}}{\sqrt{c_1}}$. Furthermore, let $e(t,x)$ be defined as follows,
\begin{equation}
	e(t,x):=\zeta(t)\,\mbox{exp}\left\{4\hat{C}_B\norm{DA}_{L^\f}\int\limits_{0}^{t}\norm{u_x(s)}_{L^\f}\,ds+t-x\right\}.
\end{equation}
From \eqref{eq:int-representation:h} we can obtain
\begin{align*}
	\abs{h(t,x)}&\leq \int\limits_{\R}\abs{G^B(t,x-y)}\abs{h(0,y)}\,dy\\
	&+\norm{A}_{L^\f}\int\limits_{0}^{t}\int\limits_{\R}\abs{G^B_x(t-s,x-y)}\abs{h(s,y)}\,dyds\\
	&+\norm{DA}_{L^\f}\int\limits_{0}^{t}\int\limits_{\R}\abs{G^B(t-s,x-y)}\abs{u_x(s,y)}\abs{h(s,y)}\,dyds.
\end{align*}
Now, we observe that
\begin{equation}
	\int\limits_{\R}\abs{G^B(t,x-y)}\abs{h(0,y)}\,dy< C_B\int\limits_{\R}\frac{e^{-\frac{c_1(x-y)^2}{4t}}}{\sqrt{ t}}e^{-c_1y}\,dy=\frac{2C_B\sqrt{\pi}}{\sqrt{c_1}}e^{c_1(t-x)},
\end{equation}
and applying \eqref{estimate:G-B} we get
\begin{align*}
		&\norm{A}_{L^\f}\int\limits_{0}^{t}\int\limits_{\R}\abs{G^B_x(t-s,x-y)}\abs{h(s,y)}\,dyds\\
	&\leq C_B\norm{A}_{L^\f}\int\limits_{0}^{t}\int\limits_{\R}\frac{e^{-\frac{c_1(x-y)^2}{4(t-s)}}}{t-s}e(s,y)\,dyds,\\	
	&\norm{DA}_{L^\f}\int\limits_{0}^{t}\int\limits_{\R}\abs{G^B(t-s,x-y)}\abs{u_x(s,y)}\abs{h(s,y)}\,dyds\\
	&\leq C_B\norm{DA}_{L^\f}\int\limits_{0}^{t}\int\limits_{\R}\norm{u_x(s,\cdot)}_{L^\f}\frac{e^{-\frac{c_1(x-y)^2}{4(t-s)}}}{\sqrt{t-s}}e(s,y)\,dyds.
\end{align*}
We calculate
\begin{align*}
	&C_B\norm{A}_{L^\f}\int\limits_{0}^{t}\int\limits_{\R}\frac{e^{-\frac{c_1(x-y)^2}{4(t-s)}}}{t-s}e(s,y)\,dyds\\	
	&=C_B\norm{A}_{L^\f}\int\limits_{0}^{t}\frac{\zeta(s)}{t-s}e^{4\hat{C}_B\norm{DA}_{L^\f}\int\limits_{0}^{s}\norm{u_x(\si)}_{L^\f}\,d\si}\int\limits_{\R}e^{-\frac{c_1(x-y)^2}{4(t-s)}+c_1(s-y)}\,dyds.
\end{align*}
Note that
\begin{align*}
	-\frac{c_1(x-y)^2}{4(t-s)}+c_1(s-y)&=-\frac{c_1(x-y)^2}{4(t-s)}-c_1(t-s)+c_1(x-y)+c_1(t-x)\\
	&=c_1(t-x)-\frac{c_1}{4(t-s)}(x-y-(t-s))^2.
\end{align*}
Therefore, we get
\begin{align*}
		&C_B\norm{A}_{L^\f}\int\limits_{0}^{t}\int\limits_{\R}\frac{e^{-\frac{c_1(x-y)^2}{4(t-s)}}}{t-s}e(s,y)\,dyds\\	
		&=C_B\norm{A}_{L^\f}e^{c_1(t-x)}\int\limits_{0}^{t}\frac{\zeta(s)}{t-s}e^{4\hat{C}_B\norm{DA}_{L^\f}\int\limits_{0}^{s}\norm{u_x(\si)}_{L^\f}\,d\si}\int\limits_{\R}e^{-\frac{c_1}{4(t-s)}(x-y-(t-s))^2}\,dyds\\
		&=C_B\norm{A}_{L^\f}\frac{2\sqrt{\pi}}{\sqrt{c_1}}e^{c_1(t-x)}\int\limits_{0}^{t}\frac{\zeta(s)}{\sqrt{t-s}}e^{4\hat{C}_B\norm{DA}_{L^\f}\int\limits_{0}^{s}\norm{u_x(\si)}_{L^\f}\,d\si}\,ds\\
		&\leq e^{4\hat{C}_B\norm{DA}_{L^\f}\int\limits_{0}^{t}\norm{u_x(\si)}_{L^\f}\,d\si}C_B\norm{A}_{L^\f}\frac{2\sqrt{\pi}}{\sqrt{c_1}}e^{c_1(t-x)}\int\limits_{0}^{t}\frac{\zeta(s)}{\sqrt{t-s}}\\
		&\leq  e^{4\hat{C}_B\norm{DA}_{L^\f}\int\limits_{0}^{t}\norm{u_x(\si)}_{L^\f}\,d\si+c_1(t-x)}\left(\frac{\zeta(t)}{2}-\frac{C_B\sqrt{\pi}}{\sqrt{c_1}}\right)\\
		&\leq \frac{e(t,x)}{2}-\frac{C_B\sqrt{\pi}}{\sqrt{c_1}}e^{c_1(t-x)}.
\end{align*}
Similarly, we calculate
\begin{align*}
		& 2C_B\norm{DA}_{L^\f}\int\limits_{0}^{t}\int\limits_{\R}\norm{u_x(s,\cdot)}_{L^\f}\frac{e^{-\frac{c_1(x-y)^2}{4(t-s)}}}{\sqrt{t-s}}e(s,y)\,dyds\\
		&=2C_B\norm{DA}_{L^\f}\int\limits_{0}^{t}\int\limits_{\R}\norm{u_x(s,\cdot)}_{L^\f}\zeta(s)e^{4\hat{C}_B\norm{DA}_{L^\f}\int\limits_{0}^{s}\norm{u_x(\si)}_{L^\f}\,d\si}\frac{e^{-\frac{c_1(x-y)^2}{4(t-s)}+c_1(s-y)}}{\sqrt{t-s}}\,dyds\\
		&=\frac{4C_B\sqrt{\pi}}{\sqrt{c_1}}e^{c_1(t-x)}\norm{DA}_{L^\f}\int\limits_{0}^{t}\norm{u_x(s,\cdot)}_{L^\f}\zeta(s)e^{4\hat{C}_B\norm{DA}_{L^\f}\int\limits_{0}^{s}\norm{u_x(\si)}_{L^\f}\,d\si}\,dyds\\
		&\leq \frac{C_B\sqrt{\pi}}{\sqrt{c_1}}e^{c_1(t-x)}\zeta(t)\int\limits_{0}^{t}4\norm{DA}_{L^\f}\norm{u_x(s,\cdot)}_{L^\f}e^{4\hat{C}_B\norm{DA}_{L^\f}\int\limits_{0}^{s}\norm{u_x(\si)}_{L^\f}\,d\si}\,dyds\\
		&\leq \frac{C_B\sqrt{\pi}}{\sqrt{c_1}}e^{c_1(t-x)}\frac{\zeta(t)}{\hat{C}_B}\left(e^{4\norm{DA}_{L^\f}\int\limits_{0}^{t}\norm{u_x(\si)}_{L^\f}\,d\si}-1\right)\\
		&=\frac{C_B\sqrt{\pi}}{\hat{C}_B\sqrt{c_1}}\zeta(t)e^{4\hat{C}_B\norm{DA}_{L^\f}\int\limits_{0}^{t}\norm{u_x(\si)}_{L^\f}\,d\si+c_1(t-x)}-\frac{C_B\sqrt{\pi}}{\sqrt{c_1}}e^{c_1(t-x)}\frac{\zeta(t)}{\hat{C}_B}\\
		&\leq \frac{e(t,x)}{2}-\frac{C_B\sqrt{\pi}}{\sqrt{c_1}}e^{c_1(t-x)}.
\end{align*}
In the last line, we have used $\frac{C_B\sqrt{\pi}}{\hat{C}_B\sqrt{c_1}}\leq \frac{1}{2}$ and $\zeta(t)\geq\hat{C}_B$. Hence, if $\abs{h(t,x)}\leq e(t,x)$ for all $t\in [0,\tau)$, then we have $\abs{h(\tau,x)}<e(\tau,x)$. Therefore, $\abs{h(t,x)}\leq e(t,x)$ holds for all $t>0$. From \eqref{estimate:parabolic-1} we have
\begin{equation*}
	\norm{u_x(s)}_{L^\f}\leq \max\left\{\frac{2\kappa\kappa_1^2\kappa_P^2\de_0}{\sqrt{s}},\frac{2\kappa\kappa_1^2\kappa_P^2\de_0}{\sqrt{\hat{t}}}\right\}.
\end{equation*}
Hence, we get
\begin{align}
	\abs{h(t,x)}&\leq \abs{e(t,x)}\leq 2\hat{C}_Be^{C_3t}e^{4\hat{C}_B\norm{DA}_{L^\f}\de_0(2\sqrt{t}+t/\hat{t})}e^{t-x}\leq \al_1e^{c_1(\B_1t-x)}.
\end{align}
Let $h^\#$ be a solution to	\eqref{eqn-h-1} with the following initial data
\begin{equation}
	\left\{\begin{array}{rl}
		\abs{h^\#(0,x)}\leq \vartheta &\mbox{ if }x\leq b,\\
		h^\#(0,x)=0 &\mbox{ if }x> b.
	\end{array}\right.
\end{equation}
for some $\vartheta>0$ and $b\in\R$ then it follows from previous observations that
\begin{equation}
	\abs{h^\#(t,x)}\leq \vartheta \al_1e^{c_1(\B_1t-(x-b))}.
\end{equation}
Similarly, let $h^\ddagger$ be a solution to	\eqref{eqn-h-1} with the following initial data
\begin{equation}
	\left\{\begin{array}{rl}
		\abs{h^\ddagger(0,x)}\leq \vartheta &\mbox{ if }x\geq a,\\
		h^\ddagger(0,x)=0 &\mbox{ if }x< a.
	\end{array}\right.
\end{equation}
for some $\vartheta>0$ and $a\in\R$ then it follows from previous observations that
\begin{equation}
	\abs{h^\ddagger(t,x)}\leq \vartheta \al_1e^{c_1(\B_1t+(x-a))}.
\end{equation}
Rest of the proof follows in a similar way as in \cite[Lemma 12.1]{BiB-vv-lim}. For sake of completeness, we briefly discuss here. Let $u^\theta$ be the solution of \eqref{eqn-main} with initial data $u^\theta(0)=\theta u(0)+(1-\theta)v(0)$. Let $h^\theta$ be the solution of the following Cauchy problem,
\begin{align}
	h^\theta_t+(A(u)h^\theta)_x-(B(u)h^\theta_x)_x&=(h^\theta\bullet B(u)u_x)_x+u_x\bullet A(u)h^\theta-h^\theta\bullet A(u)u_x,\\
	h^\theta(0,x)&=u(0,x)-v(0,x).
\end{align}
If we assume \eqref{condition:finite-propagation-1} we have
\begin{align*}
	\abs{h^\theta(t,x)}&\leq \norm{u(0)-v(0)}_{L^\f}\al_1 e^{c_1(\B_1t-(x-b))},\\
	\abs{h^\theta(t,x)}&\leq \norm{u(0)-v(0)}_{L^\f}\al_1 e^{c_1(\B_1t+(x-a))}.
\end{align*}
Hence, we get
\begin{align*}
	\abs{u(t,x)-v(t,x)}&\leq \int\limits_{0}^{1}\abs{\frac{du^\theta(t,x)}{d\theta}}\,d\theta=\int\limits_{0}^{1}\abs{h^\theta(t,x)}\,d\theta\\
	&\leq \al_1\norm{u(0)-v(0)}_{L^\f}\min \left\{e^{c_1(\B_1t-(x-b))},e^{c_1(\B_1t+(x-a))}\right\}.
\end{align*}
The proof of \eqref{finite-propagation-2} follows in a similar manner as in the proof of \cite[Lemma 12.1]{BiB-vv-lim}. We omit here.
\end{proof}

\section{Vanishing viscosity limit}\label{sec:vv-limit}
As claimed in Theorem \ref{theorem-1} we want to prove vanishing viscosity limit as $\e\rr0$ for the following Cauchy problem
\begin{equation}
	u^\e_t+A(u^\e)u^\e_x=\e(B(u^\e)_x)_x\mbox{ and }u^\e(0,x)=\bar{u}(x).
\end{equation}
Note that $u^\e(t,x)=u(t/\e,x/\e)$ where $u$ solves the following problem with fix viscosity but scaled initial data, 
\begin{equation}
	u_t+A(u)u_x=(B(u)_x)_x\mbox{ and }u(0,x)=\bar{u}(\e x).
\end{equation}
Observe that
\begin{align*}
	TV(\bar{u}(\e \cdot ))&= TV(\bar{u}(\cdot)),\\
	\norm{\bar{u}(\e\cdot )}_{L^1}&=\frac{1}{\e}	\norm{\bar{u}(\cdot )}_{L^1}.
\end{align*}
From section \ref{sec:BV}, \ref{sec:stability} and \ref{sec:propagation} we get
\begin{align*}
	TV(u(t))&\leq L_1 TV(\bar{u}),\\
	\norm{u(t)-v(t)}_{L^1}&\leq \frac{L_2}{\e}\norm{\bar{u}-\bar{v}}_{L^1},\\
	\norm{u(t)-u(s)}_{L^1}&\leq L_3\left(\abs{\frac{t}{\e}-\frac{s}{\e}}+\abs{\frac{\sqrt{t}}{\sqrt{\e}}-\frac{\sqrt{s}}{\sqrt{\e}}}\right),
\end{align*}
if $\bar{u}(x)=\bar{v}(x)$ for $x\in[a,b]$ then we have
\begin{equation*}
	\abs{u(t,x)-v(t,x)}\leq \al_1\norm{\bar{u}-\bar{v}}_{L^\f}\left(e^{c_1(\B_1 t-(x-a/\e))}+e^{c_1(\B_1 t+(x-b/\e))}\right).
\end{equation*}
Therefore, we obtain
\begin{align}
	TV(u^\e(t))&\leq L_1 TV(\bar{u}),\label{est-1}\\
	\norm{u^\e(t)-v^\e(t)}_{L^1}&=\e\norm{u(t)-v(t)}_{L^1}\leq L_2\norm{\bar{u}-\bar{v}}_{L^1},\label{est-2}\\
	\norm{u^\e(t)-u^\e(s)}_{L^1}&\leq L_3\left(\abs{t-s}+\sqrt{\e}\abs{\sqrt{t}-\sqrt{s}}\right),\label{est-3}\\
	\abs{u^\e(t,x)-v^\e(t,x)}&\leq \al_1\norm{\bar{u}-\bar{v}}_{L^\f}\left(e^{\frac{c_1}{\e}(\B_1 t-(x-a))}+e^{\frac{c_1}{\e}(\B_1 t+(x-b))}\right).\label{est-4}
\end{align}
This completes the proof of Theorem \ref{theorem-1}.

The convergence of $u^\e$ as $\e\rr0$ follows from a standard argument with an application of Helly's theorem and the $L^1$ continuity \eqref{est-3}. Indeed, due to the uniform TV estimate \eqref{est-1} by using Helly's theorem we can pass to a the limit (up to a subsequence) for a countable dense set $\{t_n\}$ and then applying $L^1$ continuity we can define the limit function at all time $t>0$. We set
\begin{equation}
	L^1_{loc}-\lim\limits_{k\rr\f}u^{\e_k}(t,\cdot)=:S_t(\bar{u}).
\end{equation}
For $\de_0>0$ and compact set $K\subset\mathcal{U}$, we consider
\begin{equation}
	\mathcal{D}_0:=\left\{u:\R\rr\R^n;\,\,u(-\f)\in K\mbox{ and } TV(u)\leq \de_0\right\}
\end{equation}
By considering a  smaller domain $\mathcal{D}\subset \mathcal{D}_0$ which is positively invariant, we can set $S:\R\times\mathcal{D}\rr\mathcal{D}$. From \eqref{est-3} and \eqref{est-4} we conclude the time continuity and continuous dependence on initial data for $S_t$. To complete the proof of Theorem \ref{theorem-2} we need to show the characterization of $S_t$. To this end, next we show that the limit function of vanishing viscosity process satisfies the finite speed of propagation property.

\begin{lemma}[Finite speed of propagation]\label{lemma:fsp}
	Let $\bar{u},\bar{v}\in\mathcal{D}$. Then there exists $\B_1>0$ such that the following holds for $a,b\in\R$,
	\begin{equation}\label{L1-loc-stability-1}
		\int\limits_{a}^{b}\abs{S_t(\bar{u})-S_t(\bar{v})}\,dx\leq L_4	\int\limits_{a-\B_1t}^{b+\B_1t}\abs{\bar{u}-\bar{v}}\,dx.
	\end{equation}
	
\end{lemma}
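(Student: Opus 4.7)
The plan is to compare the vanishing-viscosity limits by running the $L^1$ stability \eqref{est-2} and the finite propagation bound \eqref{est-4} of Sections~\ref{sec:stability}--\ref{sec:propagation} at the viscous level, against an auxiliary initial datum that coincides with $\bar v$ on a large interval containing $[a,b]$ and with $\bar u$ outside, and then sending $\e\rr 0$.

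Fix $\eta>0$, set $I_\eta:=[a-\B_1 t-\eta,\,b+\B_1 t+\eta]$, and define
$$\bar{w}^\eta(x):=\bar v(x)\,\chi_{I_\eta}(x)+\bar u(x)\,\chi_{\R\setminus I_\eta}(x).$$
Up to shrinking $\de_0$ in the definition of $\mathcal D$, we may assume $\bar{w}^\eta\in\mathcal D$; denote by $u^\e,v^\e,w^{\eta,\e}$ the viscous solutions of \eqref{eqn-thm-1} with initial data $\bar u,\bar v,\bar{w}^\eta$ respectively. Since $\bar{w}^\eta-\bar u$ is supported in $I_\eta$, the stability estimate \eqref{est-2} gives
$$\|w^{\eta,\e}(t)-u^\e(t)\|_{L^1(\R)}\leq L_2\int_{I_\eta}|\bar u-\bar v|\,dx.$$
Since $\bar{w}^\eta\equiv\bar v$ on $I_\eta$, applying the finite propagation estimate \eqref{est-4} to the pair $(w^{\eta,\e},v^\e)$ with reference interval $I_\eta$ yields, after checking that for $x\in[a,b]$ both exponents in \eqref{est-4} are bounded above by $-c_1\eta/\e$,
$$|w^{\eta,\e}(t,x)-v^\e(t,x)|\leq 2\al_1\|\bar u-\bar v\|_{L^\f}e^{-c_1\eta/\e}\qquad\text{for all }x\in[a,b].$$

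Combining the two bounds via the triangle inequality and integrating on $[a,b]$,
$$\int_a^b|u^\e(t,x)-v^\e(t,x)|\,dx\leq L_2\int_{I_\eta}|\bar u-\bar v|\,dx+2(b-a)\al_1\|\bar u-\bar v\|_{L^\f}e^{-c_1\eta/\e}.$$
Extracting a subsequence $\e_k\rr 0$ along which $u^{\e_k}\rr S_t\bar u$ and $v^{\e_k}\rr S_t\bar v$ in $L^1_{loc}$ (Helly's theorem together with the time equicontinuity \eqref{est-3}), the last term drops out in the limit, and sending $\eta\downarrow 0$ afterward produces the desired inequality with $L_4:=L_2$.

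The main obstacle I anticipate is ensuring that $\bar{w}^\eta$ belongs to $\mathcal D$. Gluing $\bar u$ and $\bar v$ at the two endpoints of $I_\eta$ introduces jumps of combined size at most $2\|\bar u-\bar v\|_{L^\f}$, so $TV(\bar w^\eta)$ may reach $TV(\bar u)+TV(\bar v)+2\|\bar u-\bar v\|_{L^\f}$, which could exceed the threshold for which the previous sections established global existence, BV bounds and $L^1$ stability. This is bypassed either by taking $\de_0$ in the definition of $\mathcal D$ sufficiently small so that the enlarged total variation remains admissible, or by smoothing the two jumps over intervals of arbitrarily small width and absorbing the resulting $L^1$ error in the limit $\e\rr 0$.
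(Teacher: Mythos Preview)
Your proof is correct and follows essentially the same approach as the paper: both introduce an auxiliary datum that agrees with one of $\bar u,\bar v$ on an enlarged interval and with the other outside, then combine the global $L^1$ stability \eqref{est-2} with the pointwise propagation bound \eqref{est-4} to split the estimate. The only cosmetic differences are that the paper swaps the roles of $\bar u$ and $\bar v$ in defining $\bar w$, applies the stability directly at the semigroup level, and does not introduce the extra margin $\eta$ (it simply lets $\e\to 0$ in the exponentials $e^{c_1(a-x)/\e}$, $e^{c_1(x-b)/\e}$, which vanish for a.e.\ $x\in[a,b]$); the paper also glosses over the $\bar w\in\mathcal D$ issue you flag, which is handled the same way in both cases by taking $\de_0$ small enough.
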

\begin{proof}
	We first recall that for any $\bar{u},\bar{v}\in\mathcal{D}$ we have 
	\begin{equation*}
		\norm{S_t(\bar{u})-S_t(\bar{v})}_{L^1}\leq L_1\norm{\bar{u}-\bar{v}}_{L^1}\mbox{ for }t>0.
	\end{equation*}
	Note that
	\begin{equation*}
		\norm{S_t(\bar{u})-S_t(\bar{v})}_{L^1}=\sup\limits_{r>0}\int\limits_{-r}^{r}\abs{S_t(\bar{u})-S_t(\bar{v})}\,dx.
	\end{equation*}
	Consider the data $\bar{w}$ defined as 
	\begin{equation*}
		\bar{w}(x)=\left\{
		\begin{array}{rl}
			\bar{u}&\mbox{ for }x\in[a-\B_1t,b+\B_1 t],\\
			\bar{v}&\mbox{ for }x\notin [a-\B_1t,b+\B_1t].
		\end{array}\right.
	\end{equation*}
	Then we have
	\begin{equation}
		\int\limits_{a}^{b}\abs{S_t(\bar{w})-S_t(\bar{v})}\,dx\leq L_1\norm{\bar{w}-\bar{v}}_{L^1}=\int\limits_{a-\B_1t}^{b+\B_1t}\abs{\bar{u}-\bar{v}}\,dx.
	\end{equation}
	To complete the proof it remains to check that $S_t(\bar{u})=S_t(\bar{w})$ on $[a,b]$. To this end we apply \eqref{finite-propagation-2} for $a_1=a-\B_1 t,b_1=b+\B_1t$ to get
	\begin{align*}
		\abs{u^{\e_m}(t,x)-w^{\e_m}(t,x)}&\leq \al_1\norm{\bar{u}(0)-\bar{w}(0)}_{L^\f}\cdot \left(e^{c_1\frac{\B_1t-(x-a_1)}{\e_m}}+e^{c_1\frac{\B_1t+(x-b_1)}{\e_m}}\right)\\
		&\leq \al_1\norm{\bar{u}(0)-\bar{w}(0)}_{L^\f}\cdot \left(e^{c_1\frac{a-x}{\e_m}}+e^{c_1\frac{x-b}{\e_m}}\right).
	\end{align*}
	For $x\in[a,b]$, passing to the limit as $\e\rr0$, we can obtain
	\begin{equation*}
		S_t(\bar{u})=S_t(\bar{w})\mbox{ on }[a,b].
	\end{equation*}
	This completes the proof of Lemma \ref{lemma:fsp}.
\end{proof}
%
%
%
%
%
Finally, we can complete the proof of Theorem \ref{theorem-2}.
\begin{proof}[Proof of Theorem \ref{theorem-2}:]
	We first consider the Riemann data where $u_-,u_+$ both lie on $i$-rarefaction curve. Since the rarefaction curves are straight lines we can write 
	\begin{equation*}
		\bar{u}(x)=u^*+\bar{z}(x)r_i(u^*)\mbox{ where }u^*=u(-\f).
	\end{equation*}
	Consider the flux $F_i$ defined as in \eqref{def:F-i}. Then we note that since the solution $u^\e$ is satisfying \eqref{eqn-parabolic}, we obtain
	\begin{equation}
		z^\e_t+F(z^\e)_x=\e (\mu_i(u^\e)z^\e_x)_x\mbox{ where }u^\e=u^*+z^\e r_i(u^*)\mbox{ and }z(0,x)=\bar{z}(x).
	\end{equation}
	Due to uniform parabolicity, global solution $z^\e$ exists and $z^\e$ converges to entropy solution $z$ of \eqref{eqn:z_i} (see \cite{Kruzkov}). Since the rarefaction curves are straight lines we can write $u^\e=u^*+z^\e(t,x) r_i(u^*)$. Hence, the limit $u$ can be written $u(t,x)=u^*+z(t,x) r_i(u^*)$, in other words, $u^\e$ converges to a solution of \eqref{eqn:hyperbolic}, $u$ defined as in \eqref{soln:Rie}.
	
	We consider an initial data which is perturbation of a Riemann data $\bar{u}_{Rie}=u_-\chi_{(-\f,0)}+u_+\chi_{(0,\f)}$ defined as follows
	\begin{equation}
		\bar{u}(x):=\left\{\begin{array}{rl}
			u_-&\mbox{ if }x<\de,\\
			w_i&\mbox{ if }i\de<x<(i+1)\de,\mbox{ with }1\leq i\leq n-1,\\
			u_+&\mbox{ if }x>n\de.
		\end{array}\right.
	\end{equation}
	Due to finite speed of propagation, up to a small time $t_0$, the waves do not interact with each other and the solution to \eqref{eqn:hyperbolic} can be written as 
	\begin{equation}
		u^\de(t,x)=\mathcal{R}_i(z_i(t,x-i\de);w_{i-1})\mbox{ for }x\in[i\de+\hat{\la}t,(i+1)\de-\hat{\la}t]\mbox{ when }t\in[0,t_0].
	\end{equation}
	Let $u^{\e,\de}$ be the solution to \eqref{eqn-main} corresponding to the initial data $\bar{u}^\de$. By our previous analysis and $L^1_{loc}$-dependence \eqref{L1-loc-stability-1}, we obtain that $u^{\e,\de}$ converges to $u^\de$. Due to strict hyperbolicity, we note that the waves of $u^\de$ has disjoint support of length at least $ct_0$. By repeat the previous argument, we can obtain that $u^{\e,\de}$ converges to $u^\de$ for all $t>0$. Now, by sending $\de\rr0$, we can obtain that the viscosity solution $u^\e$ converges to solution of \eqref{eqn:hyperbolic} for Riemann data. Since a Lipschitz continuous semigroup is determined by the local in time behavior for piecewise constant data, this characterizes the limit function $u$. Moreover, it also says that for any subsequence $\e_k\rr0$, $u^{\e_k}$ converges to the same limit. This completes the proof of Theorem \ref{theorem-2}.  
\end{proof}

\end{document}